\tikzstyle{block} = [rectangle, draw, fill=white!50,
\tikzstyle{line} = [draw, -latex']
\tikzstyle{arrow} = [thick,->,>=stealth]
\DeclareMathOperator*{\argmax}{arg\,max}
\numberwithin{equation}{section}
\newtheorem{theorem}{Theorem}[section]
\newtheorem{definition}{Definition}[section]
\newtheorem{proposition}{Proposition}[section]
\newtheorem{lemma}{Lemma}[section]
\newtheorem{corollary}{Corollary}[section]
\newtheorem{remark}{Remark}[section]
\newtheorem{example}{Example}[section]
\begin{document}

\title{Calm local optimality  for nonconvex-nonconcave minimax problems
\thanks{The research of Ye is supported by NSERC. Zhang's work is supported by National Science Foundation of China (No.12222106), Guangdong Basic and Applied Basic Research Foundation (No. 2022B1515020082) and Shenzhen Science and Technology Program (No. RCYX20200714114700072). The alphabetical order of the authors indicates the equal contribution to the paper.}}

\author{Xiaoxiao Ma\thanks{Department of Mathematics and Statistics, University of Victoria, Victoria, B.C., Canada V8W 2Y2. Email: xiaoxiaoma@uvic.ca.}
\and
Wei Yao\thanks{Department of Mathematics and National Center for Applied Mathematics Shenzhen, Southern University of Science and Technology, Shenzhen, Guangdong, China. Email: yaow@sustech.edu.cn.}
\and
Jane J. Ye\thanks{Department of Mathematics and Statistics, University of Victoria, Victoria, B.C., Canada V8W 2Y2. Email: janeye@uvic.ca.}
\and
Jin Zhang\thanks{Corresponding author. Department of Mathematics and SUSTech International Center for Mathematics and National Center for Applied Mathematics Shenzhen, Southern University of Science and Technology, and Peng Cheng Laboratory, Shenzhen, Guangdong, China. Email: zhangj9@sustech.edu.cn.}}
\date{}

\maketitle

\begin{abstract}
\noindent

Nonconvex-nonconcave minimax problems have found numerous applications in various  fields including machine learning. However,  questions remain about what is a good surrogate for local minimax optimum and how to characterize the minimax optimality.  Recently Jin, Netrapalli, and Jordan (ICML 2020) introduced a concept of local minimax point and derived optimality conditions for the smooth and unconstrained case. In this paper, we introduce the concept of calm local minimax point, which is a local minimax point with a calm radius function.  With the extra calmness property we obtain first and second-order sufficient and necessary optimality conditions for a very general class of nonsmooth nonconvex-nonconcave minimax problem. Moreover we show that the calm local minimax optimality and the local minimax optimality coincide under a weak sufficient optimality condition for the maximization problem. This equivalence allows us to derive stronger optimality conditions under weaker assumptions for local minimax optimality.

\noindent {\bf Key words:}\hspace{2mm} minimax problem, local optimality, calmness,  first-order optimality condition, second-order optimality condition \vspace{3mm}

\noindent {\bf AMS Subject Classifications:}\hspace{2mm} 90C26, 90C30, 90C31, 90C33, 90C46, 49J52, 91A65

\end{abstract}


\section{Introduction}
In this paper, we consider the following minimax problem
  \begin{equation}\label{minimax}\tag{Min-Max}
  \min_{x\in X}\max_{y\in  Y} f(x,y),
  \end{equation}
  where the objective function  $f:\mathbb{R}^n\times\mathbb{R}^m\rightarrow\mathbb{R}$ is possibly a nonsmooth function, the nonempty sets $X \subseteq\mathbb{R}^n$, $ Y\subseteq\mathbb{R}^m$ are closed but may be nonconvex.
Throughout the paper, we assume that for each $x\in X$, the inner maximization problem $\max _{y^{\prime} \in  Y} f(x, y^{\prime})$ has an optimal solution.

There are two ways to understand the minimax problem. One way is nonsequential/simultaneous and leads to the concept of a Nash equilibrium. That is,  given $x$, the function $f(x,y')$ is maximized over $y'\in Y$ while given $y$, the function $f(x',y)$ is minimized over $x'\in X$. Another way is sequential
 and leads to the concept of a Stackelberg equilibrium. That is,  for any given $x$, the function $f(x,y')$ is maximized over $y'\in Y$  and the inner maximum value $\max_{y\in Y}f(x',y)$ is minimized over $x'\in X$. In a convex-concave case which means that  $f(x,y)$ is convex in $x$ and concave in $y$ and the sets $X,Y$ are convex, by the celebrated minimax theorem \cite{Sion1958}, the order between the minimization and maximization can be switched and these two concepts are equivalent in the sense that the resulting optimal value of the minimax problems for the two concepts are the same. However for the  nonconvex-nonconcave minimax problem where the objective $f(x,y)$ may be  nonconvex in $x$ and nonconcave in $y$ and $X,Y$ may not be convex, the order between the minimization and maximization is crucial, these two concepts may be different.

Theoretical investigations and numerical algorithms for convex-concave minimax problems have been extensively studied \cite{Demyanov1974,LinJin2020-2,Morgenstern1953,Myerson2013,Nemirovsky1983,Sion1958,ZhangHong2022}.
Nevertheless, recent advances in machine learning, such as generative adversarial networks \cite{Goodfellow2014}, adversarial training \cite{Madry2017}, and reinforcement learning \cite{Omidshafiei2017}, have  introduced new challenges for studying nonconvex-nonconcave minimax optimization. In recent years, there are more and more works for nonconvex-nonconcave minimax problems (e.g., \cite{JNJ20,Li2021,LinJin2020,Luo2022,Thekumparampil2019}).

In most applications of minimax problems, in particular those from the machine learning, the sequential/Stackelberg concept of minimax is required. Denote the value function of the maximization problem as $V(x):=\max_{y \in  Y} f(x,y).$ Then the minimax problem is equivalent to minimizing $V(x)$ over $x\in X$, see e.g. \cite{GuoYe2023} . But the value function is global which means one has to solve a nonconcave maximization problem globally. In practice for solving a nonconvex/nonconcave minimization/maximization problem, many algorithms aim to find stationary points or local optimal solutions as local surrogates for optimal solutions.
What is an appropriate definition for local surrogates of  the sequential/Stackelberg concept of the minimax problem? To answer this question, we should consider some concepts where both $x$ and $y$ are optimized locally. Since we consider problem \eqref{minimax} as a sequential game, i.e., the second player (the one selects $y$) can observe the action taken by the first player (the one selects $x$) and adjust her action accordingly, the radii of the local neighborhoods where maximization or minimization takes place can be different. Based on these considerations, Jin et al. \cite{JNJ20} introduced the concept of local minimax points which is equivalent to saying that a point  $(\bar x,\bar y)$ is a local minimax point if  there exists a radius function $\tau: \mathbb{R}_{+} \rightarrow \mathbb{R}_{+}$ satisfying $\tau(\delta) \rightarrow 0$ as $\delta \downarrow 0$ such that $\bar y$ is a  maximum point of $f\left(\bar x , \cdot\right)$  on $ Y\cap \mathbb{B}_{\delta}(\bar y)$, and $\bar x$ is a  minimum point of the localized optimal value function $V_{\tau(\delta)}(x):=\max_{\{y| \|y-\bar y\|\leq \tau(\delta)\}} f(x,y)$ on $ X\cap \mathbb{B}_{\delta}(\bar x)$, for any small enough $\delta$.
They demonstrated that under mild conditions, all stable limit points of gradient descent ascent (GDA) are exactly local minimax points up to some degenerate points.  Recently, extensions and optimality conditions for local minimax points  for problem \eqref{minimax} has been studied in \cite{DaiZh-2020, JC22, JNJ20,Zhang2022}. Jin et al. \cite{JNJ20} focus on optimality conditions  for the smooth unconstrained case, i.e., $f$ is twice continuously differentiable, $X=\mathbb{R}^n$ and $Y=\mathbb{R}^m$.  Dai and Zhang \cite{DaiZh-2020} study optimality conditions  for the smooth minimax problems with equality and inequality constraints, i.e., $f$ is twice continuously differentiable, $X$ and $Y$ are described by equality and inequality constraints.   Jiang and Chen \cite{JC22} derive optimality conditions for the nonsmooth and constrained case where $X$ and $Y$ are required to be convex. Zhang et al. \cite{Zhang2022} study the smooth constrained case where $X$ and $Y$ are  sometimes assumed to be convex or even the whole space based on the local optimal value function $V_{\epsilon}(x):=\max_{\{y| \|y-\bar y\|\leq \epsilon\}} f(x,y)$.

\subsection{Motivating example}

%
In fair classification \cite{Nouiehed2019}, the objective is to minimize the maximum loss over multiple categories. An example formulation is the problem
$$
\min _{x \in \mathbb{R}^n} \max _{i \in\{1, \ldots, m\}} \ell_i(x),
$$
where $\ell_i(x)$ represents the loss on category $i$ with $x$ denoting neural network parameters. A reformulation \cite{Nouiehed2019} of this problem where $Y$ is a simplex in $\mathbb{R}^m$ is given by the zero-sum game
\begin{equation*}
	\min _{x \in \mathbb{R}^n} \max _{y \in Y} \sum_{i=1}^m y_i \ell_i(x).
\end{equation*}

We observe a subclass of local minimax points in the fair classification problem, whose radius function $\tau$ in Definition \ref{unJNJ} has good properties.

\begin{example}
Consider the fair classification problem
$$
\min _{x \in \mathbb{R}} \max _{y \in [0,1]} \, f(x,y):=y\ell_1(x)+(1-y)\ell_2(x),
$$
where $\ell_1(x)=-x^3+x$ and $\ell_2(x)=-x^3$. Then $(0,0)$ is not a local Nash equilibrium (which means that $\bar x=0$ is not a local minimizer of $f(x, 0)=-x^3$ and $\bar y=0$ is not a local minimizer of $f(0, y)$ over $y\in [0,1]$), since $x=0$ is not a local minimizer of $f(x, 0)=-x^3$. On the other hand, for $0<\delta \leq 1$, we have
$$
\max _{y^{\prime} \in[0, \delta]} f(x, y^{\prime})=\left\{\begin{aligned}
-x^3+\delta x, & \text { if } x \geq 0, \\
-x^3, & \text { if } x<0 .
\end{aligned}\right.
$$
Thus, the point $(0,0)$ is a local minimax point with $\tau(\delta):=\delta$ since for any $0<\delta \leq 1$ and $|x| \leq \delta \leq \sqrt{\delta}$,
$$
f(0, y) \leq f(0,0) \leq \max _{y^{\prime} \in[0, \delta]} f(x, y^{\prime}).
$$
\end{example}

Actually, we find that in many minimax examples presented in the literature, see e.g., \cite[Figure 1]{JNJ20}, \cite[Examples 3.21 and A.1]{JC22}, the radius function $\tau$ is calm at $0$. Moreover, the strict local minimax point defined in \cite[Proposition 20]{JNJ20} and the differential Stackelberg equilibrium defined in \cite[Definition 4]{FCR2020}  also turn out to be local minimax points with a calm radius function. Recall that a function $\phi(x):\mathbb{R}^n\rightarrow \mathbb{R}$ is said to be calm at $\bar x$ if there is $\kappa>0$ and $U$, a neighborhood of $\bar x$,  such that $|\phi(x) -\phi(\bar x)|\leq \kappa \|x-\bar x\|$ for all $x\in U$.

Motivating by the above observations, in this paper, we introduce a new notion of local minimax point, which requires the radius function $\tau$ in the definition of a local minimax point  to be calm at $0$. Note that this is equivalent to  imposing a bound on the ratio of the radii of the local neighborhoods where the  maximization and minimization takes place respectively.   It turns out that this extra calmness property is essential for us to develop optimality conditions for minimax problems.

\subsection{Contributions}
We summarize our main contributions as follows.
\begin{itemize}
\item[(i)] We introduce the concept of the calm local minimax point,  and analyze their relationship with various types of minimax points. In particular we give a general sufficient condition under which the calm local minimax optimality and the local minimax optimality coincide.
The concept of calm local minimax points alleviates the issue of non-existence in local Nash equilibria and properly reflects the order of the minimization and maximization problems.
 Furthermore, since the calm local minimax optimality and the local minimax optimality coincide under the  weak sufficient optimality condition on the maximization problem, our optimality conditions based on the new concept give a more precise and comprehensive characterization of local minimax points than the existing ones.

\item[(ii)] We derive first-order optimality conditions in primal and dual forms for nonsmooth constrained minimax problems, as well as the second-order sufficient and necessary optimality conditions. When reducing the problem to some special cases, e.g., when $f$ is smooth, $X$ and $Y$ are set-constrained systems, we give explicit form for the optimality conditions.  {Our findings effectively capture the nested structure of minimax problems and offer explicit optimality conditions derived for the initial problem data. We have demonstrated that in the case of a smooth problem with some easily satisfied properties of $X$ and $Y$ and the fulfillment of a weak sufficient condition for local optimality in the maximization problem, our necessary conditions can be sharper, and our sufficient condition requires  weaker  assumptions compared to the existing results.} In particular, for the smooth and unconstrained minimax problem, our second-order optimality condition   recovers the one derived by Jin et al. \cite{JNJ20} in the case where $\nabla_{yy}^2 f(\bar x,\bar y)$ is negative definite and is sharper otherwise; for the smooth and constrained case with equality and inequality constraints, our second-order optimality condition is sharper and our assumptions are weaker than the one in Dai and Zhang \cite{DaiZh-2020}.  Unlike those in Zhang et al. \cite{Zhang2022}, all of our optimality conditions are based on the point $(\bar x,\bar y)$ of concern and not requiring any other information such as the local optimal solutions of the  maximization problem on $Y$.
Since we do not require the convexity of the constraint sets $X$ and $Y$, our optimality conditions apply to more general problems than those in Jiang and Chen \cite{JC22}. By using an example, we show that  even in the smooth and unconstrained case, our optimality condition can be used to rule out the possibility of a non-local minimax point while the one in Jiang and Chen \cite{JC22} can not.
\end{itemize}

The remainder of this paper is organized as follows. In Section \ref{prelim}, we introduce the necessary notation and background, including techniques in variational analysis and obtain some preliminary results on optimality conditions for nonsmooth optimization problems. In Section \ref{section-points}, we discuss
different types of minimax points and introduce a new notion for local optimality in the minimax problem -- calm local minimax point. In Section \ref{nonsmooth-section}, we give our new optimality conditions for the general case. In Section \ref{section-compare}, we focus on some special cases and compare our results with existing results in the literature.

\section{Notations and preliminary results}\label{prelim}

In this section, we will first introduce some preliminary materials and results that will be instrumental in deriving optimality conditions for the minimax problem.

{\bf Notations:} We denote by $\mathbb{R}_+^r$ the nonnegative orthant and $\mathbb{\overline R}=[-\infty,\infty]$ the extended real line. For any $z \in \mathbb{R}^r$, $\|z\|$ denotes its Euclidean norm. For $z \in \mathbb{R}^r$ and $\epsilon>0$, we denote by $\mathbb{B}_\epsilon(z):=\{z' \,| \,\|z'-z\|\leq \epsilon\}$ the closed ball centered at $z$ with radius $\epsilon$ and by $\mathbb{B}$ the closed unit ball. For any two vectors $a, b$ in $\mathbb{R}^r$, we denote by $\langle a, b\rangle$ the inner product. For any $z \in \mathbb{R}^r$ and $S \subseteq \mathbb{R}^r$, $\mathrm{dist}(z,S):=\inf_{z'\in S}\|z-z'\|$. For a set $S \subseteq \mathbb{R}^r$, $\delta_S: \mathbb{R}^r \to \mathbb{\overline R}$ denotes the indicator function and $S^{\perp}:=\{\alpha \in \mathbb{R}^r | \langle \alpha, z\rangle=0, \forall z \in S\}$ denotes the orthogonal complement. We denote by $\operatorname{cl} S, \operatorname{co} S$ the closure  and  the convex hull of $S$, and by $\sigma_{S}(\bar z):=\sup_{s\in S} \langle \bar z, s\rangle $ the support function of
$S$ at $\bar z$, respectively. For a set  $S\subseteq \mathbb{R}^r$, a point $\bar z \in \mathbb{R}^r$, and a sequence $z_k$, the notation $z_k \stackrel{S}{\rightarrow} \bar{z}$ means that the sequence $z_k\in S$ goes to $\bar z$. $l(t) = o(t)$ means $l(t)/t \to 0$ as $ t \downarrow 0$. For a set-valued mapping $\Gamma: \mathbb{R}^n \rightrightarrows \mathbb{R}^m$, ${\rm dom\ }\Gamma:=\{x \in \mathbb{R}^n | \Gamma(x) \not= \emptyset\}$ denotes the domain of $\Gamma$, ${\rm gph\ }\Gamma:=\{(x,y) \in \mathbb{R}^n \times \mathbb{R}^m | x \in \mathbb{R}^n, y\in \Gamma(x)\}$ denotes the graph of $\Gamma$.  For a single-valued map $\Phi: \mathbb{R}^r \rightarrow \mathbb{R}$, we denote by $\nabla \Phi(z) \in \mathbb{R}^{r}$ and $\nabla^2 \Phi(z) \in \mathbb{R}^{r \times r}$ the gradient vector of $\Phi$ at $z$ and the Hessian matrix of $\Phi$ at $z$, respectively. For a matrix $A \in \mathbb{R}^{n \times m}, A^T$ is its transpose. For a symmetric matrix $A \in \mathbb{R}^{r \times r},$ $A\prec 0\ (\succeq 0)$ means that the matrix $A$ is a negative definite (positive semidefinite) matrix and $A^{-1}$ is the inverse matrix.

\subsection{Variational analysis}
\begin{definition}[tangent and normal cones \cite{BonSh00,RoWe98}]
Given  $S\subseteq\mathbb{R}^r, \bar z\in S$, the tangent/contingent cone to $S$ at $\bar z$ is defined by \begin{eqnarray*}
 T_S(\bar z)&:=&
  \big\{w \in\mathbb{R}^r \, \big| \, \exists \ t_k\downarrow 0,\; w_k\to w \ \ {\rm with}
 \ \ \bar z+t_k w_k\in S \big\}.
 \end{eqnarray*}
For $w\in T_S(\bar z)$,
the outer second-order tangent set to $S$ at $\bar z$ in direction $w$ is defined by
 \begin{eqnarray*}
 T_S^2(\bar z; w)
 := \left \{ \nu \in \mathbb{R}^r \, \Big| \, \exists \ t_k \downarrow 0, \nu_k \rightarrow \nu \ \ {\rm with}
 \ \  \bar z+t_kw+\frac{1}{2}t^2_k \nu_k \in S
  \right \}.
 \end{eqnarray*}
 The regular/Fr\'echet normal cone, the proximal normal cone, and the limiting/Mordukhovich normal cone to $S$ at $\bar z$ are given, respectively, by
 \begin{eqnarray*}
 \widehat{N}_S(\bar z)&:=&\left\{z^*\in \mathbb{R}^r \, \big| \, \langle z^*, z-\bar z\rangle \le
 o\big(\|z-\bar z\|\big) \ \forall z\in S\right\},\\
{N}^p_S(\bar z)&:=&\left\{z^* \in \mathbb{R}^r \, \big|  \exists \ \gamma>0:   \langle z^*,z-\bar z\rangle\leq \gamma \|z-\bar z\|^2 \ \forall z\in S \right\},\\
 N_S(\bar z)&:=&
 \left \{z^* \in \mathbb{R}^r \, \Big| \, \exists \ z_k \overset{S}{\to} \bar z, z^*_{k}\rightarrow z^* \ {\rm with }\  z^*_{k}\in \widehat{N}_{S}(z_k) \right \}.
\end{eqnarray*}
\end{definition}

The regular normal cone to $S$ at $\bar{z}$ \cite[Proposition 6.5]{RoWe98} can also be characterized by
\begin{equation}\label{normalcone-equ}
\widehat{N}_{S}(\bar{z}):=\left\{z^* \in \mathbb{R}^{r} \mid \langle z^*, w\rangle \leq 0 \quad \forall w\in T_S(\bar z)\right\}.
\end{equation}
For a closed set $S$, one always has $N_{S}^{p}(\bar{z}) \subseteq \widehat{N}_{S}(\bar{z}) \subseteq {N}_{S}(\bar{z})$, where all the cones agree and reduce to the normal cone of convex analysis if $S$ is convex.

\begin{definition}[subderivatives, superderivatives and semiderivatives; Clarke generalized directional derivative and Clarke generalized gradient \text{\cite[Definitions 8.1 and 7.20]{RoWe98}}, \text{\cite[page 10]{Cla-1990}}]
Consider a function $\psi: \mathbb{R}^r \rightarrow \overline{\mathbb{R}}$, a point $\bar z$ with $\psi(\bar z)$ finite, and $w \in  \mathbb{R}^r$.  The subderivative and the superderivative of  $\psi$ at $\bar z$ for  $w$ is defined by
\begin{eqnarray*} {\rm d}\psi(\bar z)(w)&:=& \liminf\limits_{{t\downarrow 0} \atop {w'\to w}} \frac{\psi(\bar{z}+tw')-\psi(\bar{z})}{t},\\
{\rm d}^+\psi (\bar z)(w) &:=& \limsup\limits_{{t\downarrow 0} \atop {w'\to w}} \frac{\psi(\bar z+tw')-\psi(\bar z)}{t},
\end{eqnarray*}
respectively.
When  the limit
$${\rm d}\psi(\bar z)(w)={\rm d}^+\psi (\bar z)(w)=\lim_{{t\downarrow 0} \atop {w'\to w}} \frac{\psi(\bar z+tw')-\psi(\bar z)}{t}$$  exists, we say $\psi$ is semidifferentiable at $\bar z$ for $w$ (or Hadamard differentiable at $\bar z$ in direction $w$). Further if $\psi$ is semidifferentiable at $\bar z$ for every $w$, we say that $\psi$ is semidifferentiable at $\bar z$. It is easy to see that if $\psi$ is semidifferentiable at $\bar z$ for $w$, then
\begin{equation}\label{eqn2.2new}{\rm d}(-\psi)(\bar z)(w)=-{\rm d}\psi(\bar z)(w).\end{equation}
When $\psi$ is Lipschitz continuous, the Clarke generalized directional derivative of $\psi$ at $\bar z$ along the direction $w$ is defined as
$$\psi^\circ(\bar z;w):=\limsup\limits_{{t\downarrow 0} \atop {z'\to \bar z}} \frac{\psi(z'+tw)-\psi(z')}{t}.$$
Moreover, the Clarke generalized gradient is defined as
$$\partial^\circ \psi(\bar z):=\{\xi \in \mathbb{R}^r| \langle \xi, w\rangle \leq \psi^\circ(\bar z;w) \quad \forall w\in \mathbb{R}^r\}.$$
 \end{definition}
  We know from \cite[page 10]{Cla-1990} that
\begin{equation}\label{psicirc}
\psi^\circ(\bar z;w)=\max_{\xi \in \partial^\circ \psi(\bar z)}\langle \xi, w\rangle.
\end{equation}

By \cite[Theorem 7.21]{RoWe98},  if $\psi$  is semidifferentiable at $\bar z$, then ${\rm d} \psi(\bar z)(w)$ is finite for any $w \in \mathbb{R}^r$, $\psi$ is continuous at $\bar z$ and $w\rightarrow {\rm d}\psi(\bar z)(w)$ is homogeneous and continuous. When $\psi$  is semidifferentiable at $\bar z$ for $w$, $\psi$ is also directionally differentiable at $\bar z$ in direction $w$ and the subderivative/superderivative coincides with the classical directional derivative at $\bar z$ in direction $w$. That is,
\begin{equation}{\rm d}\psi(\bar z)(w)={\rm d}^+\psi (\bar z)(w)=\psi'(\bar z; w):=\lim _{t \downarrow 0} \frac{\psi(\bar z+t w)-\psi(\bar z)}{t}.\label{eqn2.2}\end{equation}
 Moreover when $\psi$ is Lipschitz around $\bar z$, the directional differentiability is equivalent to the semidifferentiability.
Hence when $\psi$ is Lipschitz and semidifferentiable around $\bar z$, we have
 $$ \psi^\circ(\bar z;w):=\limsup\limits_{{t\downarrow 0} \atop {z\to \bar z}} \frac{\psi(z+tw)-\psi(z)}{t} \geq \limsup\limits_{{t\downarrow 0} } \frac{\psi(\bar z+tw)-\psi(\bar z)}{t}= {\rm d}\psi(\bar z)(w).$$
 If $\psi$ is continuously differentiable at $\bar z$, it is semidifferentiable at $\bar z$ and for any $w$, one has ${\rm d} \psi(\bar z)(w)=\psi'(\bar z; w)= \nabla \psi (\bar z)^Tw$ \cite[Exercise 8.20]{RoWe98}.

\begin{definition} [second subderivatives, twice semiderivatives, and twice epi-derivatives \text{\cite[Defintions 13.3 and 13.6]{RoWe98}}] Let $\psi: \mathbb{R}^r \rightarrow \overline{\mathbb{R}}$, $\psi(\bar z)$ be finite and $\bar v, w\in  \mathbb{R}^r$. The second subderivative  of $\psi$ at $\bar z$ for $\bar v$  and $w$ is
 \begin{eqnarray*}
 {\rm d}^2\psi(\bar z;\bar v)(w):= \liminf\limits_{{t\downarrow 0} \atop {w'\to w}} \frac{\psi(\bar{z}+tw')-\psi(\bar{z})-t\langle \bar{v}, w'\rangle}{\frac{1}{2}t^2}.
\end{eqnarray*}
On the other hand, the second subderivative  of $\psi$ at $\bar z$ for $w$ (without mention of $\bar v$) is defined by
 \begin{eqnarray}\label{secondsub}
 {\rm d}^2\psi(\bar z)(w):= \liminf\limits_{{t\downarrow 0} \atop {w'\to w}} \frac{\psi(\bar{z}+tw')-\psi(\bar{z})-t {\rm d} \psi(\bar z)(w')}{\frac{1}{2}t^2},
\end{eqnarray}
where the sum of $\infty$ and $-\infty$ is interpreted as $\infty$.
The function $\psi$ is twice semidifferentiable at $\bar z$ if it is semidifferentiable at $\bar z$ and  the `liminf'' in  (\ref{secondsub}) is replaced by the ``lim''
 for any $w \in  \mathbb{R}^r$.
The function $\psi$ is said to be twice epi-differentiable at $\bar z$ for $\bar v$ if for any $w \in \mathbb{R}^r$ and any sequence $t_k\downarrow 0$ there exists a sequence $w_k\rightarrow w$ such that
\begin{equation}\label{Defn2.6}
{\rm d}^2\psi(\bar z;\bar v)(w) = \lim_{k\rightarrow \infty}  \frac{\psi(\bar{z}+t_kw_k)-\psi(\bar{z})-t_k\langle \bar{v}, w_k\rangle}{\frac{1}{2}t_k^2}.
\end{equation}
 \end{definition}
When $\psi$  is twice semidifferentiable at $\bar z$ for $w$, $\psi$ is also twice directionally differentiable at $\bar z$ in direction $w$ and the second subderivative coincides with the classical second directional derivative at $\bar z$ in direction $w$. That is,
 \begin{equation}{\rm d}^2\psi(\bar z)(w)=\psi''(\bar z; w):=\lim _{t \downarrow 0} \frac{\psi(\bar z+t w)-\psi(\bar z) -t \psi'( \bar z;w)}{\frac{1}{2} t^2}. \label{eqn2.5new}\end{equation} It is easy to see that if $\psi$ is twice semidifferentiable at $\bar z$, then
 \begin{equation}\label{twice-epi}
{\rm d}^2 (-\psi)(\bar z)(w)=-{\rm d}^2 \psi(\bar z)(w).
 \end{equation}
  With twice epi-differentiability, the function $\psi$ is properly twice epi-differentiable at $\bar z$ for $\bar v$ if in addition the function ${\rm d}^2\psi(\bar z;\bar v)$ is proper \cite[Definition 13.6]{RoWe98}. By \cite[Exercise 13.7]{RoWe98}, if $\psi$ is twice semidifferentiable at $\bar z$, then ${\rm d}^2 \psi(\bar z)(w)$ is finite for any $w \in \mathbb{R}^r$.
If $\psi$ is twice continuously differentiable at $\bar z$, then it is twice semidifferentiable at $\bar z$ and for $\bar v=\nabla \psi(\bar z)$, and one has ${\rm d}^2\psi(\bar z;\bar v)(w)={\rm d}^2\psi(\bar z)(w)=w^T \nabla^2\psi (\bar z)w$ \cite[Example 13.8]{RoWe98}.

For a function   $\psi(x,y):\mathbb{R}^n \times \mathbb{R}^m \to \overline{\mathbb{R}}$ and $w=(u,h)\in \mathbb{R}^n \times \mathbb{R}^m$, we denote the subderivative of $\psi$ at $(\bar x,\bar y)$  with respect to $x$ for $u$ and $y$ for $h$ by ${\rm d}_x\psi(\bar x,\bar y)(u)$ and  ${\rm d}_y\psi(\bar x,\bar y)(h)$, respectively. Similarly we denote  the second subderivative of $\psi$ at $(\bar x,\bar y)$  with respect to $x$ for $u$ and $y$ for $h$ by ${\rm d}^2_{xx}\psi(\bar x,\bar y)(u)$ and  ${\rm d}^2_{yy}\psi(\bar x,\bar y)(h)$, respectively. When $\psi$ is twice semidifferentiable, it is also semidifferentiable and by (\ref{eqn2.2}) we have
\begin{equation*} {\rm d} \psi(\bar x,\bar y)(0,h) =\lim _{t \downarrow 0} \frac{\psi(\bar x, \bar y+t h)-\psi(\bar x,\bar y)}{t}={\rm d}_y \psi(\bar x,\bar y)(h)\end{equation*} and by (\ref{eqn2.5new})
\begin{equation}\label{2nd-cal}
\begin{aligned}
{\rm d}^2 \psi(\bar x,\bar y)(0,h)
& = \lim\limits_{t\downarrow 0} \frac{\psi(\bar{x},\bar{y}+th)-\psi(\bar{x},\bar y)-t {\rm d} \psi(\bar x,\bar y)(0,h)}{\frac{1}{2}t^2} \\
 & = \lim\limits_{t\downarrow 0} \frac{\psi(\bar{x},\bar{y}+th)-\psi(\bar{x},\bar y)-t {\rm d}_y \psi(\bar x,\bar y)(h)}{\frac{1}{2}t^2}  \\
& = {\rm d}^2_{yy} \psi(\bar x,\bar y)(h).
\end{aligned}
\end{equation}
\begin{definition} Let $\psi(x,y):\mathbb{R}^n \times \mathbb{R}^m \to {\mathbb{R}}$. We say that the separation property holds for the subderivative of $\psi$ at $(\bar x,\bar y)$ if for all $(u,h)\in \mathbb{R}^n \times \mathbb{R}^m$,
\begin{equation}\label{d=dx+dy}
{\rm d} \psi(\bar x,\bar y)(u,h)={\rm d}_x \psi(\bar x,\bar y)(u)+{\rm d}_y \psi(\bar x,\bar y)(h) .
\end{equation}
We say that the separation property holds for the second subderivative of $\psi$ at $(\bar x,\bar y)$ if the separation property holds for the subderivative of $\psi$ at $(\bar x,\bar y)$ and for all $(u,h)\in \mathbb{R}^n \times \mathbb{R}^m$,
\begin{equation}\label{2ndseparation}
{\rm d}^2 \psi(\bar x,\bar y)(u,h)=2{\rm d}_{xy}^2 \psi(\bar x,\bar y)(u,h)+{\rm d}^2_{xx} \psi(\bar x,\bar y)(u)+{\rm d}^2_{yy} \psi(\bar x,\bar y)(h) ,
\end{equation}
where  $${\rm d}_{xy}^2 \psi(\bar x,\bar y)(u,h):= \liminf\limits_{{t\downarrow 0} \atop {u'\to u, h'\to h}} \frac{{\rm d}_y\psi(\bar{x}+tu',\bar{y})(h')-{\rm d}_y\psi(\bar{x},\bar{y})(h')}{t}.$$
\end{definition}

{The separation property will be useful in deriving the optimality conditions. Next, we give a sufficient condition for the separation property.}

\begin{proposition}\label{separation}Suppose that $\psi$ is  semidifferentiable around $(\bar x,\bar y)$. Moreover suppose that either  ${\rm d}_y \psi(x,\bar y)(h)$ is a continuous function of $x$ around $\bar x$ or ${\rm d}_x \psi(\bar x,y)(u)$ is a continuous function of $y$ around $\bar y$. Then the separation property holds for the subderivative of $\psi$ at $(\bar x,\bar y)$.
\end{proposition}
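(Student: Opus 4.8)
The plan is to reduce the separation identity \eqref{d=dx+dy} to the convergence of one-dimensional difference quotients and then to close the remaining gap by a mean value argument. Since $\psi$ is semidifferentiable around $(\bar x,\bar y)$, it is continuous and directionally differentiable there and at all nearby points, so by \eqref{eqn2.2} every subderivative appearing in \eqref{d=dx+dy} is an ordinary two-sided directional derivative; in particular
\[
{\rm d}\psi(\bar x,\bar y)(u,h)=\lim_{t\downarrow 0}\frac{\psi(\bar x+tu,\bar y+th)-\psi(\bar x,\bar y)}{t},
\]
and the analogous radial formulas hold for ${\rm d}_x\psi(\bar x,\bar y)(u)$, ${\rm d}_y\psi(\bar x,\bar y)(h)$, as well as for the partial subderivatives of $\psi$ at points near $(\bar x,\bar y)$.

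Fix $(u,h)$. Inserting the intermediate point $(\bar x,\bar y+th)$ gives
\[
\frac{\psi(\bar x+tu,\bar y+th)-\psi(\bar x,\bar y)}{t}=\underbrace{\frac{\psi(\bar x+tu,\bar y+th)-\psi(\bar x,\bar y+th)}{t}}_{A(t)}+\underbrace{\frac{\psi(\bar x,\bar y+th)-\psi(\bar x,\bar y)}{t}}_{B(t)} .
\]
By the first paragraph $B(t)\to{\rm d}_y\psi(\bar x,\bar y)(h)$, so \eqref{d=dx+dy} will follow once we prove $A(t)\to{\rm d}_x\psi(\bar x,\bar y)(u)$; this is the argument I would run when ${\rm d}_x\psi(\bar x,y)(u)$ is continuous in $y$. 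In the other case one inserts $(\bar x+tu,\bar y)$ instead and is left with the symmetric $y$-difference quotient, to be shown to converge to ${\rm d}_y\psi(\bar x,\bar y)(h)$ by the mirror-image argument using continuity of ${\rm d}_y\psi(x,\bar y)(h)$ in $x$. So it suffices to treat $A(t)$.

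For $A(t)$ I would apply the mean value inequality for continuous functions with a right derivative to $\theta_t(s):=\psi(\bar x+su,\bar y+th)$ on $[0,t]$. The map $\theta_t$ is continuous (semidifferentiability implies continuity), and for $s\in[0,t)$ it has right derivative equal to the directional derivative of $x\mapsto\psi(x,\bar y+th)$ at $\bar x+su$ along $u$, i.e. ${\rm d}_x\psi(\bar x+su,\bar y+th)(u)$; hence, dividing the mean value estimate by $t$,
\[
\inf_{s\in[0,t)}{\rm d}_x\psi(\bar x+su,\bar y+th)(u)\;\le\; A(t)\;\le\;\sup_{s\in[0,t)}{\rm d}_x\psi(\bar x+su,\bar y+th)(u).
\]
As $t\downarrow 0$ all the arguments $(\bar x+su,\bar y+th)$, $s\in[0,t)$, converge to $(\bar x,\bar y)$ (uniformly in $s$, since $\|su\|\le t\|u\|$), so by continuity of the partial subderivative $(x,y)\mapsto{\rm d}_x\psi(x,y)(u)$ near $(\bar x,\bar y)$ the infimum and the supremum both tend to ${\rm d}_x\psi(\bar x,\bar y)(u)$, whence $A(t)\to{\rm d}_x\psi(\bar x,\bar y)(u)$. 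Together with $B(t)\to{\rm d}_y\psi(\bar x,\bar y)(h)$ this yields \eqref{d=dx+dy}.

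I expect the last step to be the main obstacle: turning pointwise behaviour of the partial subderivative into genuine control of the mean value bound. The subtlety is that along the segment $[0,t]$ the first argument of ${\rm d}_x\psi(\cdot,\bar y+th)(u)$ drifts off $\bar x$ by $O(t)$ (and the second argument is shifted by $th$), so one must argue that this drift vanishes in the limit and can be absorbed by the continuity hypothesis; the mean value inequality itself and the harmless behaviour of the endpoint $s=t$ (work on $[0,t)$ and invoke continuity of $\theta_t$) are routine by comparison.
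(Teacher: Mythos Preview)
Your mean value step invokes joint continuity of $(x,y)\mapsto{\rm d}_x\psi(x,y)(u)$ at $(\bar x,\bar y)$, but the hypothesis only gives continuity of $y\mapsto{\rm d}_x\psi(\bar x,y)(u)$ with $x$ held at $\bar x$ (or, in the other case, of $x\mapsto{\rm d}_y\psi(x,\bar y)(h)$ with $y$ held at $\bar y$). In your sandwich
\[
\inf_{s\in[0,t)}{\rm d}_x\psi(\bar x+su,\bar y+th)(u)\;\le\;A(t)\;\le\;\sup_{s\in[0,t)}{\rm d}_x\psi(\bar x+su,\bar y+th)(u),
\]
the evaluation points $(\bar x+su,\bar y+th)$ have \emph{both} coordinates drifting as $t\downarrow 0$; squeezing $A(t)$ to ${\rm d}_x\psi(\bar x,\bar y)(u)$ requires controlling the variation in the first argument as well, and nothing in the assumptions provides that. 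Semidifferentiability alone does not make $x\mapsto{\rm d}_x\psi(x,y)(u)$ continuous. You correctly flag the drift as the obstacle, but the available continuity hypothesis cannot absorb it.

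The paper sidesteps this by inserting \emph{two} intermediate points rather than one, producing the cross-difference
\[
\psi(\bar x+tu,\bar y+th)-\psi(\bar x+tu,\bar y)-\psi(\bar x,\bar y+th)+\psi(\bar x,\bar y)
\]
together with the two axial increments $\psi(\bar x+tu,\bar y)-\psi(\bar x,\bar y)$ and $\psi(\bar x,\bar y+th)-\psi(\bar x,\bar y)$. The axial increments give ${\rm d}_x\psi(\bar x,\bar y)(u)$ and ${\rm d}_y\psi(\bar x,\bar y)(h)$ directly. For the cross-difference one applies the first-order expansion in $y$ at the two base points $(\bar x+tu,\bar y)$ and $(\bar x,\bar y)$, both lying on the line $y=\bar y$; dividing by $t$ leaves ${\rm d}_y\psi(\bar x+tu,\bar y)(h)-{\rm d}_y\psi(\bar x,\bar y)(h)$, which tends to zero exactly by the assumed continuity of $x\mapsto{\rm d}_y\psi(x,\bar y)(h)$. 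The decomposition is tailored so that partial subderivatives are evaluated only along the coordinate axis where continuity is granted; your two-term split forces the evaluation off that axis and therefore needs a strictly stronger assumption than the proposition states.
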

\begin{proof}
Suppose that $\psi$ is  semidifferentiable around $(\bar x,\bar y)$. Moreover suppose that  ${\rm d}_y \psi(x,\bar y)(h)$ is a continuous function of $x$ around $\bar x$. Then we have
\begin{align*}
{\rm d} \psi(\bar x,\bar y)(u,h) &= \lim_{t\downarrow 0} \frac{\psi(\bar{x}+tu,\bar{y}+th)-\psi(\bar{x},\bar y)}{t} \\
& = \lim_{t\downarrow 0} \frac{\psi(\bar{x}+tu,\bar{y}+th)-\psi(\bar{x}+tu,\bar{y})-\psi(\bar{x},\bar{y}+th)+\psi(\bar{x},\bar{y})}{t}\\
&\quad + \lim_{t\downarrow 0} \frac{\psi(\bar{x}+tu,\bar{y})-\psi(\bar{x},\bar{y})}{t}+\lim_{t\downarrow 0} \frac{\psi(\bar{x},\bar{y}+th)-\psi(\bar{x},\bar{y})}{t} \\
&={\rm d}_x \psi(\bar x,\bar y)(u)+{\rm d}_y \psi(\bar x,\bar y)(h).
\end{align*}
The last equality holds by \cite[Theorem 7.21]{RoWe98}, i.e.,
\begin{eqnarray*}
\psi(\bar{x}+tu,\bar{y}+th)-\psi(\bar{x}+tu,\bar{y}) &=& {{\rm d}_y \psi(\bar x+tu,\bar y)(th)
 }+o(t)=t{\rm d}_y \psi(\bar x+tu,\bar y)(h)+
 o(t),\\
 \psi(\bar{x},\bar{y}+th)-\psi(\bar{x},\bar{y}) &=&  {{\rm d}_y \psi(\bar x,\bar y)(th)}+
 o(t)=t{\rm d}_y \psi(\bar x,\bar y)(h)+
 o(t).\end{eqnarray*}
 Similarly, if $\psi$ is semidifferentiable around $(\bar x,\bar y)$ and   ${\rm d}_x \psi(\bar x,y)(u)$ is a continuous function of $y$, then
 $${\rm d} \psi(\bar x,\bar y)(u,h)={\rm d}_x \psi(\bar x,\bar y)(u)+{\rm d}_y \psi(\bar x,\bar y)(h).$$
 \end{proof}

 \begin{proposition}
 Suppose that $\psi$ is twice semidifferentiable around $(\bar x,\bar y)$, the separation property holds for the subderivative of $\psi$ at $(\bar x,\bar y)$, and ${\rm d}^2_{xy} \psi(x,\bar y)(u,h)={\rm d}^2_{yx} \psi(x,\bar y)(u,h)$. Suppose further that either  ${\rm d}^2_{yy} \psi(x,\bar y)(h)$ is a continuous function of $x$ around $\bar x$ or ${\rm d}^2_{xx} \psi(\bar x,y)(u)$ is a continuous function of $y$ around $\bar y$. Then the separation property holds for the second subderivative of $\psi$ at $(\bar x,\bar y)$.
\end{proposition}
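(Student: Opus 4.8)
The plan is to imitate the proof of Proposition~\ref{separation}, but push every Taylor-type expansion one order further. Assume, without loss of generality, that $x\mapsto{\rm d}^2_{yy}\psi(x,\bar y)(h)$ is continuous near $\bar x$; the other case is handled identically with the roles of $x$ and $y$ interchanged, and the symmetry assumption ${\rm d}^2_{xy}\psi(x,\bar y)(u,h)={\rm d}^2_{yx}\psi(x,\bar y)(u,h)$ is invoked at the very end to match the stated form. Fix $(u,h)$. Since $\psi$ is twice semidifferentiable at $(\bar x,\bar y)$, by \eqref{eqn2.5new} the number ${\rm d}^2\psi(\bar x,\bar y)(u,h)$ is the genuine limit, as $t\downarrow 0$, of $\big(\psi(\bar x+tu,\bar y+th)-\psi(\bar x,\bar y)-t\,{\rm d}\psi(\bar x,\bar y)(u,h)\big)/(\tfrac12 t^2)$; moreover, by the computation behind \eqref{2nd-cal} and its $x$-counterpart, the restrictions $y\mapsto\psi(\bar x,y)$ and $x\mapsto\psi(x,\bar y)$ are twice semidifferentiable at $\bar y$ and $\bar x$, with second subderivatives ${\rm d}^2_{yy}\psi(\bar x,\bar y)(h)$ and ${\rm d}^2_{xx}\psi(\bar x,\bar y)(u)$.

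Next I would split the second difference in the standard way,
\[
\psi(\bar x+tu,\bar y+th)-\psi(\bar x,\bar y)=A(t)+B(t)+C(t),
\]
with $A(t):=\psi(\bar x+tu,\bar y)-\psi(\bar x,\bar y)$, $B(t):=\psi(\bar x,\bar y+th)-\psi(\bar x,\bar y)$, and the mixed term $C(t):=\psi(\bar x+tu,\bar y+th)-\psi(\bar x+tu,\bar y)-\psi(\bar x,\bar y+th)+\psi(\bar x,\bar y)$. Applying \eqref{eqn2.5new} to the two restrictions expands $A(t)$ and $B(t)$ through order $t^2$, and their linear parts combine, by the assumed first-order separation \eqref{d=dx+dy}, into $t\,{\rm d}\psi(\bar x,\bar y)(u,h)$. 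Substituting into the limit of the previous paragraph reduces the whole statement to proving
\[
C(t)=t^2\,{\rm d}^2_{xy}\psi(\bar x,\bar y)(u,h)+o(t^2),
\]
since then dividing by $\tfrac12 t^2$ and letting $t\downarrow 0$ produces exactly \eqref{2ndseparation}.

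To handle the mixed term I would write $C(t)=\eta_t(\bar x+tu)-\eta_t(\bar x)$ with $\eta_t(x):=\psi(x,\bar y+th)-\psi(x,\bar y)$, and expand $\eta_t$ in the $y$-direction at the base points $x=\bar x$ and $x=\bar x+tu$, using that $y\mapsto\psi(x,y)$ is twice semidifferentiable at $\bar y$ for all $x$ near $\bar x$ (this is where ``twice semidifferentiable \emph{around} $(\bar x,\bar y)$'' enters): $\eta_t(x)=t\,{\rm d}_y\psi(x,\bar y)(h)+\tfrac12 t^2\,{\rm d}^2_{yy}\psi(x,\bar y)(h)+o(t^2)$. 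Subtracting, the quadratic parts differ by $\tfrac12 t^2\big({\rm d}^2_{yy}\psi(\bar x+tu,\bar y)(h)-{\rm d}^2_{yy}\psi(\bar x,\bar y)(h)\big)$, which is $o(t^2)$ because $x\mapsto{\rm d}^2_{yy}\psi(x,\bar y)(h)$ is continuous at $\bar x$; and the linear parts leave $t\big({\rm d}_y\psi(\bar x+tu,\bar y)(h)-{\rm d}_y\psi(\bar x,\bar y)(h)\big)$. Since by the second paragraph $C(t)/t^2$ converges, the difference quotient $\big({\rm d}_y\psi(\bar x+tu,\bar y)(h)-{\rm d}_y\psi(\bar x,\bar y)(h)\big)/t$ converges, and I would identify its limit with ${\rm d}^2_{xy}\psi(\bar x,\bar y)(u,h)$ by specializing the definition of ${\rm d}^2_{xy}$ to $u'\equiv u$, $h'\equiv h$ — the existence of the limit, together with the regularity of ${\rm d}_y\psi(x,\bar y)(\cdot)$ coming from the (twice) semidifferentiability of $\psi$, guaranteeing that the ``$\liminf$'' in that definition is attained along this approach. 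This yields the displayed identity for $C(t)$, hence the claim; if instead ${\rm d}^2_{xx}\psi(\bar x,y)(u)$ is the continuous one, the same argument with $x$ and $y$ swapped gives the identity with ${\rm d}^2_{yx}$ in place of ${\rm d}^2_{xy}$, and the Clairaut-type symmetry hypothesis reconciles the two.

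The step I expect to be the main obstacle is the control of the remainders once the base point is allowed to move with $t$: one needs the $o(t^2)$ remainder in the expansion of $\eta_t(x)$ to be $o(t^2)$ \emph{uniformly} for $x$ on the segment from $\bar x$ to $\bar x+tu$, not merely at a fixed base point. I would extract this from the semidifferentiability of $\psi$ around $(\bar x,\bar y)$ exactly as the analogous uniformity is obtained in the proof of Proposition~\ref{separation} (via the local uniform convergence of difference quotients, \cite[Theorem 7.21]{RoWe98}). A secondary, lighter point is the identification of the limiting difference quotient with ${\rm d}^2_{xy}\psi(\bar x,\bar y)(u,h)$ rather than with a mere lower bound; this, together with the symmetry hypothesis, is what makes it legitimate to write \eqref{2ndseparation} with ${\rm d}^2_{xy}$ (and the coefficient $2$) regardless of which of the two continuity hypotheses is in force.
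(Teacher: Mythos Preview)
Your approach is essentially identical to the paper's: the same additive decomposition into $A(t)+B(t)+C(t)$, the same second-order expansion of $\eta_t(x)$ in $y$ at the moving base point $\bar x+tu$, the same cancellation of the quadratic parts via continuity of $x\mapsto{\rm d}^2_{yy}\psi(x,\bar y)(h)$, and the same final identification of the surviving difference quotient with ${\rm d}^2_{xy}\psi(\bar x,\bar y)(u,h)$. The two obstacles you flag---uniformity of the $o(t^2)$ remainder when the base point moves, and matching the limit to the $\liminf$ that defines ${\rm d}^2_{xy}$---are left implicit in the paper's proof as well, so you have reconstructed exactly the published argument (and, if anything, have been more candid about its soft spots).
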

\begin{proof}
Suppose that $\psi$ is twice semidifferentiable around $(\bar x,\bar y)$, the separation property holds for the subderivative of $\psi$ at $(\bar x,\bar y)$, and ${\rm d}^2_{xy} \psi(x,\bar y)(u,h)={\rm d}^2_{yx} \psi(x,\bar y)(u,h)$. Moreover suppose that ${\rm d}^2_{yy} \psi(x,\bar y)(h)$ is a continuous function of $x$ around $\bar x$. Then
  \begin{align*}
&\quad\ {\rm d}^2 \psi(\bar x,\bar y)(u,h) \\ &= \lim_{t\downarrow 0} \frac{\psi(\bar{x}+tu,\bar{y}+th)-\psi(\bar{x},\bar y)-t{\rm d}\psi(\bar x,\bar y)(u,h)}{\frac{1}{2} t^2} \\
& = \lim_{t\downarrow 0} \frac{\psi(\bar{x}+tu,\bar{y}+th)-\psi(\bar{x}+tu,\bar{y})-\psi(\bar{x},\bar{y}+th)+\psi(\bar{x},\bar{y})}{\frac{1}{2}t}\\
&\quad + \lim_{t\downarrow 0} \frac{\psi(\bar{x}+tu,\bar{y})-\psi(\bar{x},\bar{y})-t{\rm d}_x\psi(\bar x,\bar y)(u)}{\frac{1}{2}t^2}+\lim_{t\downarrow 0} \frac{\psi(\bar{x},\bar{y}+th)-\psi(\bar{x},\bar{y})-td_y\psi(\bar x,\bar y)(h)}{\frac{1}{2}t^2} \\
&= \lim_{t\downarrow 0} \frac{{\rm d}_y\psi(\bar{x}+tu,\bar{y})(th)+\frac{1}{2}t^2 {\rm d}_{yy}^2 \psi(\bar x+tu, \bar y)(h)+o(t^2)-{\rm d}_y\psi(\bar{x},\bar{y})(th)-\frac{1}{2} t^2{\rm d}_{yy}^2 \psi(\bar x, \bar y)(h)-o(t^2)}{\frac{1}{2} t^2}\\
&\quad +{\rm d}^2_{xx} \psi(\bar x,\bar y)(u)+{\rm d}^2_{yy} \psi(\bar x,\bar y)(h)\\
&=2{\rm d}_{xy}^2 \psi(\bar x,\bar y)(u,h)+{\rm d}^2_{xx} \psi(\bar x,\bar y)(u)+{\rm d}^2_{yy} \psi(\bar x,\bar y)(h).
\end{align*}
The third equality holds by \cite[Exercise 13.7 and Proposition 13.5]{RoWe98}, i.e.,
\begin{eqnarray*}
\psi(\bar{x}+tu,\bar{y}+th)-\psi(\bar{x}+tu,\bar{y}) &=& {\rm d}_y\psi(\bar{x}+tu,\bar{y})(th)+\frac{1}{2} t^2{\rm d}_{yy}^2 \psi(\bar x+tu, \bar y)(h)+o(t^2),\\
 \psi(\bar{x},\bar{y}+th)-\psi(\bar{x},\bar{y}) &=&  {\rm d}_y\psi(\bar{x},\bar{y})(th)+\frac{1}{2} t^2 {\rm d}_{yy}^2 \psi(\bar x, \bar y)(h)+o(t^2).\end{eqnarray*}
 Similar discussions can be given under the assumption that  ${\rm d}^2_{xx} \psi(\bar x,y)(u)$ is a continuous function of $y$ around $\bar y$.
\end{proof}

The computation of subderivatives is crucial for applying the optimality conditions. In cases where $\psi$ is a composition of certain special functions, we provide the following chain rule for computing subderivatives which will be used in Example \ref{example-non} . In fact one can easily extend the result to the  more general case where $\psi(x_1, \dots, x_n):=\varphi(g(x_1),\dots,g(x_n))$, where $\varphi$ is a $C^2$ function.

\begin{proposition}\label{chainrule} Let $\psi(x,y):=\varphi(g(x),g(y))$ where $\varphi(\alpha,\beta):\mathbb{R}\times \mathbb{R} \rightarrow \mathbb{R}$ is $C^2$ and $g:\mathbb{R}^n \rightarrow \mathbb{R}$ is Lipschitz continuous and directionally differentiable around $\bar x$ and $\bar y$. Then $\psi$ is Lipschitz continuous, twice semidifferentiable at $(\bar x,\bar y)$ in any direction $(u,h)$. Moreover, the separation property holds for
the subderivative of $\psi$ at $(\bar x,\bar y)$, and
\begin{eqnarray*}
{\rm d}\psi(\bar x,\bar y)(u,h)&=&\nabla \varphi(g(\bar x),g(\bar y))^T (g'(\bar x;u),g'(\bar y;h)),\\
{\rm d}_x\psi(\bar x, \bar y)(u)&=&\nabla_\alpha \varphi (g(\bar x),g( \bar y))g'(\bar x;u),\\
{\rm d}_y\psi(\bar x, \bar y)(h)&=&\nabla_\beta \varphi (g(\bar x),g(\bar y))g'(\bar y;h),\\
{\rm d}^2\psi(\bar x,\bar y)(u,h)&=&(g'(\bar x;u),g'(\bar y;h))^T \nabla^2 \varphi(g(\bar x),g(\bar y))(g'(\bar x;u),g'(\bar y;h)),\\
{\rm d}_{yy}^2\psi(\bar x,\bar y)(h)&=&{g'(\bar y;h)^T \nabla_{yy}^2 \varphi(g(\bar x),g(\bar y))g'(\bar y;h).}
\end{eqnarray*}
\end{proposition}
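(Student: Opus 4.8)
The plan is to reduce the whole statement to the classical first‑ and second‑order Taylor expansions of the $C^2$ function $\varphi$, composed with the first‑order directional expansion of $g$. \emph{Lipschitz continuity of $\psi$} is immediate: $g$ is Lipschitz in a neighborhood of each of $\bar x,\bar y$, so its values stay in a bounded interval there, and $\varphi$, being $C^1$, is Lipschitz on that bounded set; hence $\psi=\varphi(g(\cdot),g(\cdot))$ is a composition of Lipschitz maps and is Lipschitz near $(\bar x,\bar y)$. Since $g$ is Lipschitz and directionally differentiable around $\bar x$ (resp. $\bar y$), by the equivalence of directional differentiability and semidifferentiability for Lipschitz functions recalled in Section~\ref{prelim}, $g$ is semidifferentiable there; consequently $w\mapsto g'(\bar x;w)$ is finite, positively homogeneous and continuous, ${\rm d}g(\bar x)(u)=g'(\bar x;u)$, and along any sequences $t_k\downarrow 0$, $u_k\to u$ one has $g(\bar x+t_ku_k)-g(\bar x)=t_kg'(\bar x;u)+o(t_k)$, and likewise at $\bar y$.

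For the \emph{first‑order formulas}, fix arbitrary sequences $t_k\downarrow 0$, $(u_k,h_k)\to(u,h)$ and set $A_k:=g(\bar x+t_ku_k)-g(\bar x)$, $B_k:=g(\bar y+t_kh_k)-g(\bar y)$, both $O(t_k)$ by the Lipschitz property. Differentiability of $\varphi$ at $(g(\bar x),g(\bar y))$ gives
\[
\psi(\bar x+t_ku_k,\bar y+t_kh_k)-\psi(\bar x,\bar y)=\nabla\varphi(g(\bar x),g(\bar y))^T(A_k,B_k)+o(\|(A_k,B_k)\|),
\]
and since $\|(A_k,B_k)\|=O(t_k)$ the error term is $o(t_k)$; dividing by $t_k$ and letting $k\to\infty$, using $A_k/t_k\to g'(\bar x;u)$ and $B_k/t_k\to g'(\bar y;h)$, shows the limit exists and equals $\nabla\varphi(g(\bar x),g(\bar y))^T(g'(\bar x;u),g'(\bar y;h))$. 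This proves semidifferentiability of $\psi$ at $(\bar x,\bar y)$ together with the stated expression for ${\rm d}\psi(\bar x,\bar y)(u,h)$; restricting to $h=0$ and to $u=0$ (and using $g'(\bar z;0)=0$) yields the formulas for ${\rm d}_x\psi$ and ${\rm d}_y\psi$, whose sum is exactly ${\rm d}\psi(\bar x,\bar y)(u,h)$, i.e. the separation property~\eqref{d=dx+dy}.

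The \emph{second‑order formula} is the main work. With $a:=g'(\bar x;u)$, $b:=g'(\bar y;h)$, and $a_k:=g'(\bar x;u_k)\to a$, $b_k:=g'(\bar y;h_k)\to b$ by continuity of $g'(\bar z;\cdot)$, and with the partials of $\varphi$ below evaluated at $(g(\bar x),g(\bar y))$, the numerator in the definition of ${\rm d}^2\psi(\bar x,\bar y)(u,h)$ along these sequences is
\[
N_k:=\psi(\bar x+t_ku_k,\bar y+t_kh_k)-\psi(\bar x,\bar y)-t_k(\nabla_\alpha\varphi\,a_k+\nabla_\beta\varphi\,b_k).
\]
Applying the second‑order Taylor expansion of $\varphi$ at $(g(\bar x),g(\bar y))$ to the increment $(A_k,B_k)$ gives
\[
N_k=\nabla_\alpha\varphi\,(A_k-t_ka_k)+\nabla_\beta\varphi\,(B_k-t_kb_k)+\tfrac12(A_k,B_k)^T\nabla^2\varphi\,(A_k,B_k)+o(\|(A_k,B_k)\|^2).
\]
Because $A_k=t_ka_k+o(t_k)$, $B_k=t_kb_k+o(t_k)$ with $a_k,b_k$ bounded, a short computation gives $(A_k,B_k)^T\nabla^2\varphi\,(A_k,B_k)=t_k^2(a,b)^T\nabla^2\varphi\,(a,b)+o(t_k^2)$, while $o(\|(A_k,B_k)\|^2)=o(t_k^2)$ since $\|(A_k,B_k)\|=O(t_k)$. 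Hence dividing $N_k$ by $\tfrac12 t_k^2$ and passing to the limit yields the claimed value $(a,b)^T\nabla^2\varphi(g(\bar x),g(\bar y))(a,b)$ — a genuine limit, so $\psi$ is twice semidifferentiable at $(\bar x,\bar y)$ for $(u,h)$ — \emph{provided} the remaining term $\nabla_\alpha\varphi\,(A_k-t_ka_k)+\nabla_\beta\varphi\,(B_k-t_kb_k)$ is $o(t_k^2)$. This last point is \textbf{the crux and the expected main obstacle}: semidifferentiability of $g$ only delivers $A_k-t_ka_k=o(t_k)$, so one must upgrade this to $o(t_k^2)$, i.e. show that the increment of $g$ matches its directional‑derivative linear model to second order along the relevant sequences; I would establish this from the structural hypotheses on $g$ (it is automatic, for instance, when $g$ is piecewise linear, since then $g(\bar x+v)=g(\bar x)+g'(\bar x;v)$ for all small $v$), which also covers the function $g$ used in Example~\ref{example-non}. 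Finally, specializing $u=0$ in the formula for ${\rm d}^2\psi$ — equivalently, running the one‑variable version of the computation for the map $y\mapsto\varphi(g(\bar x),g(y))$ — gives the stated formula for ${\rm d}^2_{yy}\psi(\bar x,\bar y)(h)$.
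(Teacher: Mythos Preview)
Your approach is essentially the same as the paper's: Taylor–expand $\varphi$ to first and second order at $(g(\bar x),g(\bar y))$, and feed in the first–order expansion of $g$ coming from its semidifferentiability. The only cosmetic difference is that you verify the separation property directly from ${\rm d}_x\psi+{\rm d}_y\psi={\rm d}\psi$, whereas the paper computes ${\rm d}_y\psi(x,\bar y)(h)=\nabla_\beta\varphi(g(x),g(\bar y))\,g'(\bar y;h)$, observes its continuity in $x$, and invokes Proposition~\ref{separation}; also, the paper subtracts $t\,{\rm d}\psi(\bar x,\bar y)(u,h)$ (with the \emph{fixed} direction) in the second–order difference quotient, while you subtract with the varying direction $(u_k,h_k)$, which is what the definition of ${\rm d}^2\psi(\bar z)(w)$ actually calls for.

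On the substantive point you flag as ``the crux'': you are right that this is the delicate step, and the paper's proof does \emph{not} handle it any more carefully than you do. After the second–order Taylor expansion of $\varphi$, the paper is left with the term
\[
\frac{\nabla\varphi(g(\bar x),g(\bar y))^T\big((g(\bar x+tu')-g(\bar x),\,g(\bar y+th')-g(\bar y))-t(g'(\bar x;u),g'(\bar y;h))\big)}{\tfrac12 t^2}
\]
and simply asserts that ``the limit exists'', without explaining why this first–order remainder of $g$ divided by $t^2$ tends to zero. Semidifferentiability of $g$ alone only gives this remainder as $o(t)$, not $o(t^2)$; so the argument, as written in both your proposal and the paper, is complete only under an additional second–order assumption on $g$ (e.g.\ $g$ twice semidifferentiable at $\bar x,\bar y$ with ${\rm d}^2g\equiv0$, as for piecewise linear $g$), which is precisely the situation of Example~\ref{example-non} with $g=|\cdot|$. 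In short: your diagnosis is correct, and you should not expect to close this gap from the stated hypotheses alone---the paper does not either.
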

\begin{proof}
Since
 \begin{eqnarray*}
&& \frac{\psi(\bar x+tu',\bar y +th')-\psi(\bar x,\bar y)}{t}\\
&=& \frac{\varphi(g(\bar x+tu'),g(\bar y +th'))-\varphi(g(\bar x),g(\bar y))}{t}\\
&=& \frac{\nabla \varphi(g(\bar x),g(\bar y))^T (g(\bar x+tu')-g(\bar x),g(\bar y +th'))-g(\bar y))}{t}+\frac{o(t)}{t},
 \end{eqnarray*}
 the limit exists when $t\downarrow 0, u'\rightarrow u, h'\rightarrow h$,
 $\psi$ is also directionally differentiable at $(\bar x,\bar y)$ and the formula for ${\rm d}\psi(\bar x,\bar y)(u,h)$ is obtained.
 Similarly for each $x,y$ around $(\bar x,\bar y)$,
\begin{eqnarray*}
{\rm d}_x\psi(\bar x, y)(u)&=&\nabla_\alpha \varphi (g(\bar x),g( y))g'(\bar x;u),\\
{\rm d}_y\psi(x, \bar y)(h)&=&\nabla_\beta \varphi (g(x),g(\bar y))g'(\bar y;h).
\end{eqnarray*}
Hence ${\rm d}_x\psi(\bar x, y)(u)$ and ${\rm d}_y\psi(x,\bar y)(h)$ are continuous in $y, x$ respectively. Therefore by Proposition \ref{separation}, the separation property holds at $(\bar x,\bar y)$. Since
  \begin{eqnarray*}
&& \frac{\psi(\bar x+tu',\bar y +th')-{\psi}(\bar x,\bar y)-t{\rm d}{\psi}(\bar x,\bar y)(u,h)}{\frac{1}{2}t^2}\\
&=& \frac{\varphi(g(\bar x+tu'),g(\bar y +th'))-\varphi(g(\bar x),g(\bar y))-t\nabla \varphi(g(\bar x),g(\bar y))^T (g'(\bar x;u),g'(\bar y;h))}{\frac{1}{2}t^2}\\
&=& \frac{\nabla \varphi(g(\bar x),g(\bar y))^T (g(\bar x+tu')-g(\bar x),g(\bar y +th')-g(\bar y))- t(g'(\bar x;u),g'(\bar y;h))}{\frac{1}{2}t^2}+\frac{o(t^2)}{\frac{1}{2}t^2}\\
&& +\frac{1}{2}\frac{(g(\bar x+tu')-g(\bar x),g(\bar y +th')-g(\bar y))^T \nabla^2 \varphi(g(\bar x),g(\bar y)) (g(\bar x+tu')-g(\bar x),g(\bar y +th')-g(\bar y))}{\frac{1}{2}t^2},
 \end{eqnarray*}
 and the limit exists when $t\downarrow 0, u'\rightarrow u, h'\rightarrow h$,
 $\psi$ is twice directionally differentiable at $(\bar x,\bar y)$ and the formula for
 ${\rm d}^2\psi(\bar x,\bar y)(u,h)$
 is obtained.
 {Similarly,
 \begin{eqnarray*}
 {\rm d}_{yy}^2\psi(\bar x,\bar y)(h)&=&g'(\bar y;h)^T \nabla_{yy}^2 \varphi(g(\bar x),g(\bar y))g'(\bar y;h).
\end{eqnarray*}}
 \end{proof}

 By Definition \ref{Defn2.6}, for a set $S \subseteq \mathbb{R}^r, \bar z\in S,$ and $\bar v,w \in \mathbb{R}^r$, the second subderivative of the indicator function $\delta_S$ at $\bar z$ for
 $\bar v$ and $w$ is
  \begin{equation}\label{indicator}
\begin{aligned}
 {\rm d}^2\delta_S(\bar z;\bar v)(w):= \liminf\limits_{{t\downarrow 0} \atop {w'\to w}} \frac{\delta_S(\bar{z}+tw')-t\langle \bar{v}, w'\rangle}{\frac{1}{2}t^2}
= \liminf\limits_{{t\downarrow 0,w'\to w} \atop {\bar{z}+tw' \in S}} \frac{-2\langle \bar{v}, w'\rangle}{t}.
\end{aligned}
 \end{equation}

 \begin{proposition}[\text{\cite{BenkoGfrerer2022}}] Consider a closed set $S \subseteq \mathbb{R}^r, \bar z\in S,$ and $\bar v,w \in \mathbb{R}^r$. The following statement hold:
\begin{itemize}
\item[(i)] If $w\not \in T_{S}(\bar z)$ or $\langle \bar v,w\rangle <0, $ then ${\rm d}^2\delta_S(\bar z;\bar v)(w)=\infty$.
\item[(ii)] We have
$${\rm d}^2\delta_S (\bar z;\bar v)(w)\leq -\sigma_{T^2_S(\bar z; w)}(\bar  v)$$
if and only if $w\in T_S(\bar z)$ and $\langle \bar v, w\rangle \geq 0$ or $T_S^2(\bar z; w)=\emptyset,$ where $\sigma_{T_{S}^{2}(\bar{z}; w)}(\bar v)$ denotes the support function of the second-order tangent set $T_{S}^{2}(\bar{z}; w)$.
\end{itemize}
\end{proposition}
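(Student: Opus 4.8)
The plan is to work throughout with the reformulation \eqref{indicator},
\[
{\rm d}^2\delta_S(\bar z;\bar v)(w)=\liminf_{{t\downarrow 0,\,w'\to w}\atop{\bar z+tw'\in S}}\frac{-2\langle\bar v,w'\rangle}{t},
\]
equivalently $\liminf_{t\downarrow 0,\,w'\to w}(\delta_S(\bar z+tw')-t\langle\bar v,w'\rangle)/(\frac{1}{2}t^2)$, with the convention that a liminf over an empty index set is $+\infty$. I would prove part (i) first, since it feeds the ``only if'' direction of part (ii).

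For part (i) I split into two cases. If $w\notin T_S(\bar z)$, then by the definition of the contingent cone no sequences $t_k\downarrow 0$, $w_k\to w$ satisfy $\bar z+t_kw_k\in S$; a short argument by contradiction (extracting a diagonal sequence) upgrades this to the existence of $\varepsilon>0$ and $\bar t>0$ with $\bar z+tw'\notin S$ whenever $0<t<\bar t$ and $\|w'-w\|\le\varepsilon$. Hence every term of the defining liminf, in quotient form, equals $+\infty$ for $(t,w')$ near $(0,w)$, so ${\rm d}^2\delta_S(\bar z;\bar v)(w)=\infty$. If instead $\langle\bar v,w\rangle<0$, continuity of $w'\mapsto\langle\bar v,w'\rangle$ gives $\varepsilon>0$ and a neighbourhood $W$ of $w$ with $\langle\bar v,w'\rangle\le-\varepsilon$ on $W$; then for small $t$ and $w'\in W$ the quotient is $\ge 2\varepsilon/t$ regardless of whether $\bar z+tw'\in S$, and $2\varepsilon/t\to\infty$, so again ${\rm d}^2\delta_S(\bar z;\bar v)(w)=\infty$.

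For part (ii) the ``if'' direction has two sub-cases. If $T_S^2(\bar z;w)=\emptyset$, then $\sigma_{T_S^2(\bar z;w)}(\bar v)=-\infty$, so $-\sigma_{T_S^2(\bar z;w)}(\bar v)=+\infty$ and the inequality is automatic. If $w\in T_S(\bar z)$ and $\langle\bar v,w\rangle\ge0$, I would fix an arbitrary $\nu\in T_S^2(\bar z;w)$ and choose $t_k\downarrow 0$, $\nu_k\to\nu$ with $\bar z+t_kw+\frac{1}{2}t_k^2\nu_k\in S$; plugging the admissible pair $(t_k,w_k)$ with $w_k:=w+\frac{1}{2}t_k\nu_k\to w$ into the quotient form makes the $\delta_S$-term vanish and yields $-t_k\langle\bar v,w_k\rangle/(\frac{1}{2}t_k^2)=-\langle\bar v,w\rangle/(\frac{1}{2}t_k)-\langle\bar v,\nu_k\rangle\le-\langle\bar v,\nu_k\rangle$, using $\langle\bar v,w\rangle\ge0$. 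Letting $k\to\infty$ gives ${\rm d}^2\delta_S(\bar z;\bar v)(w)\le-\langle\bar v,\nu\rangle$, and taking the infimum over $\nu\in T_S^2(\bar z;w)$ gives ${\rm d}^2\delta_S(\bar z;\bar v)(w)\le-\sigma_{T_S^2(\bar z;w)}(\bar v)$. For the ``only if'' direction I would argue by contraposition: suppose $T_S^2(\bar z;w)\neq\emptyset$ and ($w\notin T_S(\bar z)$ or $\langle\bar v,w\rangle<0$). Since any admissible sequence realizing a point of $T_S^2(\bar z;w)$ also exhibits $w$ as a contingent vector, $T_S^2(\bar z;w)\neq\emptyset$ forces $w\in T_S(\bar z)$, so necessarily $\langle\bar v,w\rangle<0$; then part (i) gives ${\rm d}^2\delta_S(\bar z;\bar v)(w)=\infty$, while $-\sigma_{T_S^2(\bar z;w)}(\bar v)<\infty$ because the support function of a nonempty set is $>-\infty$. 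Thus the inequality fails, which is the desired contrapositive.

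All steps are short; I expect the only genuinely technical point to be the upgrade in part (i) from ``no admissible sequence exists'' to a uniform neighbourhood on which $\bar z+tw'\notin S$, which is handled by a direct contradiction/diagonalization argument. Care is also needed with the $+\infty$ conventions (empty liminf, support function of the empty set) and, in the ``only if'' direction, with verifying that $-\sigma_{T_S^2(\bar z;w)}(\bar v)$ is genuinely finite once $T_S^2(\bar z;w)$ is nonempty, since that finiteness is what makes the contradiction with part (i) go through.
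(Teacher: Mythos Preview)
The paper does not supply its own proof of this proposition; it is quoted directly from the cited reference \cite{BenkoGfrerer2022} and left unproved. So there is nothing in the paper to compare your argument against.

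That said, your proposal is correct and self-contained. Part (i) is handled exactly as one would expect: the ``no admissible sequence'' upgrade to a uniform neighbourhood is the standard negation of the definition of $T_S(\bar z)$, and the case $\langle\bar v,w\rangle<0$ follows from continuity and the $1/t$ blow-up. For part (ii), your ``if'' direction via the test sequence $w_k=w+\tfrac{1}{2}t_k\nu_k$ is the natural choice and the estimate ${\rm d}^2\delta_S(\bar z;\bar v)(w)\le-\langle\bar v,\nu\rangle$ for every $\nu\in T_S^2(\bar z;w)$, followed by taking the infimum over $\nu$, is clean; the empty-$T_S^2$ sub-case is handled correctly by the convention $\sigma_\emptyset=-\infty$. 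Your contrapositive for ``only if'' is also sound: nonemptiness of $T_S^2(\bar z;w)$ indeed forces $w\in T_S(\bar z)$ (any second-order tangent sequence projects to a first-order one), so one is pushed into the case $\langle\bar v,w\rangle<0$, where part (i) gives ${\rm d}^2\delta_S=\infty$, while $-\sigma_{T_S^2(\bar z;w)}(\bar v)<\infty$ because the support function of a nonempty set is never $-\infty$. The only cosmetic point: when you write ``regardless of whether $\bar z+tw'\in S$'' in part (i), it would be cleaner to say that if $\bar z+tw'\notin S$ the full quotient with the $\delta_S$ term is already $+\infty$, and otherwise the bound $2\varepsilon/t$ applies.
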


Consider a function $\psi: \mathbb{R}^r \rightarrow \mathbb{R}$. If  $\psi$ is differentiable, then by taking $\bar v$ as $\nabla \psi(\bar z)$ in (\ref{indicator}), we have
$${\rm d}^2\delta_S(\bar z; \nabla \psi(\bar z))(w)= \liminf\limits_{{t\downarrow 0} \atop {w'\to w}} \frac{\delta_S(\bar{z}+tw')-t\langle \nabla \psi(\bar z), w'\rangle}{\frac{1}{2}t^2}=\liminf\limits_{{t\downarrow 0,w'\to w} \atop {\bar{z}+tw' \in S}} \frac{-2\langle \nabla \psi(\bar z), w'\rangle}{t}
.$$
In order to deal with a nonsmooth function $\psi$,  we propose the following definition.

\begin{definition}\label{Defn2.7}
Let $S \subseteq \mathbb{R}^r$, $\psi: \mathbb{R}^r \rightarrow \mathbb{R}$ be semidifferentiable at $\bar z \in S$.
 The second subderivative  of $\delta_{S}$ at $\bar z$ for $ {\rm d}\psi(\bar z)$   and $w$ is
$${\rm d}^2 \delta_S(\bar z; {\rm d}\psi(\bar z))(w):= \liminf\limits_{t \downarrow 0, w' \to w} \frac{\delta_{S}(\bar z+tw')-t{\rm d}\psi(\bar z)(w')}{\frac{1}{2}t^2}= \liminf\limits_{{t\downarrow 0,w'\to w} \atop {\bar{z}+tw' \in S}} \frac{-2{\rm d}\psi(\bar z)(w')}{t},$$
where the sum of $\infty$ and $-\infty$ is interpreted as $\infty$.
We say that $\delta_{S}$ is twice epi-differentiable at $\bar z \in S$ for ${\rm d}\psi(\bar z)$ if for any $w \in \mathbb{R}^r$ and any sequence $t_k\downarrow 0$ there exists a sequence $w_k\rightarrow w$ such that
\begin{equation*}\label{}
	{\rm d}^2 \delta_S(\bar z;{\rm d}\psi(\bar z))(w) = \lim_{k\rightarrow \infty}  \frac{\delta_{S}(\bar z+t_kw_k)- t_k{\rm d}\psi(\bar z)(w_k)}{\frac{1}{2}t_k^2}.
\end{equation*}
\end{definition}Suppose that $\psi$ is semidifferentiable at $\bar z$ for $w$, then we have
 $\lim_{w'\rightarrow w} {\rm d}\psi(\bar z)(w')={\rm d}\psi (\bar z)(w).$
It follows  that \begin{equation}\label{infinity}
 {\rm d}^2 \delta_S(\bar z; {\rm d}\psi(\bar z))(w)=\infty \qquad \mbox{ if }  w\not \in T_{S}(\bar z) \mbox{ or } {\rm d}\psi(\bar z)(w)<0.
 \end{equation}

We now introduce  parabolic properties of a set.
\begin{definition}[parabolic properties of sets \cite{ABM21}]
Let  $S \subseteq \mathbb{R}^{r}$ be  nonempty. $S $ is said to be  parabolically derivable at $\bar{z} \in \mathbb{R}^{r}$ for $w \in \mathbb{R}^{r}$ if $T_{S}^{2}(\bar{z}; w) \neq \emptyset$ and for each $\nu \in T_{S}^{2}(\bar{z}; w)$ there exist a number $\varepsilon>0$ and an arc $\xi:[0, \varepsilon] \rightarrow S$ such that $\xi(0)=\bar{z}, \xi_{++}^{\prime}(0)=w$, and $\xi_{++}^{\prime \prime}(0)=\nu$ with
$$
\xi_{++}^{\prime}(0):=\lim _{t \downarrow 0} \frac{\xi(t)-\xi(0)}{t},\quad \xi_{++}^{\prime \prime}(0):=\lim _{t \downarrow 0} \frac{\xi(t)-\xi(0)-t \xi_{++}^{\prime}(0)}{\frac{1}{2} t^{2}} .
$$
$S $ is said to parabolically regular at $\bar{z} \in S$ for $\bar{v} \in \mathbb{R}^{r}$ if for any $w \in \mathbb{R}^{r}$ with $\mathrm{d}^{2} \delta_{S}(\bar{z}; \bar{v})(w)<\infty$ there exist, among all the sequences $t_{k} \downarrow 0$ and $w_{k} \rightarrow w$ satisfying the condition
$$
\frac{\delta_{S}(\bar z +t_kw_k)-\delta_{S}(\bar z)-t_k \langle \bar v,w_k\rangle}{\frac{1}{2} t^{2}}   \rightarrow \mathrm{d}^{2} \delta_{S}(\bar{z}; \bar{v})(w) \text { as } k \rightarrow \infty,
$$
those with the additional property that
$
\limsup _{k \rightarrow \infty} \frac{\left\|w_{k}-w\right\|}{t_{k}}<\infty .
$
\end{definition}
The following results show that a set that has parabolic properties has useful properties.
\begin{proposition}[\text{\cite[Theorems 3.3 and 3.6]{ABM21}}]\label{Tm3.6ABM}
Let $S$ be a closed subset of $\mathbb{R}^{r}$ with $\bar{z} \in S$, and let $\bar v \in N_{S}^{p}(\bar{z})$. Assume further that $S$ is parabolically derivable at $\bar{z}$ for every vector $w \in T_S({\bar z}) \cap \{\bar v\}^{\perp}$. If $S$ is parabolically regular at $\bar{z}$ for $\bar v$, then the indicator function $\delta_{S}$ is properly twice epi-differentiable at $\bar z$ for $\bar v$, its second subderivative is finite for all $w \in T_S({\bar z}) \cap \{\bar v\}^{\perp}$ and is calculated by $\mathrm{d}^{2} \delta_{S}(\bar{z}; \bar v)(w)=-\sigma_{T_{S}^{2}(\bar{z}; w)}(\bar v)$.
\end{proposition}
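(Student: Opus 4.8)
\emph{Approach.} As the statement indicates, this is \cite[Theorems 3.3 and 3.6]{ABM21}; I sketch how the proof runs. I would establish the identity $\mathrm{d}^2\delta_S(\bar z;\bar v)(w)=-\sigma_{T_S^2(\bar z;w)}(\bar v)$ for $w\in T_S(\bar z)\cap\{\bar v\}^{\perp}$ by two opposing inequalities — the ``$\le$'' coming for free from the Benko--Gfrerer proposition stated above and the ``$\ge$'' from parabolic regularity — and then promote the $\liminf$ in the reduced form of \eqref{indicator} to a genuine limit along a recovery sequence built from the arcs furnished by parabolic derivability, which gives twice epi-differentiability. Two observations are used throughout: since $\bar v\in N_S^p(\bar z)\subseteq\widehat N_S(\bar z)$, \eqref{normalcone-equ} forces $\langle\bar v,w\rangle\le 0$ on $T_S(\bar z)$, so $\langle\bar v,w\rangle=0$ whenever $w\in T_S(\bar z)\cap\{\bar v\}^{\perp}$; and $\sigma_{T_S^2(\bar z;w)}(\bar v)$ is finite for such $w$ — it is $>-\infty$ because parabolic derivability makes $T_S^2(\bar z;w)\neq\emptyset$, and it is $<\infty$ because, for any $\nu\in T_S^2(\bar z;w)$ with associated $t_k\downarrow0,\ \nu_k\to\nu,\ \bar z+t_kw+\tfrac12 t_k^2\nu_k\in S$, the proximal estimate for $\bar v$ (with modulus $\gamma$) gives $t_k\langle\bar v,w\rangle+\tfrac12 t_k^2\langle\bar v,\nu_k\rangle\le\gamma\|t_kw+\tfrac12 t_k^2\nu_k\|^2$, whence $\langle\bar v,\nu\rangle\le 2\gamma\|w\|^2$ after dividing by $\tfrac12 t_k^2$ and using $\langle\bar v,w\rangle=0$.

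\emph{The easy inequality.} For $w\in T_S(\bar z)\cap\{\bar v\}^{\perp}$ we have $w\in T_S(\bar z)$ and $\langle\bar v,w\rangle\ge0$, so part (ii) of the Benko--Gfrerer proposition above yields $\mathrm{d}^2\delta_S(\bar z;\bar v)(w)\le-\sigma_{T_S^2(\bar z;w)}(\bar v)$.

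\emph{The substantive inequality and properness.} Fix $w\in T_S(\bar z)\cap\{\bar v\}^{\perp}$ and assume $\mathrm{d}^2\delta_S(\bar z;\bar v)(w)<\infty$, for otherwise there is nothing to prove. I would pick $t_k\downarrow0,\ w_k\to w$ realizing the $\liminf$ in \eqref{indicator}, so $\bar z+t_kw_k\in S$ and $-2\langle\bar v,w_k\rangle/t_k\to\mathrm{d}^2\delta_S(\bar z;\bar v)(w)$, and then invoke parabolic regularity of $S$ at $\bar z$ for $\bar v$ to replace this by a realizing sequence with $\limsup_k\|w_k-w\|/t_k<\infty$. Writing $w_k=w+\tfrac{t_k}{2}\nu_k$, the $\nu_k$ are then bounded and, along a subsequence, $\nu_k\to\nu$; since $\bar z+t_kw+\tfrac12 t_k^2\nu_k=\bar z+t_kw_k\in S$, the definition of the outer second-order tangent set gives $\nu\in T_S^2(\bar z;w)$, and, using $\langle\bar v,w\rangle=0$,
\[
\frac{-2\langle\bar v,w_k\rangle}{t_k}=\frac{-2\langle\bar v,w\rangle}{t_k}-\langle\bar v,\nu_k\rangle=-\langle\bar v,\nu_k\rangle\ \longrightarrow\ -\langle\bar v,\nu\rangle\ \ge\ -\sigma_{T_S^2(\bar z;w)}(\bar v).
\]
Combined with the easy inequality this gives $\mathrm{d}^2\delta_S(\bar z;\bar v)(w)=-\sigma_{T_S^2(\bar z;w)}(\bar v)$, finite by the preliminary observation. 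Since $\mathrm{d}^2\delta_S(\bar z;\bar v)$ equals this finite quantity on $T_S(\bar z)\cap\{\bar v\}^{\perp}$ and equals $+\infty$ off that set (part (i) of the Benko--Gfrerer proposition), it never takes the value $-\infty$ and is finite somewhere, hence proper.

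\emph{Twice epi-differentiability, and the main obstacle.} Given any $t_k\downarrow0$ and $w\in T_S(\bar z)\cap\{\bar v\}^{\perp}$, I would construct the recovery sequence from parabolic derivability: using finiteness of $\sigma_{T_S^2(\bar z;w)}(\bar v)$, choose $\nu^{*}\in T_S^2(\bar z;w)$ with $\langle\bar v,\nu^{*}\rangle=\sigma_{T_S^2(\bar z;w)}(\bar v)$ — or, if the supremum is not attained, within $1/j$ of it — and let $\xi$ be an arc with $\xi(0)=\bar z,\ \xi_{++}^{\prime}(0)=w,\ \xi_{++}^{\prime\prime}(0)=\nu^{*}$. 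Then $w_k:=(\xi(t_k)-\bar z)/t_k=w+\tfrac{t_k}{2}\nu^{*}+o(t_k)\to w$, $\bar z+t_kw_k=\xi(t_k)\in S$, and the difference quotient $-2\langle\bar v,w_k\rangle/t_k\to-\langle\bar v,\nu^{*}\rangle$; taking $\nu^{*}$ maximizing, together with a diagonal argument over $j$ when the supremum is not attained, yields $w_k\to w$ along which the quotient converges to $-\sigma_{T_S^2(\bar z;w)}(\bar v)=\mathrm{d}^2\delta_S(\bar z;\bar v)(w)$, as the definition of twice epi-differentiability demands; for $w\notin T_S(\bar z)\cap\{\bar v\}^{\perp}$ the value is $+\infty$ (part (i) of the Benko--Gfrerer proposition) and any $w_k\to w$ works. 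The main obstacle is the substantive inequality — specifically, the step that replaces an arbitrary realizing sequence by one with $\|w_k-w\|/t_k$ bounded, which is exactly the content of the parabolic regularity hypothesis — with secondary care needed for the proximal-normal estimate that keeps $\sigma_{T_S^2(\bar z;w)}(\bar v)$ finite and for the diagonalization when that support function is not attained.
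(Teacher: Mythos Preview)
The paper does not prove this proposition; it is quoted verbatim from \cite[Theorems~3.3 and~3.6]{ABM21} without argument. Your sketch is essentially the proof given in that reference --- the two opposing inequalities (one from the Benko--Gfrerer bound, one from parabolic regularity via a bounded-ratio realizing sequence), the proximal-normal estimate for finiteness of the support function, and the arc-based recovery sequence from parabolic derivability for epi-convergence --- and it is correct.
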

The following sets are parabolically derivable at $\bar{z}$ for every vector $w \in T_S({\bar z}) \cap \{\bar v\}^{\perp}$ and are parabolically regular at $\bar{z}$ for any $\bar v \in N_{S}^{p}(\bar{z})$: the convex polyhedral set \cite[Example 3.4]{ABM21}, the disjunctive set \cite[Remark 3.5]{ABM21}, the second-order cone \cite[Example 5.8]{ABM21}, and the cone of positive semidefinite symmetric matrices \cite[Theorem 6.2]{ABM21}.

Now, we consider the constraint system
\begin{equation}\label{constraints} S=\{z \in \mathbb{R}^r|g(z)\in \Sigma\},
\end{equation}
where $g:\mathbb{R}^r \rightarrow \mathbb{R}^{q}$ and  $\Sigma \subseteq \mathbb{R}^q$  is closed.

\begin{definition}[metric subregularity constraint qualification]
Let $\bar z\in S$ where $S$ is the constraint system defined by (\ref{constraints}). We say that the metric subregularity constraint qualification (MSCQ)  for  $S$ holds at  $\bar z$ if there exist a neighborhood $U$ of $\bar z$ and a constant $\rho>0$ such that
$${\rm dist}(z,S) \leq \rho\ {\rm dist}(g(z),\Sigma) \qquad \forall z\in U.$$
\end{definition}
Sufficient conditions for MSCQ of the equality and inequality system can be found in \cite[Theorem 7.4]{YeZhou18}, e.g., the first-order sufficient condition for metric subregularity (FOSCMS), the second-order sufficient condition for metric subregularity (SOSCMS), the Mangasarian-Fromovitz constraint qualification (MFCQ), and the linear constraint qualification, i.e., $g$ is affine and $\Sigma$ is the union of finitely many polyhedral convex sets.

Under the MSCQ, the parabolic properties of the set $S$ are guaranteed under the parabolic properties of the set $\Sigma$ provided that $\Sigma$ is convex.  Note that a convex set satisfying the following conditions (i)-(iii) with $g(z)=z$ includes a polyhedral convex set, the second-order cone and a  semidefinite matrix cone.

\begin{proposition}[\text{\cite[Theorems 4.5 and 5.6, Propositions 4.2 and 5.2]{ABM21}}]\label{system}
Let $\bar z\in S$ where $S$ is the constraint system defined by (\ref{constraints}) and $\bar v \in N_S(\bar z)$. Suppose that
$g$ is twice continuously differentiable, the MSCQ holds for $S$ at $\bar z$, and
\begin{itemize}
\item[(i)] $\Sigma$ is convex,
\item[(ii)] $\Sigma$ is parabolically derivable at $g(\bar z)$ for all vectors $\nabla g(\bar z)\eta $ satisfying $\nabla g(\bar z) \eta \in T_\Sigma(g(\bar z)) $ and $\eta\in \{\bar v\}^{\perp}$,
\item[(iii)] $\Sigma$ is parabolically regular at $g(\bar z)$ for every $\lambda \in \Lambda(\bar z,\bar v):=\{\lambda \in N_{\Sigma}(g(\bar z))|\nabla g(\bar z)\lambda =\bar v\}$.
\end{itemize}
Then, the set $S$ is parabolically regular at $\bar z$ for $\bar v$ and is parabolically derivable at $\bar z$ for all vectors  $w\in T_S(\bar z) \cap \{\bar v\}^\perp$. Moreover, $N_{S}^{p}(\bar z)=\widehat{N}_{S}(\bar z)=N_{S}(\bar z)$,
\begin{equation}\label{linearizedcone1}
T_S(\bar z)=\{w \in \mathbb{R}^r|\nabla g(\bar z)w \in T_{\Sigma}(g(\bar z))\},
\end{equation}
and for any $w\in T_S(\bar z) \cap \{\bar v\}^\perp$, the second subderivative is finite and
\begin{equation}\label{2ndsub-chain}
{\rm d}^2 \delta_{S}(\bar z;\bar v)(w) = \max_{\lambda \in \Lambda(\bar z,\bar v)} \{ \langle \lambda, \nabla^2 g(\bar z)(w,w) \rangle + {\rm d}^2 \delta_{\Sigma}(g(\bar z),\lambda)(\nabla g(\bar z)w)\}.
\end{equation}
\end{proposition}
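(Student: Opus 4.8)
The plan is to run the transfer machinery of \cite{ABM21}: the metric subregularity constraint qualification (MSCQ) is the single device that pushes the first- and second-order variational structure of $\Sigma$ through the $C^2$ map $g$ onto $S=\{z\,|\,g(z)\in\Sigma\}$, while convexity of $\Sigma$ (together with the parabolic hypotheses (i)--(iii)) is what makes every attendant dual computation tight. I would organize the argument in four blocks. In the first, the \emph{first-order objects}: record the tangent cone chain rule $T_S(\bar z)=\{w\,|\,\nabla g(\bar z)w\in T_\Sigma(g(\bar z))\}$ and the normal cone chain rule $N_S(\bar z)=\nabla g(\bar z)^TN_\Sigma(g(\bar z))$, both standard under MSCQ (the nontrivial inclusions using the estimate ${\rm dist}(z,S)\le\rho\,{\rm dist}(g(z),\Sigma)$ to pull a feasible point of $\Sigma$ back into $S$ with first-order error). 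Since $\Sigma$ is convex it is Clarke regular, so $N^p_\Sigma(g(\bar z))=\widehat N_\Sigma(g(\bar z))=N_\Sigma(g(\bar z))$; combining the always-valid inclusions $\nabla g(\bar z)^T\widehat N_\Sigma(g(\bar z))\subseteq\widehat N_S(\bar z)\subseteq N_S(\bar z)$ with the normal cone chain rule forces $\widehat N_S(\bar z)=N_S(\bar z)$, and a short argument with the second-order Taylor expansion of $g$ and convexity of $\Sigma$ (which yields $\langle\lambda,g(z)-g(\bar z)\rangle\le 0$ for $z\in S$ and $\lambda\in N_\Sigma(g(\bar z))$) upgrades this to $N^p_S(\bar z)=\widehat N_S(\bar z)=N_S(\bar z)$. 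In particular $\bar v\in N^p_S(\bar z)$, which is exactly what Proposition \ref{Tm3.6ABM} requires, and $\Lambda(\bar z,\bar v)$ is nonempty and, by MSCQ, bounded hence compact. This already gives (\ref{linearizedcone1}) and the normal cone equalities.

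Second, \emph{second-order tangent sets and parabolic derivability}. I would prove, for $w\in T_S(\bar z)$, the second-order chain rule $T^2_S(\bar z;w)=\{\nu\,|\,\nabla g(\bar z)\nu+\nabla^2 g(\bar z)(w,w)\in T^2_\Sigma(g(\bar z);\nabla g(\bar z)w)\}$: the inclusion ``$\subseteq$'' is a direct second-order Taylor expansion of $g$ along a parabolic sequence in $S$, and ``$\supseteq$'' corrects a parabolic sequence realizing an element of $T^2_\Sigma$ back into $S$ via the subregularity estimate, the $o(t^2)$-error being absorbed into the $\tfrac12 t^2\nu_k$ term. Then for $w\in T_S(\bar z)\cap\{\bar v\}^{\perp}$ and $\nu\in T^2_S(\bar z;w)$, hypothesis (ii) applies (because $\nabla g(\bar z)w\in T_\Sigma(g(\bar z))$ and, writing $\bar v=\nabla g(\bar z)^T\lambda$ with $\lambda\in\Lambda(\bar z,\bar v)$, the orthogonality passes through $g$), supplying an arc in $\Sigma$ realizing $\nabla g(\bar z)\nu+\nabla^2 g(\bar z)(w,w)$; composing this with a metric-subregularity projection of the curve $t\mapsto\bar z+tw+\tfrac12 t^2\nu$ onto $S$ produces an arc $\xi$ in $S$ with $\xi(0)=\bar z$, $\xi'_{++}(0)=w$, $\xi''_{++}(0)=\nu$, i.e.\ $S$ is parabolically derivable at $\bar z$ for $w$.

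Third, \emph{parabolic regularity} --- I expect this block to carry essentially all the technical weight and to be the main obstacle. Given $w$ with $\mathrm d^2\delta_S(\bar z;\bar v)(w)<\infty$ (so $w\in T_S(\bar z)$ and $\langle\bar v,w\rangle=0$) and any $t_k\downarrow0$, $w_k\to w$ along which the difference quotient converges to $\mathrm d^2\delta_S(\bar z;\bar v)(w)$, the points $z_k:=\bar z+t_kw_k$ lie in $S$, so $g(z_k)\in\Sigma$ and $g(z_k)=g(\bar z)+t_k\nabla g(\bar z)w_k+\tfrac12 t_k^2(\nabla^2 g(\bar z)(w,w)+o(1))$. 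Parabolic regularity of $\Sigma$ for each $\lambda\in\Lambda(\bar z,\bar v)$ (hypothesis (iii)) lets me select a recovery sequence in $\Sigma$-space with bounded ratio $\limsup\|\cdot\|/t_k<\infty$; the \emph{quantitative} subregularity estimate then converts this into a sequence $\widetilde w_k\to w$ with $\bar z+t_k\widetilde w_k\in S$ and $\limsup\|\widetilde w_k-w\|/t_k<\infty$, while $\langle\bar v,\widetilde w_k-w_k\rangle=o(t_k^2)$ (using $\bar v=\nabla g(\bar z)^T\lambda$, $\lambda\in N_\Sigma(g(\bar z))$, and convexity of $\Sigma$) guarantees the limiting difference quotient is unchanged. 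Convexity of $\Sigma$ is precisely what legitimizes the recovery-sequence selection and the accompanying sup--inf interchange.

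Fourth, \emph{the formula}. With $S$ parabolically regular at $\bar z$ for $\bar v\in N^p_S(\bar z)$ and parabolically derivable at $\bar z$ for every $w\in T_S(\bar z)\cap\{\bar v\}^{\perp}$, Proposition \ref{Tm3.6ABM} gives $\mathrm d^2\delta_S(\bar z;\bar v)(w)=-\sigma_{T^2_S(\bar z;w)}(\bar v)$, finite. Substituting the second-order chain rule from the second block and computing the support function of the affine preimage set by Lagrangian duality, $\sigma_{T^2_S(\bar z;w)}(\bar v)=\inf_{\lambda\in\Lambda(\bar z,\bar v)}\{-\langle\lambda,\nabla^2 g(\bar z)(w,w)\rangle+\sigma_{T^2_\Sigma(g(\bar z);\nabla g(\bar z)w)}(\lambda)\}$ with tightness coming from convexity of $\Sigma$ and compactness of $\Lambda(\bar z,\bar v)$, and rewriting $\sigma_{T^2_\Sigma(g(\bar z);\nabla g(\bar z)w)}(\lambda)=-\mathrm d^2\delta_\Sigma(g(\bar z),\lambda)(\nabla g(\bar z)w)$ via Proposition \ref{Tm3.6ABM} applied to $\Sigma$ (which is parabolically regular and derivable by (i)--(iii)), I obtain exactly (\ref{2ndsub-chain}). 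Throughout, the $C^2$ assumption on $g$ is used to keep all second-order Taylor expansions exact, and MSCQ is the common engine of all four transfers.
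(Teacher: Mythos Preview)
The paper does not supply its own proof of this proposition: it is quoted verbatim from \cite[Theorems 4.5 and 5.6, Propositions 4.2 and 5.2]{ABM21} and used as a black box thereafter. Your four-block outline is a faithful reconstruction of the \cite{ABM21} argument --- MSCQ transfers the first- and second-order tangent structure from $\Sigma$ to $S$, convexity of $\Sigma$ gives the normal-cone regularity $N^p_S=\widehat N_S=N_S$, the parabolic hypotheses (ii)--(iii) push derivability and regularity through the $C^2$ map, and Proposition~\ref{Tm3.6ABM} together with Lagrangian duality over the compact multiplier set $\Lambda(\bar z,\bar v)$ yields (\ref{2ndsub-chain}) --- so there is nothing to compare.

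One minor slip in your block three: to keep the limiting difference quotient unchanged when passing from $w_k$ to $\widetilde w_k$ you need $\langle\bar v,\widetilde w_k-w_k\rangle=o(t_k)$, not $o(t_k^2)$, since the quotient in question is $-2\langle\bar v,w_k\rangle/t_k$; the weaker estimate is what the subregularity-plus-convexity argument actually delivers and is all that is required.
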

Let $\bar z \in S$, $\Sigma:=\mathbb{R}^p_-\times \{0\}^q$, and suppose that the MSCQ holds for $S$ at $\bar z$. We have then that
\begin{equation}\label{linearizedcone}
T_S(\bar z)=\left\{w \in \mathbb{R}^r: \begin{array}{l}
\nabla g_i(\bar z)^T w=0, i=1, \ldots, q, \\
\nabla g_i(\bar z)^T w \leq 0, i \in I(\bar z)
\end{array}\right\},
\end{equation}
where
$$
I(\bar z):=\left\{i: g_i(\bar z)=0, i=1, \ldots, p\right\}
$$
denotes the set of inequality constraints active at $\bar z$.
For $w \in T_S(\bar z)$,
\begin{equation}\label{2ndlinearizedcone}
T_S^2(\bar z,w)=\left\{\nu \in \mathbb{R}^r: \begin{array}{l}
\nabla g_i(\bar z)^T \nu+w^T\nabla^2 g_i(\bar z)w=0, i=1, \ldots, q, \\
\nabla g_i(\bar z)^T \nu+w^T\nabla^2 g_i(\bar z)w \leq 0, i \in I_1(\bar z,w)
\end{array}\right\},
\end{equation}
where
$$
I_1(\bar z,w):=\left\{i\in I(\bar z): \nabla g_i(\bar z)^Tw=0\right\}.
$$
Let $S$ be convex polyhedral. Consider $\bar z \in S$, $\bar v \in N_{S}(\bar{z})$, and $w \in T_S(\bar z)$. By \cite[Exercise 13.17]{RoWe98}, we have ${\rm d}^2 \delta_{S}(\bar z;\bar v)(w)=0$ for any $w \in T_S({\bar z}) \cap \{\bar v\}^{\perp}$.
By \cite[Proposition 13.12]{RoWe98},
\begin{equation}\label{poly-2ndtangent}
T_S^2(\bar z;w)=T_{T_S(\bar z)}(w)=T_S(\bar z)+\mathbb{R}w.
\end{equation}

When the set $S$ has the parabolic properties, the second subderivative of the indicator function $\delta_S$ can be given.

\begin{proposition}[calculation of the second subderivative of the indicator function]\label{Calculation}
Given $S \subseteq \mathbb{R}^r$, $\bar z \in S$, and $\bar v \in N_{S}(\bar{z})$.
\begin{itemize}
\item[(i)] Suppose that $S=\mathbb{R}^r$. Then,
\begin{equation*}\label{Calfor2nd-uncons}
{\rm d}^2 \delta_{S}(\bar z;\bar v)(w)=\begin{cases}0 & \text { if } w \in \{\bar v\}^{\perp}, \\ \infty & \text { otherwise. }\end{cases}
\end{equation*}
\item[(ii)] Suppose that $S$ is convex polyheral. Then,
\begin{equation*}\label{Calfor2nd}
{\rm d}^2 \delta_{S}(\bar z;\bar v)(w)=\begin{cases}0 & \text { if } w \in T_S({\bar z}) \cap \{\bar v\}^{\perp}, \\ \infty & \text { otherwise. }\end{cases}
\end{equation*}
\item[(iii)] Suppose that $S:=\{z\in \mathbb{R}^r| g(z) \in \Sigma\}$ where $S$ and $\Sigma$ satisfy the conditions in Proposition \ref{system}. Then, for any $w \in T_S({\bar z}) \cap \{\bar v\}^{\perp}$, $\mathrm{d}^{2} \delta_{S}(\bar{z}; \bar v)(w)$ is finite and
\begin{eqnarray*}
\lefteqn{
\mathrm{d}^{2} \delta_{S}(\bar{z}; \bar v)(w)}\\
&& = \begin{cases}\max_{\lambda \in \Lambda(\bar z,\bar v)} \left \{ \langle \lambda, \nabla^2 g(\bar z)(w,w) \rangle -\sigma_{T_{\Sigma}^{2}(g(\bar z),\nabla g(\bar z)w)}(\lambda) \right \} & \text { if } w \in T_S({\bar z}) \cap \{\bar v\}^{\perp}, \\ \infty & \text { otherwise. }\end{cases}
\end{eqnarray*}
Here, the tangent cone $T_S({\bar z})$ can be characterized by \eqref{linearizedcone1}.
\end{itemize}
\end{proposition}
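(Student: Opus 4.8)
The three items are stated in order of increasing generality, and the plan is to prove them in that order while reusing a single observation that handles the ``otherwise'' branch in all three. I would first dispose of that branch uniformly: by the result of \cite{BenkoGfrerer2022} recalled above, ${\rm d}^2\delta_S(\bar z;\bar v)(w)=\infty$ whenever $w\notin T_S(\bar z)$ or $\langle\bar v,w\rangle<0$. In each of the three settings $\bar v$ lies in the regular normal cone $\widehat N_S(\bar z)$ --- for (i) and (ii) because $S$ is convex, for (iii) because Proposition \ref{system} yields $N_S(\bar z)=\widehat N_S(\bar z)$ --- so by \eqref{normalcone-equ} we have $\langle\bar v,w\rangle\le 0$ for all $w\in T_S(\bar z)$. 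Consequently $w\notin T_S(\bar z)\cap\{\bar v\}^\perp$ forces $w\notin T_S(\bar z)$ or $\langle\bar v,w\rangle<0$, and then ${\rm d}^2\delta_S(\bar z;\bar v)(w)=\infty$; this is precisely the ``otherwise'' line of (i)--(iii). For (i) it remains only to note that $S=\mathbb{R}^r$ forces $\bar v=0$ (as $N_{\mathbb{R}^r}(\bar z)=\{0\}$), hence $\{\bar v\}^\perp=\mathbb{R}^r$, $\delta_S\equiv 0$, and the difference quotient defining ${\rm d}^2\delta_S(\bar z;\bar v)(w)$ is identically $0$, so the value is $0$ for every $w$; equivalently, (i) is the degenerate case $S=\mathbb{R}^r$ of (ii).

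For (ii), with $S$ convex polyhedral and $w\in T_S(\bar z)\cap\{\bar v\}^\perp$, I would invoke \cite[Exercise 13.17]{RoWe98}, which asserts exactly that ${\rm d}^2\delta_S(\bar z;\bar v)(w)=0$ in that case (this is already quoted just before the proposition). Together with the first step this proves (ii). An alternative route through (iii) would use \eqref{poly-2ndtangent}, $T_S^2(\bar z;w)=T_S(\bar z)+\mathbb{R}w$, together with the fact that $\bar v$ annihilates the cone $T_S(\bar z)$ and $\langle\bar v,w\rangle=0$, which gives $\sigma_{T_S^2(\bar z;w)}(\bar v)=0$.

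The work is in (iii). The plan is to begin from the chain rule \eqref{2ndsub-chain} of Proposition \ref{system}, ${\rm d}^2\delta_S(\bar z;\bar v)(w)=\max_{\lambda\in\Lambda(\bar z,\bar v)}\{\langle\lambda,\nabla^2 g(\bar z)(w,w)\rangle+{\rm d}^2\delta_\Sigma(g(\bar z);\lambda)(\nabla g(\bar z)w)\}$, which the same proposition guarantees is finite on $T_S(\bar z)\cap\{\bar v\}^\perp$, and then to replace each inner term by applying Proposition \ref{Tm3.6ABM} to the set $\Sigma$ at the point $g(\bar z)$ for the normal vector $\lambda$, obtaining ${\rm d}^2\delta_\Sigma(g(\bar z);\lambda)(\nabla g(\bar z)w)=-\sigma_{T_\Sigma^2(g(\bar z);\nabla g(\bar z)w)}(\lambda)$. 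To license this I would check the hypotheses of Proposition \ref{Tm3.6ABM} for $\Sigma$: closedness and $g(\bar z)\in\Sigma$ are immediate from $\bar z\in S$; $\lambda\in N_\Sigma^p(g(\bar z))$ because $\lambda\in N_\Sigma(g(\bar z))$ by definition of $\Lambda(\bar z,\bar v)$ and $N_\Sigma^p=\widehat N_\Sigma=N_\Sigma$ by convexity (condition (i) of Proposition \ref{system}); parabolic regularity of $\Sigma$ at $g(\bar z)$ for $\lambda$ is condition (iii); and parabolic derivability of $\Sigma$ at $g(\bar z)$ along the pertinent directions is supplied by condition (ii), and in the intended scope --- where $\Sigma$ is polyhedral convex, a second-order cone, or a positive semidefinite cone --- by the facts recorded after Proposition \ref{Tm3.6ABM}. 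I would also verify that the direction $\nu:=\nabla g(\bar z)w$ lies in $T_\Sigma(g(\bar z))\cap\{\lambda\}^\perp$: $\nu\in T_\Sigma(g(\bar z))$ since $w\in T_S(\bar z)$ and $T_S(\bar z)=\{w\mid\nabla g(\bar z)w\in T_\Sigma(g(\bar z))\}$ by \eqref{linearizedcone1}, while $\langle\lambda,\nu\rangle=\langle\lambda,\nabla g(\bar z)w\rangle=\langle\nabla g(\bar z)\lambda,w\rangle=\langle\bar v,w\rangle=0$ because $\lambda\in\Lambda(\bar z,\bar v)$ and $w\perp\bar v$. Substituting the resulting identity into \eqref{2ndsub-chain} gives the asserted formula, with finiteness carried over from Proposition \ref{system}, and $T_S(\bar z)$ characterized by \eqref{linearizedcone1} as part of that proposition's conclusion.

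I expect the main obstacle to be lining up the parabolic-derivability hypotheses: Proposition \ref{system}'s condition (ii) guarantees parabolic derivability of $\Sigma$ only along vectors of the form $\nabla g(\bar z)\eta$ with $\eta\perp\bar v$, whereas Proposition \ref{Tm3.6ABM} is phrased for all directions in $T_\Sigma(g(\bar z))\cap\{\lambda\}^\perp$. Since the direction $\nu=\nabla g(\bar z)w$ that actually enters the computation is itself of that form with $\eta=w\perp\bar v$, one either restricts (iii) to the structured sets $\Sigma$ listed after Proposition \ref{Tm3.6ABM} --- for which parabolic derivability and regularity hold outright, which is exactly the setting the paper has in mind --- or uses a directionwise version of Proposition \ref{Tm3.6ABM} specialized to $\nu$. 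The remaining ingredients, namely the two normal-cone inequalities, the adjoint identity $\langle\lambda,\nabla g(\bar z)w\rangle=\langle\bar v,w\rangle$, and the polyhedral support-function computation, are routine.
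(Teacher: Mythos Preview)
Your approach is essentially the same as the paper's: the paper dispatches (i) and (ii) by a bare citation of \cite[Exercise 13.17]{RoWe98} and obtains (iii) by combining Proposition \ref{Tm3.6ABM} (applied to $\Sigma$) with the chain rule \eqref{2ndsub-chain}, exactly as you do, only without any of the verification of hypotheses that you supply. Your flagged concern about lining up the parabolic-derivability hypotheses of Proposition \ref{system}(ii) with those of Proposition \ref{Tm3.6ABM} is legitimate and is simply not addressed by the paper's one-line proof; your proposed resolution --- falling back on the structured classes of $\Sigma$ listed after Proposition \ref{Tm3.6ABM}, which is the intended scope --- is the right one.
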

\begin{proof}
(i) and (ii) can be derived by \cite[Exercise 13.17]{RoWe98}. (iii) follows from Proposition \ref{Tm3.6ABM} and \eqref{2ndsub-chain}.
\end{proof}

\subsection{Optimality conditions for nonsmooth optimization problems}
The following first-order conditions for optimality for unconstrained problems can be found in \cite[Proposition 3.99]{BonSh00}.
 \begin{proposition}[first-order optimality conditions  for unconstrained problems]\label{thm2} Let  $\psi: \mathbb{R}^r \rightarrow \overline{\mathbb{R}}$ and $\bar z$ be a point where $\psi(\bar z )$ is finite. Then:
 \begin{itemize}
 \item[(a)] If $\bar z$ is a local minimum point of the function $\psi$, then
\begin{equation*}
{\rm d} \psi(\bar z)(w) \geq 0\quad \forall w\in \mathbb{R}^r.
\end{equation*}
      \item[(b)] {If ${\rm d} \psi(\bar z)(w) > 0$ for all $w\in \mathbb{R}^r\setminus \{0\}$, then $\bar z$ is a local minimum point of the function $\psi$.}
 \end{itemize}
  \end{proposition}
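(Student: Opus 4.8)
The plan is to establish (a) by directly unwinding the definition of the subderivative ${\rm d}\psi(\bar z)$, and (b) by a contradiction argument that leverages compactness of the unit sphere in $\mathbb{R}^r$.

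For (a), I would start from the fact that a local minimum provides a neighborhood $U$ of $\bar z$ on which $\psi(z) \ge \psi(\bar z)$. Fixing an arbitrary direction $w\in\mathbb{R}^r$, for every pair of sequences $t_k \downarrow 0$ and $w_k \to w$ the points $\bar z + t_k w_k$ eventually lie in $U$, so $\psi(\bar z + t_k w_k) - \psi(\bar z) \ge 0$ for all large $k$ (this stays valid even when the left-hand term equals $+\infty$, which is the only extended-real-valued subtlety). Dividing by $t_k > 0$ and taking the liminf over all such sequences --- which is exactly ${\rm d}\psi(\bar z)(w)$ --- yields ${\rm d}\psi(\bar z)(w) \ge 0$.

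For (b), I would argue by contradiction: if $\bar z$ were not a local minimum, there would exist a sequence $z_k \to \bar z$ with $z_k \ne \bar z$ and $\psi(z_k) < \psi(\bar z)$. Setting $t_k := \|z_k - \bar z\| \downarrow 0$ and $w_k := (z_k - \bar z)/t_k$ produces unit vectors $w_k$; by compactness of the unit sphere, a subsequence converges to some $w$ with $\|w\| = 1$, hence $w \ne 0$. Along this subsequence $\bar z + t_k w_k = z_k$, so the difference quotient $\bigl(\psi(\bar z + t_k w_k) - \psi(\bar z)\bigr)/t_k$ equals $\bigl(\psi(z_k) - \psi(\bar z)\bigr)/t_k < 0$. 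Since ${\rm d}\psi(\bar z)(w)$ is a liminf over all admissible sequences $t\downarrow 0,\ w'\to w$, it is bounded above by the liminf along this particular one, which is $\le 0$; this contradicts the hypothesis ${\rm d}\psi(\bar z)(w) > 0$ for $w \ne 0$.

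I do not expect a genuine obstacle here; the statement is elementary once the right sequences are chosen. The only delicate point is in (b): one must normalize the displacement $z_k - \bar z$ before extracting a convergent subsequence, since this is precisely what guarantees the limiting direction $w$ is nonzero and therefore lets the strict positivity hypothesis apply. A minor bookkeeping item is that $\psi$ is extended-real-valued, but because $\psi(\bar z)$ is finite this causes no trouble: in (a) the inequality survives the value $+\infty$, and in (b) the chosen points automatically satisfy $\psi(z_k) < +\infty$, so no further case analysis is needed.
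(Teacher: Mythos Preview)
Your proof is correct and is the standard direct argument for this elementary result. The paper does not supply its own proof of this proposition; it simply cites \cite[Proposition~3.99]{BonSh00} and states the result without argument, so there is nothing to compare against beyond noting that your approach is exactly the kind of self-contained verification one would expect.
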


The following result was given in \cite[Proposition 3.100]{BonSh00}; see also  \cite[Theorem 13.24]{RoWe98}.

 \begin{proposition}[second-order optimality conditions for unconstrained problems]\label{thm1} Let  $\psi: \mathbb{R}^r \rightarrow \overline{\mathbb{R}}$ and $\bar z$ be a point where $\psi(\bar z )$ is finite. Then:
 \begin{itemize}
 \item[(a)] If $\bar z$ is a local minimum point of the function $\psi$, then
\begin{equation*}
{\rm d}^2 \psi(\bar z;0)(w)\geq 0\ \ \forall w\in \mathbb{R}^r.
\end{equation*}
      \item[(b)] {The condition ${\rm d}^2 \psi(\bar z;0)(w) > 0$ for all $w\in \mathbb{R}^r\setminus \{0\}$ holds if and only if the second-order growth condition holds at $\bar z$, i.e.,  there exist
     $  \varepsilon>0$ and $\eta>0$ such that
\begin{equation*}
\psi(z)\geq \psi(\bar z)+\varepsilon\|z-\bar z\|^2 \ \ \mbox{ when } \|z-\bar z\|\leq \eta,
\end{equation*}
which implies that $\bar z$ is a local minimum point of the function $\psi$.} \end{itemize}
  \end{proposition}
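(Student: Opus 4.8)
The plan is to prove parts (a) and (b) directly from the definition
${\rm d}^2\psi(\bar z;0)(w) = \liminf_{t\downarrow 0,\,w'\to w}\big(\psi(\bar z+tw')-\psi(\bar z)\big)/(\tfrac{1}{2}t^2)$, using nothing more than the local‑minimum/growth hypotheses and a compactness argument on the unit sphere. (This is the classical statement from \cite[Proposition 3.100]{BonSh00} and \cite[Theorem 13.24]{RoWe98}; I record a self‑contained sketch.)

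For part (a): assuming $\bar z$ is a local minimizer, pick a neighborhood $U$ of $\bar z$ on which $\psi\ge\psi(\bar z)$. Fix $w$; for any sequences $t_k\downarrow 0$ and $w_k\to w$ we have $\bar z+t_kw_k\to\bar z$, so $\bar z+t_kw_k\in U$ eventually and the difference quotient $\big(\psi(\bar z+t_kw_k)-\psi(\bar z)\big)/(\tfrac{1}{2}t_k^2)\ge 0$; passing to the liminf gives ${\rm d}^2\psi(\bar z;0)(w)\ge 0$. For the ``if'' direction of (b): assuming the second‑order growth condition with constants $\varepsilon,\eta>0$, fix $w\ne 0$; for $t_k\downarrow 0$, $w_k\to w$ we have $\|t_kw_k\|\le\eta$ eventually, hence $\psi(\bar z+t_kw_k)-\psi(\bar z)\ge\varepsilon t_k^2\|w_k\|^2$ and the quotient is $\ge 2\varepsilon\|w_k\|^2\to 2\varepsilon\|w\|^2$, so ${\rm d}^2\psi(\bar z;0)(w)\ge 2\varepsilon\|w\|^2>0$. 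The trailing implication ``growth $\Rightarrow$ local minimum'' is immediate, since growth forces $\psi(z)\ge\psi(\bar z)$ on $\mathbb{B}_\eta(\bar z)$.

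The only genuinely substantive step is the ``only if'' direction of (b), which I would do by contraposition. If the second‑order growth condition fails, then for each $k$ there is $z_k$ with $0<\|z_k-\bar z\|<1/k$ and $\psi(z_k)<\psi(\bar z)+\tfrac{1}{k}\|z_k-\bar z\|^2$. Set $t_k:=\|z_k-\bar z\|\downarrow 0$ and $w_k:=(z_k-\bar z)/t_k$, so $\|w_k\|=1$; by compactness of the unit sphere, after passing to a subsequence, $w_k\to w$ with $\|w\|=1$, in particular $w\ne 0$. Since $z_k=\bar z+t_kw_k$, the quotient satisfies $\big(\psi(\bar z+t_kw_k)-\psi(\bar z)\big)/(\tfrac12 t_k^2)<(1/k)t_k^2/(\tfrac12 t_k^2)=2/k\to 0$, and because $(t_k,w_k)\to(0,w)$ is an admissible sequence in the liminf defining the second subderivative, we get ${\rm d}^2\psi(\bar z;0)(w)\le 0$, contradicting the hypothesis that it is positive on $\mathbb{R}^r\setminus\{0\}$.

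I expect the main (minor) obstacle to be bookkeeping around the liminf: one must confirm that the normalized displacement sequence $(t_k,w_k)$ produced in the contradiction argument really is of the form allowed in $\liminf_{t\downarrow0,\,w'\to w}$, and dually that the bound obtained in the ``if'' direction survives passing to the liminf over \emph{all} such sequences. No regularity of $\psi$ (semidifferentiability, Lipschitzness, etc.) is needed, and the value $+\infty$ causes no trouble since near a local minimizer the relevant differences are $\ge 0$; the case $\psi\equiv-\infty$ nearby is excluded by the finiteness of $\psi(\bar z)$ together with the minimum/growth hypotheses.
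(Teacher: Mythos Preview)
The paper does not give its own proof of this proposition; it simply cites \cite[Proposition 3.100]{BonSh00} and \cite[Theorem 13.24]{RoWe98} and moves on. Your self-contained sketch is correct and is essentially the standard argument one finds in those references: part (a) and the ``if'' direction of (b) follow straight from the definition of the second subderivative, and the ``only if'' direction of (b) is the usual contraposition via a normalized sequence $w_k=(z_k-\bar z)/\|z_k-\bar z\|$ on the unit sphere plus compactness. The bookkeeping you flag (that the sequence $(t_k,w_k)$ is admissible in the liminf, and that $z_k\neq\bar z$ automatically since $\psi(\bar z)<\psi(\bar z)$ is impossible) is handled correctly. One cosmetic point: in the contraposition you write $0<\|z_k-\bar z\|<1/k$, but the negation of the growth condition only gives $\|z_k-\bar z\|\le 1/k$; this is harmless since strictness on the right is irrelevant and strictness on the left is forced as you note.
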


{Based on the aforementioned results, we can obtain optimality conditions for the constrained problems, which are essential for analyzing the optimality conditions of the minimax problem.}

\begin{proposition}[first-order optimality conditions for constrained problems]\label{non-optimality}
Let $\psi: \mathbb{R}^r \to \mathbb{R}$, $S \subseteq \mathbb{R}^r$ be nonempty and closed and $\bar z \in S$.
\begin{itemize}
\item[(i)]Suppose that $\psi$ is either Lipschitz continuous around $\bar z$  or  semidifferentiable at $\bar z$. If $\bar z$ is a local minimizer of $\psi$ on $S$, then ${\rm d} \psi (\bar z)(w) \geq 0$ for any $w \in T_{S}(\bar z)$.
\item[(ii)]{If ${\rm d}\psi (\bar z)(w) > 0$ for any $w \in T_{S}(\bar z) \setminus \{0\}$, then $\bar z$ is a local minimizer of $\psi$ on $S$.}
\end{itemize}
\end{proposition}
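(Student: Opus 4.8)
The plan is to reduce the constrained problem $\min_{z\in S}\psi(z)$ to the \emph{unconstrained} minimization of the extended-real-valued function $\tilde\psi:=\psi+\delta_S$ over $\mathbb{R}^r$ and then invoke the unconstrained first-order criteria of Proposition \ref{thm2}. Everything rests on two elementary facts about the indicator term. First, ${\rm d}\delta_S(\bar z)(w)=0$ whenever $w\in T_S(\bar z)$ and ${\rm d}\delta_S(\bar z)(w)=+\infty$ whenever $w\notin T_S(\bar z)$: the former because along a sequence $t_k\downarrow 0$, $w_k\to w$ with $\bar z+t_kw_k\in S$ (which exists by $w\in T_S(\bar z)$) the difference quotients $\delta_S(\bar z+t_kw_k)/t_k$ vanish while $\delta_S\ge 0=\delta_S(\bar z)$, the latter because $w\notin T_S(\bar z)$ forces $\bar z+tw'\notin S$ for all small $t>0$ and all $w'$ near $w$, so the corresponding difference quotients are eventually $+\infty$. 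Second, the lower limit is superadditive, so ${\rm d}\tilde\psi(\bar z)(w)\ge{\rm d}\psi(\bar z)(w)+{\rm d}\delta_S(\bar z)(w)$, with equality as soon as one of the two difference quotients actually converges.

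For part (ii) these two facts alone suffice and no extra regularity of $\psi$ is needed. Since $0\in T_S(\bar z)$ (because $\bar z\in S$), for every $w\ne 0$ the hypothesis gives: if $w\in T_S(\bar z)$ then ${\rm d}\tilde\psi(\bar z)(w)\ge{\rm d}\psi(\bar z)(w)+0>0$; if $w\notin T_S(\bar z)$ then ${\rm d}\tilde\psi(\bar z)(w)=+\infty>0$ directly from the eventual $+\infty$ of its difference quotients. Hence ${\rm d}\tilde\psi(\bar z)(w)>0$ for all $w\in\mathbb{R}^r\setminus\{0\}$, and Proposition \ref{thm2}(b) yields that $\bar z$ is a local minimizer of $\tilde\psi$ over $\mathbb{R}^r$, i.e. $\psi(z)\ge\psi(\bar z)$ for all $z\in S$ near $\bar z$.

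For part (i), $\bar z$ being a local minimizer of $\psi$ on $S$ makes it a local minimizer of $\tilde\psi$ over $\mathbb{R}^r$, so Proposition \ref{thm2}(a) yields ${\rm d}\tilde\psi(\bar z)(w)\ge 0$ for all $w$; the task is to turn this, for $w\in T_S(\bar z)$, into ${\rm d}\psi(\bar z)(w)\ge 0$. Since superadditivity already gives ${\rm d}\tilde\psi(\bar z)(w)\ge{\rm d}\psi(\bar z)(w)$ there, what is really needed is the \emph{reverse} inequality ${\rm d}\tilde\psi(\bar z)(w)\le{\rm d}\psi(\bar z)(w)$ on the tangent cone, and this is precisely where — and why — the regularity hypothesis on $\psi$ is imposed; it is the main obstacle. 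If $\psi$ is semidifferentiable at $\bar z$ it is immediate: the difference quotient $[\psi(\bar z+tw')-\psi(\bar z)]/t$ converges to ${\rm d}\psi(\bar z)(w)$ as $t\downarrow 0$, $w'\to w$, so the lower limit defining ${\rm d}\tilde\psi(\bar z)(w)$ splits exactly as ${\rm d}\psi(\bar z)(w)+{\rm d}\delta_S(\bar z)(w)={\rm d}\psi(\bar z)(w)$, giving ${\rm d}\psi(\bar z)(w)\ge 0$. The Lipschitz case is the more delicate of the two: using that for a locally Lipschitz function the subderivative coincides with the lower Dini directional derivative $\liminf_{t\downarrow 0}[\psi(\bar z+tw)-\psi(\bar z)]/t$ (so the perturbation $w'\to w$ is harmless), one argues along a sequence $t_k\downarrow 0$, $w_k\to w$, $\bar z+t_kw_k\in S$ realizing $w\in T_S(\bar z)$, where local optimality gives $[\psi(\bar z+t_kw_k)-\psi(\bar z)]/t_k\ge 0$ for large $k$ and the Lipschitz estimate $\bigl|[\psi(\bar z+t_kw_k)-\psi(\bar z)]/t_k-[\psi(\bar z+t_kw)-\psi(\bar z)]/t_k\bigr|\le L\|w_k-w\|\to 0$ transfers this to the fixed direction $w$, from which ${\rm d}\psi(\bar z)(w)\ge 0$. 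The recurring difficulty is that $\delta_S$ is wildly discontinuous, so the lower limit over $(t,w')$ cannot be broken apart without either the convergence supplied by semidifferentiability or the Dini/Lipschitz structure; once that splitting is available both conclusions drop out of Proposition \ref{thm2}.
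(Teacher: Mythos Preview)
Your treatment of part (ii) and of the semidifferentiable half of part (i) is correct and essentially the paper's own argument: both reduce to the unconstrained minimization of $\psi+\delta_S$ and invoke Proposition~\ref{thm2}, using that for $w\in T_S(\bar z)$ the indicator contributes nothing while for $w\notin T_S(\bar z)$ it contributes $+\infty$. The paper writes the semidifferentiable necessary condition slightly more directly (restricting the defining limit to sequences with $\bar z+tw'\in S$, which is legitimate precisely because the full limit exists), but it is the same computation.

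The Lipschitz-only branch of part (i) is where your argument breaks. You fix \emph{one} tangent sequence $t_k\downarrow 0$, $w_k\to w$ with $\bar z+t_kw_k\in S$, use optimality to get $[\psi(\bar z+t_kw_k)-\psi(\bar z)]/t_k\ge 0$, and transfer via the Lipschitz estimate to conclude $[\psi(\bar z+t_kw)-\psi(\bar z)]/t_k\ge -L\|w_k-w\|\to 0$. But this only controls the difference quotient along the particular scale sequence $(t_k)$; the Dini lower derivative $\liminf_{t\downarrow 0}[\psi(\bar z+tw)-\psi(\bar z)]/t$ is the infimum over \emph{all} $t\downarrow 0$ and can be strictly smaller. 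Concretely, take $r=1$, $S=\{0\}\cup\{2^{-n}:n\ge 1\}$ (so $T_S(0)=[0,\infty)$) and let $\psi$ be the $1$-Lipschitz zigzag with $\psi(2^{-n})=0$ and $\psi(3\cdot 2^{-(n+2)})=-2^{-(n+2)}$: then $0$ minimizes $\psi$ on $S$, yet along $t_n=3\cdot 2^{-(n+2)}$ one has $\psi(t_n)/t_n=-1/3$, so ${\rm d}\psi(0)(1)\le -1/3$. Your transfer step would only ever see the tangent scales $t_k=2^{-k}$ and miss these dips entirely. The paper does not attempt the direct argument you sketch; for the Lipschitz case it instead records $\partial^\infty\psi(\bar z)=\{0\}$ and defers to \cite[Theorem~8.15]{RoWe98}. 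The example above suggests that this branch of the statement needs more than bare Lipschitz continuity, and that no elementary ``one tangent sequence'' argument can close the gap.
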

\begin{proof}
(i) When $\psi$ is Lipschitz continuous around $\bar z$, we have $\partial^{\infty} \psi (\bar z) =\{0\}$ \cite[Theorem 1.22]{M18}. The desired result follows from \cite[Theorem 8.15]{RoWe98}.
Now suppose that $f$ is semidifferentiable at $\bar z \in S$. Then for any $w \in T_S(\bar z)$,
\begin{equation*}
	\begin{aligned}
		{\rm d}\psi(\bar z)(w) &  = \lim\limits_{t \downarrow 0, w' \to w} \frac{\psi(\bar z+tw')-\psi(\bar z)}{t} \\
		& = \lim\limits_{{t \downarrow 0, w' \to w} \atop {\bar z+tw' \in S} } \frac{\psi(\bar z+tw')-\psi(\bar z)}{t} \geq 0.
	\end{aligned}
\end{equation*}

(ii) Given $\psi : \mathbb{R}^r \to \mathbb{R}$ and $\bar z \in S \subseteq \mathbb{R}^r$. For any $w \in T_S(\bar z)$ and $v \in \mathbb{R}^r$,
\begin{eqnarray}\label{dpsi+delta-1}
	\begin{aligned}
		{\rm d}(\psi+\delta_S)(\bar z)(w)  & =  \liminf\limits_{t \downarrow 0, w'\to w} \frac{\psi(\bar z+tw')-\psi(\bar z)+\delta_{S}(\bar{z}+tw^{\prime})}{t}\\
		& =  \liminf\limits_{{t \downarrow 0, w'\to w} \atop {\bar{z}+tw^{\prime} \in S}} \frac{\psi(\bar z+tw')-\psi(\bar z)}{t} \geq {\rm d}\psi (\bar z)(w).
	\end{aligned}
\end{eqnarray}

For any $w \not\in T_S(\bar z)$, \begin{eqnarray}\label{dpsi+delta-2}
	{\rm d}(\psi+\delta_S)(\bar z)(w) = \infty.
\end{eqnarray}

The desired result follows from Theorem \ref{thm2} (b), \eqref{dpsi+delta-1} and \eqref{dpsi+delta-2}.
\end{proof}

We now derive second-order optimality conditions for constrained minimization problems. These kinds of results can be traced back to Penot \cite[Theorems 1.2 and 1.7]{Penot1994}. For the smooth problems, under more conditions on set $S$, second-order optimality conditions with ${\rm d}^2 \delta_S(\bar z;-\nabla\psi(\bar z))(w)$ replaced by the support function of the second-order tangent set $\sigma_{T_S^2(\bar z;w )}(\nabla\psi(\bar z))$ have been established in  Bonnans and Shapiro \cite[Sections 3.2.2 and 3.3.3]{BonSh00}. The term $\sigma_{T_S^2(\bar z;w )}(\nabla\psi(\bar z))$ is referred to a ``sigma'' term and it can not be dismissed in general if the set $S$ has some curvature.
\begin{proposition}[second-order  optimality conditions for constrained problems]\label{non-optimality2}
Let $\psi: \mathbb{R}^r \to \mathbb{R}$, $S \subseteq \mathbb{R}^r$ is nonempty and closed and $\bar z \in \mathbb{R}^r$. Suppose that $\psi$ is twice semidifferentiable at $\bar z$.
\begin{itemize}
\item[(i)] If $\bar z$ is a local minimizer of $\psi$ on $S$, then {${\rm d} \psi(\bar z)(w) \geq 0$ for any $w \in T_S(\bar z)$,} and for any { $w \in T_S(\bar z) \cap \{w'|{\rm d} \psi(\bar z)(w') = 0\} $},
$${\rm d}^2 \psi(\bar z)(w)+{\rm d}^2 \delta_S(\bar z;-{\rm d}\psi(\bar z))(w) \geq 0.$$
\item[(ii)] Suppose that {${\rm d} \psi(\bar z)(w) \geq 0$ for any $w \in T_S(\bar z)$,} and that for any $w \in T_S(\bar z) \cap \{w'|{\rm d} \psi(\bar z)(w') = 0\} \setminus \{0\}$,
\begin{equation}\label{y_localmax-non}
{\rm d}^2 \psi(\bar z)(w) +{\rm d}^2 \delta_S(\bar z;-{\rm d}\psi(\bar z))(w)>0.
\end{equation}
Then, $\bar z$ is a local minimizer of $\psi$ on $S$ {with the second-order growth condition, i.e.,  there exist
     $  \varepsilon>0$ and $\eta>0$ such that
\begin{equation*}
\psi(z)\geq \psi(\bar z)+\varepsilon\|z-\bar z\|^2 \ \ \mbox{ when } z \in S\cap \mathbb{B}_\eta(\bar z).
\end{equation*}}
\end{itemize}
\end{proposition}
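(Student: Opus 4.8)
\emph{Proof plan.} The plan is to derive both parts from the unconstrained second-order optimality conditions of Proposition \ref{thm1}, applied to the extended-real-valued function $\phi:=\psi+\delta_{S}$. Since $\bar z\in S$ we have $\phi(\bar z)=\psi(\bar z)$ finite, $\bar z$ is a local minimizer of $\psi$ on $S$ if and only if it is a local minimizer of $\phi$ on $\mathbb{R}^{r}$, and a second-order growth inequality $\phi(z)\ge\phi(\bar z)+\varepsilon\|z-\bar z\|^{2}$ valid near $\bar z$ reads, for $z\in S$, as exactly the growth condition for $\psi$ on $S$ asserted in the proposition. The bridge between the constrained and unconstrained formulations is the identity
\begin{equation}\label{pf-key-id}
{\rm d}^{2}\phi(\bar z;0)(w)={\rm d}^{2}\psi(\bar z)(w)+{\rm d}^{2}\delta_{S}(\bar z;-{\rm d}\psi(\bar z))(w)\qquad\forall\,w\in\mathbb{R}^{r},
\end{equation}
where the left-hand side is the second subderivative of $\phi$ for $\bar v=0$ used in Proposition \ref{thm1}, and the right-hand side uses the notation of \eqref{secondsub} and Definition \ref{Defn2.7}. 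Establishing \eqref{pf-key-id} is the first step.

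For \eqref{pf-key-id}: since $\bar z\in S$, the difference quotient defining ${\rm d}^{2}\phi(\bar z;0)(w)$ equals $\big(\psi(\bar z+tw')-\psi(\bar z)\big)/(\frac{1}{2}t^{2})$ when $\bar z+tw'\in S$ and is $+\infty$ otherwise, so ${\rm d}^{2}\phi(\bar z;0)(w)$ is the liminf of the former over $t\downarrow 0,\ w'\to w,\ \bar z+tw'\in S$. Using the algebraic identity
\[
\frac{\psi(\bar z+tw')-\psi(\bar z)}{\frac{1}{2}t^{2}}=\frac{2\,{\rm d}\psi(\bar z)(w')}{t}+\frac{\psi(\bar z+tw')-\psi(\bar z)-t\,{\rm d}\psi(\bar z)(w')}{\frac{1}{2}t^{2}},
\]
twice semidifferentiability of $\psi$ at $\bar z$ makes the second term converge (a genuine limit, not merely a liminf) to ${\rm d}^{2}\psi(\bar z)(w)$, which is finite by \cite[Exercise 13.7]{RoWe98}; this limit is unaffected by restricting $w'$ to those with $\bar z+tw'\in S$. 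Hence the restricted liminf of the left side equals ${\rm d}^{2}\psi(\bar z)(w)$ plus the restricted liminf of $2\,{\rm d}\psi(\bar z)(w')/t$, and by Definition \ref{Defn2.7} (applied to $-\psi$, using ${\rm d}(-\psi)(\bar z)=-{\rm d}\psi(\bar z)$ from \eqref{eqn2.2new}) the latter is precisely ${\rm d}^{2}\delta_{S}(\bar z;-{\rm d}\psi(\bar z))(w)$. This gives \eqref{pf-key-id} for $w\in T_{S}(\bar z)$; when $w\notin T_{S}(\bar z)$ no admissible pair accumulates at $(0,w)$, so both sides of \eqref{pf-key-id} are $+\infty$. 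I expect this step — bookkeeping the restricted liminf and using twice semidifferentiability to detach the finite ${\rm d}^{2}\psi$-term from the liminf — to be the main technical obstacle; everything else is a short invocation of results already proven.

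Granting \eqref{pf-key-id}, part (i) follows quickly: the first-order conclusion ${\rm d}\psi(\bar z)(w)\ge 0$ for $w\in T_{S}(\bar z)$ is Proposition \ref{non-optimality}(i), while Proposition \ref{thm1}(a) applied to $\phi$ gives ${\rm d}^{2}\phi(\bar z;0)(w)\ge 0$ for every $w$, so \eqref{pf-key-id} yields ${\rm d}^{2}\psi(\bar z)(w)+{\rm d}^{2}\delta_{S}(\bar z;-{\rm d}\psi(\bar z))(w)\ge 0$, in particular on $T_{S}(\bar z)\cap\{w'\mid{\rm d}\psi(\bar z)(w')=0\}$. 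For part (ii) the plan is to check ${\rm d}^{2}\phi(\bar z;0)(w)>0$ for all $w\neq 0$ by a case split: if $w\notin T_{S}(\bar z)$, then ${\rm d}^{2}\delta_{S}(\bar z;-{\rm d}\psi(\bar z))(w)=\infty$ by \eqref{infinity} applied to $-\psi$, so ${\rm d}^{2}\phi(\bar z;0)(w)=\infty$ by \eqref{pf-key-id}; if $w\in T_{S}(\bar z)$ with ${\rm d}\psi(\bar z)(w)>0$, then again ${\rm d}^{2}\delta_{S}(\bar z;-{\rm d}\psi(\bar z))(w)=\infty$ by \eqref{infinity}, so ${\rm d}^{2}\phi(\bar z;0)(w)=\infty$ since ${\rm d}^{2}\psi(\bar z)(w)$ is finite; and if $w\in T_{S}(\bar z)$ with ${\rm d}\psi(\bar z)(w)=0$ (the only remaining case in $T_{S}(\bar z)$ under the assumed first-order condition) and $w\neq 0$, then the hypothesis \eqref{y_localmax-non} together with \eqref{pf-key-id} gives ${\rm d}^{2}\phi(\bar z;0)(w)>0$. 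Thus Proposition \ref{thm1}(b) applies to $\phi$ and produces $\varepsilon,\eta>0$ with $\phi(z)\ge\phi(\bar z)+\varepsilon\|z-\bar z\|^{2}$ for $\|z-\bar z\|\le\eta$; evaluating only at $z\in S\cap\mathbb{B}_{\eta}(\bar z)$ gives $\psi(z)\ge\psi(\bar z)+\varepsilon\|z-\bar z\|^{2}$, which is the asserted second-order growth and in particular shows that $\bar z$ is a local minimizer of $\psi$ on $S$.
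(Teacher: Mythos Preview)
Your proposal is correct and follows essentially the same route as the paper: both reduce to the unconstrained second-order conditions of Proposition~\ref{thm1} applied to $\phi=\psi+\delta_S$, both establish the key decomposition identity \eqref{pf-key-id} by adding and subtracting $t\,{\rm d}\psi(\bar z)(w')$ and using twice semidifferentiability to detach the finite ${\rm d}^2\psi$-limit from the remaining liminf, and both handle part~(ii) by the same three-way case split invoking \eqref{infinity}. Your write-up is in fact a bit more careful than the paper's in separating the cases $w\in T_S(\bar z)$ and $w\notin T_S(\bar z)$ when justifying the identity, but the argument is the same.
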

\begin{proof}
(i) The first-order necessary optimality condition follows from Proposition \ref{non-optimality}.
Let $w \in T_S(\bar z) \cap \{w'|{\rm d} \psi(\bar z)(w') = 0\} $. Since $\psi$ is twice semidifferentiable at $\bar z \in S$, we have
\begin{equation}\label{d2psi+delta}
	\begin{aligned}
		{\rm d}^2(\psi+\delta_{S})(\bar z;0)(w) &  = \liminf\limits_{t \downarrow 0, w' \to w} \frac{\psi(\bar z+tw')-\psi(\bar z)+\delta_{S}(\bar z+tw')}{\frac{1}{2}t^2} \\
		& = \liminf\limits_{t \downarrow 0, w' \to w} \frac{\psi(\bar z+tw')-\psi(\bar z)-t {\rm d}\psi(\bar z)(w')+\delta_{S}(\bar z+tw')+t{\rm d}\psi(\bar z)(w')}{\frac{1}{2}t^2} \\
		& ={\rm d}^2 \psi(\bar z)(w) +{\rm d}^2 \delta_S(\bar z;-{\rm d}\psi(\bar z))(w).
	\end{aligned}
\end{equation}

The desired result follows from \eqref{d2psi+delta} and Theorem \ref{thm1} (a).

(ii) By assumption, ${\rm d} \psi(\bar z)(w) \geq 0$ for any $w \in T_S(\bar z)$.
Let $w\in\mathbb{R}^r$.   {If $w\not\in T_{S}(\bar z)$ or $w \in T_S(\bar z)$ but $ {\rm d} \psi(\bar z)(w) > 0$, then we have ${\rm d}^2 \delta_S(\bar z;-{\rm d}\psi(\bar z))(w) = \infty$ by \eqref{infinity}. By \eqref{d2psi+delta}, it follows that
$${\rm d}^2(\psi+\delta_{S})(\bar z;0)(w) =\infty > 0 \quad \mbox{ either } w\not\in T_{S}(\bar z) \mbox { or } {\rm d} \psi(\bar z)(w) > 0.$$}
If $w\in T_{S}(\bar z) \cap \{w'|{\rm d} \psi(\bar z)(w') = 0\} \setminus \{0\}$, \eqref{d2psi+delta} and \eqref{y_localmax-non} imply ${\rm d}^2(\psi+\delta_{S})(\bar z;0)(w) > 0$.
Finally we obtain
$${\rm d}^2(\psi+\delta_{S})(\bar z;0)(w) > 0 \quad \forall w\in\mathbb{R}^r \setminus \{0\}.$$
Applying Theorem \ref{thm1} (b), we obtain that $\bar z$ is a local minimizer of $\psi$ on $S$.
\end{proof}

\section{Concepts of optimality for the minimax problem}\label{section-points}

In this section, we  introduce a new notion for local optimality of the minimax problem \eqref{minimax}. Before that, we first review several existing optimality concepts for the minimax problem.

\begin{definition}[global minimax point/Stackelberg equilibrium]\label{globaldef}
A point $\left(\bar x, \bar y\right) \in X \times Y$ is a  global minimax point or a Stackelberg equilibrium of problem \eqref{minimax}, if for any $(x, y)$ in $X \times  Y$,
$$
f(\bar x, y) \leq f(\bar x, \bar y) \leq \max _{y^{\prime} \in  Y} f(x, y^{\prime}).
$$
\end{definition}
Define the (global) value function as
$ V(x):=\max_{y \in  Y} f(x,y)$.
Then
 $\left(\bar x, \bar y\right)$ is a global minimax point if and only if $\bar y$ is a global maximum point of $f\left(\bar x , \cdot\right)$  on $ Y$, and $\bar x$ is a global minimum point of $V(x)$ on $X$.

\begin{definition}[saddle point/Nash equilibrium]\label{nash}
A point $\left(\bar x, \bar y\right) \in X \times Y$ is a saddle point or a Nash equilibrium of problem \eqref{minimax}, if for any $(x, y)$ in $X \times  Y$,
$$
f(\bar x, y) \leq f(\bar x, \bar y) \leq f(x, \bar y).
$$
\end{definition}

\begin{definition}[local saddle point/local Nash equilibrium]\label{localnash}
A point $\left(\bar x, \bar y\right) \in X \times Y$ is a local saddle point or a local Nash equilibrium of problem \eqref{minimax} if there exists $\delta >0$ such that for any $x\in X\cap \mathbb{B}_\delta(\bar x), y\in Y\cap \mathbb{B}_\delta(\bar y)$,
$$
f (\bar x, y ) \leq f (\bar x, \bar y) \leq f(x, \bar y).
$$
\end{definition}

\begin{definition}[local minimax point \text{\cite[Definition 14]{JNJ20}}]\label{unJNJ}
A point $(\bar{x}, \bar{y}) \in X \times  Y$ is a local minimax point of problem \eqref{minimax}, if there exist a $\delta_0>0$ and a radius function $\tau: \mathbb{R}_{+} \rightarrow \mathbb{R}_{+}$ satisfying $\tau(\delta) \rightarrow 0$ as $\delta \downarrow 0$, such that for any $\delta \in\left(0, \delta_0\right]$ and any $x\in X\cap \mathbb{B}_\delta(\bar x), y\in Y\cap \mathbb{B}_\delta(\bar y)$, we have
\begin{equation}\label{localmm}
f(\bar{x}, y) \leq f(\bar{x}, \bar{y}) \leq \max_{y' \in  Y\cap\mathbb{B}_{\tau( \delta)}(\bar y)} f (x, y' ).
\end{equation}
\end{definition}
Given $\epsilon \geq 0$ and $\bar y\in Y$, define the local optimal value function at $\bar y$ by
\begin{equation} \label{Vepsilon}
V_{\epsilon}(x):=V_{\epsilon}(x;\bar y):=\max_{y  \in  Y\cap \mathbb{B}_{\epsilon}(\bar y)}f(x,y).
\end{equation}
Note that
$ V_{\epsilon}(\bar x)=V_{\epsilon}(\bar x;\bar y)=f(\bar x,\bar y)$ when $\bar y$ is a maximum point of $f\left(\bar x , \cdot\right)$ in $Y\cap \mathbb{B}_{\epsilon}(\bar y)$.
So $\left(\bar x, \bar y\right)$ is a local minimax point if and only if there exists a function $\tau: \mathbb{R}_{+} \rightarrow \mathbb{R}_{+}$ satisfying $\tau(\delta) \rightarrow 0$ as $\delta \downarrow 0$ such that $\bar y$ is a  maximum point of $f\left(\bar x , \cdot\right)$  on $ Y\cap \mathbb{B}_{\delta}(\bar y)$, and $\bar x$ is a  minimum point of the local optimal value function $V_{\tau(\delta)}(x)$ on $ X\cap \mathbb{B}_{\delta}(\bar x)$, for any small enough $\delta$.

Under the continuity assumption of $f$, Jin et al. \cite[Lemma 16]{JNJ20} showed that $(\bar{x}, \bar{y})\in X\times Y$ is a  local minimax point of problem \eqref{minimax} if and only if $\bar y$ is a local maximum point of the function $f(\bar x, \cdot)$ on $Y$, and
$\bar x$ is a local minimum point of the function $V_{ \epsilon}(x)$ on $X$ for all $\epsilon \downarrow 0$.  Later on, the continuity assumption on $f$ is removed by Zhang et. al. \cite[Proposition 3.9]{Zhang2022}.

It has been shown in Jin et. al. \cite[Remark 15]{JNJ20} that Definition \ref{unJNJ} remains equivalent even if we further restrict function $\tau$ in Definition \ref{unJNJ} to be either monotonic or continuous. The concept of local minimax points is a celebrated idea for characterizing the local optimality for the minimax problems. However, when we tried to characterize  the local optimality, we found  it  important to consider the calmness property of the function $\tau$. Thus, we introduce the following new concept for the local optimality which is  the local minimax point with the function $\tau$ being calm at $0$.

\begin{definition}[calm local minimax point]\label{calmlocal}
A point $(\bar{x}, \bar{y}) \in X \times  Y$ is a calm local minimax point of problem \eqref{minimax} if it is a local minimax point of problem \eqref{minimax} with the  radius function $\tau$ being calm at $0$.
\end{definition}

In Definition \ref{unJNJ} of local minimax points, the radius $R_y$ of the neighborhood of $\bar y$ for maximizing $f(x, y)$ in $y$ is $\tau(\delta)$, i.e., the radius $R_y$ in the local optimal value function \eqref{Vepsilon} is $\tau(\delta)$.
Concurrently the radius $R_x$ of the neighborhood of $\bar x$ for minimizing $V_{\tau(\delta)}(x)$ is $\delta$. Thus the ratio $R_y/R_x$ of these two radii is $\tau(\delta)/\delta$.
In Definition \ref{calmlocal}, the calmness of $\tau$ at $0$ implies that for any sufficiently small $\delta$, we have $\tau(\delta) \leq \kappa \delta$ for some $\kappa > 0$. This is equivalent to imposing a bound on the ratio $R_y/R_x$ of the radii of the local neighborhoods where the maximization and minimization takes place respectively.
Next we can give an equivalent definition of calm local minimax point, without introducing a radius function $\tau$.

\begin{proposition}[an equivalent definition of calm local minimax point]\label{equiv-calm-local}
 The point $(\bar{x},\bar{y})\in X\times Y$ is a calm local minimax point, if and only if there exist a $\delta_0>0$ and a $\kappa>0$, such that for any $\delta\in(0, \delta_0]$ and any $x\in X\cap \mathbb{B}_{\delta}(\bar{x})$, $y\in Y\cap \mathbb{B}_{\delta}(\bar{y})$, we have
			\begin{equation}\label{equiv}
				f(\bar{x}, y) \leq f(\bar{x},\bar{y})
				\leq \max_{y'\in Y\cap \mathbb{B}_{\kappa \delta}(\bar{y})} f(x, y').
			\end{equation}
\end{proposition}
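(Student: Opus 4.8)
The plan is to prove the two implications directly from the relevant definitions, using only the elementary fact that enlarging the $y$-ball can only increase the inner maximum, together with the observation that calmness of the radius function at $0$ is, up to replacing that function by a linear one, the same as controlling the ratio of the two radii by a single constant.

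\textbf{The ``only if'' direction.} Suppose $(\bar x,\bar y)$ is a calm local minimax point, so by Definition \ref{calmlocal} there exist $\delta_0>0$ and a radius function $\tau$, calm at $0$, for which the inequalities \eqref{localmm} of Definition \ref{unJNJ} hold on $(0,\delta_0]$. First I would observe that calmness forces $\tau(0)=0$: the estimate $|\tau(\delta)-\tau(0)|\le\kappa\delta$ on a right-neighborhood of $0$ makes $\tau$ continuous at $0$ from the right, so $\tau(0)=\lim_{\delta\downarrow 0}\tau(\delta)=0$ since $\tau$ is a radius function; hence $\tau(\delta)=|\tau(\delta)-\tau(0)|\le\kappa\delta$ for all $\delta$ in some interval $(0,\delta_1]$. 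Put $\delta_0':=\min\{\delta_0,\delta_1\}$. For every $\delta\in(0,\delta_0']$ we have the inclusion $Y\cap\mathbb{B}_{\tau(\delta)}(\bar y)\subseteq Y\cap\mathbb{B}_{\kappa\delta}(\bar y)$, whence $\max_{y'\in Y\cap\mathbb{B}_{\tau(\delta)}(\bar y)}f(x,y')\le\max_{y'\in Y\cap\mathbb{B}_{\kappa\delta}(\bar y)}f(x,y')$; substituting this into the right-hand inequality of \eqref{localmm} yields \eqref{equiv} with this $\kappa$ and with $\delta_0'$ in place of $\delta_0$.

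\textbf{The ``if'' direction.} Here I would simply exhibit a calm radius function. Given $\delta_0>0$ and $\kappa>0$ as in \eqref{equiv}, define $\tau:\mathbb{R}_+\to\mathbb{R}_+$ by $\tau(\delta):=\kappa\delta$. Then $\tau(\delta)\to 0$ as $\delta\downarrow 0$, and $\tau$ is calm at $0$ since $|\tau(\delta)-\tau(0)|=\kappa\delta$ for every $\delta\ge 0$, that is, calmness holds globally with constant $\kappa$. Because $\mathbb{B}_{\tau(\delta)}(\bar y)=\mathbb{B}_{\kappa\delta}(\bar y)$, the two inequalities in \eqref{equiv} coincide with \eqref{localmm} for this particular $\tau$ and are valid on $(0,\delta_0]$; therefore $(\bar x,\bar y)$ is a local minimax point whose radius function is calm at $0$, i.e., a calm local minimax point.

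\textbf{Main obstacle.} There is no genuine difficulty in this proposition. The only point requiring care is the bookkeeping in the first direction: deducing from the merely local calmness estimate a bound $\tau(\delta)\le\kappa\delta$ valid on an honest interval $(0,\delta_1]$, which in turn hinges on first pinning down $\tau(0)=0$ so that the estimate does not merely read $\tau(\delta)\le\tau(0)+\kappa\delta$. One should also fix once and for all the reading of ``$\max$'' in \eqref{localmm} and \eqref{equiv} (as a supremum, if attainment of the inner maximum over $Y\cap\mathbb{B}_r(\bar y)$ is not being assumed), since the only property of it used throughout is monotonicity under enlargement of the feasible set.
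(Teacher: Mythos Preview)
Your proof is correct and follows essentially the same route as the paper: for ``$\Rightarrow$'' you use calmness to bound $\tau(\delta)\le\kappa\delta$ and enlarge the inner ball, and for ``$\Leftarrow$'' you set $\tau(\delta):=\kappa\delta$. Your version is in fact slightly more careful than the paper's, which jumps straight from calmness to $\tau(\delta)\le\kappa\delta$ without explicitly noting that $\tau(0)=0$ or that $\delta_0$ may need to be shrunk.
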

\begin{proof}
``$\Rightarrow$" Let $(\bar{x},\bar{y})\in X\times Y$ be a calm local minimax point. Then, the relation in \eqref{localmm} holds with $\tau$ being calm at $0$, i.e., $\tau(\delta) \leq \kappa\delta$ for some $\kappa >0$. Thus, $\max_{y'\in Y\cap \mathbb{B}_{\tau(\delta)}(\bar{y})} f(x, y') \leq \max_{y'\in Y\cap \mathbb{B}_{\kappa \delta}(\bar{y})} f(x, y')$ and \eqref{equiv} holds.

``$\Leftarrow$" This is obvious since we can let $\tau(\delta)=\kappa \delta$.
\end{proof}

\begin{remark}		 The concept of local minimax optimality was extended to the case where the constraint set  $Y$ depends on variable $x$ in \cite[Definition 1.1]{DaiZh-2020}. Similarly we can also define the calm local minimax optimality for the minimax problem $\min_{x \in X}\max_{y\in Y(x)} f(x,y)$ by requiring  the radius function  defined in \cite[Definition 1.1]{DaiZh-2020} to be calm.
\end{remark}

The following example demonstrates that the concept of calm local minimax optimality is not equivalent to the concept of local minimax optimality.
\begin{example}\label{ex3.1}
Consider
$$
\min _{x \in \mathbb{R}} \max _{y \in \mathbb{R}} f(x, y):=-x^2+2xy^3-y^6 .
$$
We show that $(0,0)$ is a local (and also global) minimax point but not a calm local minimax point.
\begin{itemize}
\item{} Take $\tau(\delta)=\delta^{\frac{1}{3}}$ and $\delta_0=1$. Then for any $|x| \leq \delta$ and $|y| \leq \delta$ with $\delta \in(0,\delta_0]$ we have
$$
-y^6=f(0, y) \leq f(0,0) \leq \max _{y \in \mathbb{B}_{\tau( \delta)}(0)}-x^2+2xy^3-y^6=0.
$$
 Hence $(0,0)$ is a local minimax point.
\item{} For each $x$, we have
$$
-y^6=f(0, y) \leq f(0,0) \leq \max _{y}-x^2+2xy^3-y^6=0,
$$
where $x^{\frac{1}{3}}$ is the maximizer. Hence $(0,0)$ is a global minimax point.
{
\item{}
By Proposition \ref{equiv-calm-local}, one can check that $(0,0)$ is not a calm local minimax point.
Indeed, it is easy to see that $f(0,0)=0$, and for all positive $\delta$, $\kappa$,
$$
\max _{y\in \mathbb{B}_{\kappa\delta}(0)} f(x, y)=\left\{\begin{aligned}
	0, & \text { if } |x| \leq \kappa^3\delta^3, \\
	-x^2+2|x|\kappa^3\delta^3-\kappa^6\delta^6<0, & \text { if } |x| >\kappa^3\delta^3.
\end{aligned}\right.
$$
Since $\delta^3=o(\delta)$, for any given $\kappa$, there always exists sufficiently small $\delta$ and some $x$ such that $\kappa^3\delta^3 <x \leq \delta$. Thus, there exists no $\kappa$ in the equivalent definition in Proposition \ref{equiv-calm-local}.
}
\end{itemize}
\end{example}

Next, we study the relation between the global minimax point and the calm local minimax point. Generally, a global minimax point may not be a local minimax point {(and thus not a calm local minimax point)}; cf. \cite[Proposition 21]{JNJ20} for an explicit example, where the global minimax point can be neither local minimax nor a stationary point {(i.e., $\nabla f(\bar x,\bar y)=0$ in the unconstrained smooth case)}.
Nevertheless, global minimax points can be guaranteed to be calm local minimax if the optimal solution set of the inner maximization problem satisfies the following inner calmness condition.

\begin{definition}[inner calmness \text{\cite[Definition 2.2]{BGO19}}]\label{innercalm}
Consider  a set-valued map $\Gamma: \mathbb{R}^n \rightrightarrows \mathbb{R}^m$.
 Given $\bar x\in X$ and $\bar y \in  \Gamma(\bar x)$, we say that the set-valued map $\Gamma$ is inner calm at $(\bar{x}, \bar{y})$ w.r.t. $X$ if there exist $\kappa >0$ and  $\delta_0>0$  such that
$$\bar y \in \Gamma(x)+\kappa \|x-\bar x\| \mathbb{B} \quad \forall x\in \mathbb{B}_{\delta_0}(\bar x) \cap X,$$
or equivalently (\cite[Lemma 2.2]{BGO19} or \cite[Definition 2.2]{Benko-2021}), if there exists $\kappa >0$ such that for any $x_k \rightarrow \bar x$ with $x_k \in X$ there exists a sequence $y_k$ satisfying $y_k \in \Gamma (x_k )$ and for sufficiently large $k$,
 	$\|y_k-\bar y\| \leq \kappa \|x_k-\bar x\|.$
\end{definition}

Using the inner calmness, the following result gives a condition under which a global minimax point is always a calm local minimax point.

\begin{proposition}\label{JNJ-ST}
Let $(\bar x, \bar y)$ be a global minimax point of problem \eqref{minimax}. If the solution mapping $S(x):=\argmax\limits_{y\in  Y} f( x,y)$ is inner calm at $(\bar x, \bar y)$ w.r.t. $X$, then $(\bar x, \bar y)$ is a calm local minimax point.
\end{proposition}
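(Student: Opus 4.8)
The plan is to verify the two inequalities in the equivalent characterization of a calm local minimax point given in Proposition \ref{equiv-calm-local}, namely to produce $\delta_0>0$ and $\kappa>0$ such that for every $\delta\in(0,\delta_0]$ and every $x\in X\cap\mathbb{B}_\delta(\bar x)$, $y\in Y\cap\mathbb{B}_\delta(\bar y)$ one has $f(\bar x,y)\le f(\bar x,\bar y)\le\max_{y'\in Y\cap\mathbb{B}_{\kappa\delta}(\bar y)}f(x,y')$. The left-hand inequality is immediate: since $(\bar x,\bar y)$ is a global minimax point, $\bar y$ globally maximizes $f(\bar x,\cdot)$ over $Y$, so $f(\bar x,y)\le f(\bar x,\bar y)$ for every $y\in Y$, a fortiori for $y\in Y\cap\mathbb{B}_\delta(\bar y)$. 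It remains to handle the right-hand inequality, and this is where inner calmness enters.

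For the right-hand inequality, the idea is to exploit inner calmness to exhibit, for each nearby $x$, a maximizer of $f(x,\cdot)$ over $Y$ that lies within $\kappa\|x-\bar x\|$ of $\bar y$. Concretely, I would let $\kappa>0$ and $\delta_0>0$ be the constants from the inner calmness of $S$ at $(\bar x,\bar y)$ w.r.t. $X$, so that for every $x\in X\cap\mathbb{B}_{\delta_0}(\bar x)$ there is $y_x\in S(x)=\argmax_{y\in Y}f(x,y)$ with $\|y_x-\bar y\|\le\kappa\|x-\bar x\|$. Fix $\delta\in(0,\delta_0]$ and $x\in X\cap\mathbb{B}_\delta(\bar x)$. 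Then $y_x\in Y$ (since $S(x)\subseteq Y$) and $\|y_x-\bar y\|\le\kappa\|x-\bar x\|\le\kappa\delta$, hence $y_x\in Y\cap\mathbb{B}_{\kappa\delta}(\bar y)$, while $f(x,y_x)=V(x):=\max_{y\in Y}f(x,y)$ by the choice of $y_x$.

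To close the argument I would invoke the other half of the global minimax property: $\bar x$ globally minimizes $V$ over $X$, and $V(\bar x)=f(\bar x,\bar y)$ because $\bar y$ maximizes $f(\bar x,\cdot)$ over $Y$. Therefore $\max_{y'\in Y\cap\mathbb{B}_{\kappa\delta}(\bar y)}f(x,y')\ge f(x,y_x)=V(x)\ge V(\bar x)=f(\bar x,\bar y)$, which is precisely the desired right-hand inequality, valid for every $\delta\in(0,\delta_0]$ and $x\in X\cap\mathbb{B}_\delta(\bar x)$ (independently of $y$). Combining the two inequalities and applying Proposition \ref{equiv-calm-local} then yields that $(\bar x,\bar y)$ is a calm local minimax point. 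I do not expect a genuine obstacle here; the only point requiring a little care is to use inner calmness in its pointwise/sequential form so as to extract, for each admissible $x$, a single near-maximizer $y_x$, and to observe that $y_x$ automatically lies in $Y$.
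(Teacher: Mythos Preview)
Your proof is correct and follows essentially the same approach as the paper: both use inner calmness to extract, for each nearby $x$, a global maximizer $y_x\in S(x)$ within $\kappa\|x-\bar x\|$ of $\bar y$, then invoke the global minimax property to obtain the required inequality and conclude via the $\tau(\delta)=\kappa\delta$ characterization (Proposition~\ref{equiv-calm-local}). The only cosmetic difference is that the paper records the slightly stronger observation $\max_{y'\in Y\cap\mathbb{B}_{\kappa\delta}(\bar y)}f(x,y')=\max_{y'\in Y}f(x,y')$, whereas you write the corresponding chain of inequalities.
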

\begin{proof}
	By the inner calmness of $S$ at $(\bar x, \bar y)$, there exist $\kappa>0$ and $\delta_0>0$ such that for all $x$ in $\mathbb{B}_{\delta_0}(\bar x)\cap X$ there exists $\bar y(x) \in S(x)$ satisfying
	$
	\|\bar{y}(x)-\bar y\| \leq \kappa \|x-\bar x\|
	$.
This implies the existence of $\bar y(x) \in S(x)$ in $\mathbb{B}_{\kappa \delta}(\bar y)\cap Y$ for any $\delta \in\left(0, \delta_0\right]$ and any $x$ in $\mathbb{B}_{\delta}(\bar x)\cap X$. It follows that
	$$\max_{y \in  \mathbb{B}_{\kappa \delta}(\bar y)\cap Y} f (x, y ) = \max _{y \in  Y} f (x , y ) .$$
Thus,  a global minimax point $(\bar x, \bar y)$ is a calm local minimax point with $\tau(\delta)=\kappa \delta$.
\end{proof}

%
%
In \cite[Proposition 6.1]{Chen2023}, Chen et al. gave conditions under which the solution mapping is Lipschitz continuous, and thus is inner calm. We use their result to give the following corollary.

\begin{corollary}\label{corollary-strong-concave}
Suppose that $Y$ is convex and $f$ satisfies the following properties:
\begin{itemize}
\item[(i)] the function $f$ is $L_f$-smooth w.r.t. $x$, that is,
$$
\left\|\nabla_y f(x, y)-\nabla_y f\left(x^{\prime}, y\right)\right\| \leq L_f \left\|x-x^{\prime}\right\| \quad \forall x, x^{\prime} \in   X, y\in Y,
$$
\item[(ii)] {and} it is gradient dominant in $y$ for some $\alpha>0$, i.e.,
$$
L_f
\left\|
y-P_{Y}\left(y+(1 / L_f) \nabla_y f(x, y)\right)
\right\|
\geq \alpha\left\|y-y_p(x)\right\|
 \quad \forall x \in X, y \in Y,
$$
where $P_{Y}(\cdot)$ is the projection operator and $y_p(x)$ is the projection of $y$ onto the solution set $S(x):=\argmax _{y \in Y} f(x, y)$.
\end{itemize}
Then, the solution mapping $S$ is inner calm w.r.t. $X$ at any point $(\bar x,\bar y) \in \operatorname{gph} S$, and thus a global minimax point is a calm local minimax point. Particularly, by \cite[Example 6.1]{Chen2023},
if $Y$ is convex, $f$ is $L_f$-smooth, and $f(x, \cdot)$ is strongly concave for any $x\in X$, then a global minimax point is a calm local minimax point.
\end{corollary}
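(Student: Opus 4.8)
The plan is to reduce Corollary \ref{corollary-strong-concave} to Proposition \ref{JNJ-ST} by verifying that, under assumptions (i)--(ii), the solution mapping $S(x) = \argmax_{y \in Y} f(x,y)$ is inner calm (indeed Lipschitz-like) at every point of its graph, and then invoking the cited result \cite[Proposition 6.1]{Chen2023} to do the quantitative work. Since Proposition \ref{JNJ-ST} already states that inner calmness of $S$ at a global minimax point forces that point to be a calm local minimax point, the entire content of the corollary is the passage from $(\mathrm{i})$--$(\mathrm{ii})$ to inner calmness.

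First I would recall what \cite[Proposition 6.1]{Chen2023} gives: under the stated $L_f$-smoothness in $x$ and the gradient-dominance (Polyak--{\L}ojasiewicz-type) condition in $y$ with modulus $\alpha$, the solution map $S$ is single-valued-up-to-projection in a way that yields an estimate of the form $\operatorname{dist}(y, S(x')) \le \frac{L_f}{\alpha}\|x - x'\|$ for $y \in S(x)$, i.e. $S$ is Lipschitz continuous as a set-valued map on $X$ with constant $\kappa := L_f/\alpha$. Then I would observe that Lipschitz continuity of this type trivially implies inner calmness in the sense of Definition \ref{innercalm}: given $(\bar x,\bar y)\in \operatorname{gph} S$ and any sequence $x_k \to \bar x$ with $x_k \in X$, setting $y_k := P_{S(x_k)}(\bar y)$ gives $y_k \in S(x_k)$ and $\|y_k - \bar y\| = \operatorname{dist}(\bar y, S(x_k)) \le \kappa\|x_k - \bar x\|$, which is exactly the inner-calmness estimate. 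Hence $S$ is inner calm at every $(\bar x,\bar y)\in\operatorname{gph}S$ w.r.t.\ $X$, and Proposition \ref{JNJ-ST} applies to any global minimax point.

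For the last sentence of the corollary I would specialize: if $Y$ is convex, $f$ is $L_f$-smooth (so in particular (i) holds), and $f(x,\cdot)$ is $\mu$-strongly concave for every $x \in X$, then the inner maximizer is unique, $S(x) = \{y^*(x)\}$ is single-valued, and strong concavity implies the gradient-dominance condition (ii) with $\alpha = \mu/L_f$ or a comparable explicit constant --- this is precisely \cite[Example 6.1]{Chen2023}, which I would cite rather than reprove. With (i) and (ii) in force, the first part of the corollary applies and a global minimax point is a calm local minimax point.

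The only mild obstacle is bookkeeping: making sure the projection-based gradient-dominance inequality in (ii) is stated in exactly the form used by \cite[Proposition 6.1]{Chen2023} so that the quoted Lipschitz estimate is legitimately available, and checking that one does not need convexity of $X$ anywhere (indeed Definition \ref{innercalm} and Proposition \ref{JNJ-ST} only use that $X$ is closed, which is a standing assumption). Everything else is a direct chaining of Definition \ref{innercalm}, Proposition \ref{JNJ-ST}, and the two cited results from \cite{Chen2023}, with no genuinely new estimate required.
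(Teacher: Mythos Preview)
Your proposal is correct and mirrors the paper's own treatment: the paper does not give an explicit proof but simply remarks that \cite[Proposition 6.1]{Chen2023} shows the solution mapping is Lipschitz continuous (hence inner calm), after which Proposition \ref{JNJ-ST} applies; the strongly concave case is handled by citing \cite[Example 6.1]{Chen2023}. Your write-up just makes these implications explicit.
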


Note that the condition (ii) in Corollary \ref{corollary-strong-concave} for the unconstrained case (i.e., $Y = \mathbb{R}^m$) is the error bounds of Luo and Tseng \cite{LT1993}, and is equivalent to the well-known Polyak-Łojasiewicz condition, which is weaker than the strong concavity by \cite[Theorem 2]{KNS16}.

{From the definitions, it is clear that a calm local minimax point must be a local minimax point. Next, we give conditions under which they are equivalent. To this end, we need some equivalent descriptions for the local minimax point and the calm local minimax point.

\begin{lemma}\label{locamm-equi}
The point $(\bar{x}, \bar{y}) \in X \times Y$ is a local minimax point, if and only if, there exist a $\delta_0>0$ and a function $\tau: \mathbb{R}_{+} \rightarrow \mathbb{R}_{+}$satisfying $\tau(\delta) \rightarrow 0$ as $\delta \downarrow 0$, such that for any $\delta \in\left(0, \delta_0\right]$ and any $x \in X \cap \mathbb{B}_\delta(\bar{x}), y \in Y \cap \mathbb{B}_\delta(\bar{y})$, we have
$$
f(\bar{x}, y) \leq f(\bar{x}, \bar{y}) \leq \max _{y^{\prime} \in Y \cap \mathbb{B}_{\tau(\|x-\bar{x}\|)}(\bar{y})} f\left(x, y^{\prime}\right).
$$
\end{lemma}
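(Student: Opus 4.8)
The plan is to prove the two implications of Lemma \ref{locamm-equi} separately; in each direction one can keep essentially the same $\delta_0$ and adjust the radius function only by a harmless monotonization, so the whole argument reduces to elementary set inclusions together with the fact that the maximum of $f(x,\cdot)$ over a larger $y$-neighborhood can only increase.

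For the ``only if'' part, assume $(\bar x,\bar y)$ is a local minimax point, so that \eqref{localmm} holds with some $\delta_0>0$ and some $\tau$. I claim the asserted condition holds with the \emph{same} $\delta_0$ and $\tau$. Fix $\delta\in(0,\delta_0]$, $x\in X\cap\mathbb{B}_\delta(\bar x)$ and $y\in Y\cap\mathbb{B}_\delta(\bar y)$. The inequality $f(\bar x,y)\le f(\bar x,\bar y)$ is exactly the left half of \eqref{localmm}. For the right half, if $x=\bar x$ it is immediate, since $\bar y\in Y\cap\mathbb{B}_{\tau(0)}(\bar y)$ forces $\max_{y'\in Y\cap\mathbb{B}_{\tau(0)}(\bar y)}f(\bar x,y')\ge f(\bar x,\bar y)$; and if $x\ne\bar x$, set $r:=\|x-\bar x\|\in(0,\delta]\subseteq(0,\delta_0]$, observe that $x\in X\cap\mathbb{B}_r(\bar x)$ and $\bar y\in Y\cap\mathbb{B}_r(\bar y)$, and invoke Definition \ref{unJNJ} with $\delta$ replaced by $r$ at the pair $(x,\bar y)$ to obtain $f(\bar x,\bar y)\le\max_{y'\in Y\cap\mathbb{B}_{\tau(r)}(\bar y)}f(x,y')=\max_{y'\in Y\cap\mathbb{B}_{\tau(\|x-\bar x\|)}(\bar y)}f(x,y')$, which is what is needed.

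For the ``if'' part, assume the asserted inequalities hold with some $\delta_0>0$ and $\tau$. Since when $x=\bar x$ the right inequality is automatic (again because the center $\bar y$ lies in the $y$-neighborhood), the value $\tau(0)$ plays no genuine role, so I would first reset $\tau(0):=0$ and then pass to the nondecreasing envelope $\bar\tau(\delta):=\sup_{0\le s\le\delta}\tau(s)$, which remains nonnegative, is finite for $\delta$ near $0$ (after shrinking $\delta_0$ if necessary), and still satisfies $\bar\tau(\delta)\to 0$ as $\delta\downarrow 0$. Then for any $\delta\in(0,\delta_0]$ and $x\in X\cap\mathbb{B}_\delta(\bar x)$ one has $\|x-\bar x\|\le\delta$, hence $\tau(\|x-\bar x\|)\le\bar\tau(\delta)$ and $Y\cap\mathbb{B}_{\tau(\|x-\bar x\|)}(\bar y)\subseteq Y\cap\mathbb{B}_{\bar\tau(\delta)}(\bar y)$; taking the maximum over the larger set in the hypothesis yields $f(\bar x,\bar y)\le\max_{y'\in Y\cap\mathbb{B}_{\bar\tau(\delta)}(\bar y)}f(x,y')$, while the left inequality is unchanged. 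Thus \eqref{localmm} holds with $\bar\tau$ and $(\bar x,\bar y)$ is a local minimax point in the sense of Definition \ref{unJNJ}.

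I do not expect a substantive obstacle here; the proof is mostly bookkeeping. The only two points that require a little care are: in the ``if'' direction, ensuring that the monotone envelope still tends to $0$, which is why one must first set $\tau(0):=0$ before taking the supremum (otherwise a large value $\tau(0)$ would contaminate $\bar\tau(\delta)$ for all $\delta$); and in the ``only if'' direction, recalling that ``applying \eqref{localmm} at scale $\|x-\bar x\|$'' is legitimate only when $x\ne\bar x$, so the degenerate case $x=\bar x$ must be dispatched separately via the trivial observation that $\bar y\in Y\cap\mathbb{B}_{\tau(\|x-\bar x\|)}(\bar y)$.
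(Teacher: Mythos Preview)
Your proof is correct and follows essentially the same approach as the paper's own: both directions reduce to applying Definition~\ref{unJNJ} at scale $\|x-\bar x\|$ (for ``only if'') and passing to a monotone envelope of $\tau$ (for ``if''). You are in fact slightly more careful than the paper in handling the degenerate case $x=\bar x$ and the potential contamination by $\tau(0)$, but these are minor technicalities and the arguments coincide in substance.
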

\begin{proof}
``$\Leftarrow$" Similar to the proof for \cite[Remark 15]{JNJ20}, we have the facts that $\tau (\delta)\leq \widetilde\tau(\delta) :=\sup_{\delta' \in (0,\delta]}\tau(\delta')$ for any $\delta \in (0,\delta_0]$, $\widetilde\tau(\delta) \rightarrow 0$ as $\delta \downarrow 0$, and $\widetilde\tau$ is monotonic. Thus, without loss of generality, we assume that $\tau$ is monotonic. Since $\|x-\bar{x}\| \leq \delta$ for $x \in X \cap \mathbb{B}_\delta(\bar{x})$ and $\tau$ is monotonic, we get $\tau(\|x-\bar{x}\|) \leq \tau(\delta)$. Thus
$$
\max _{y^{\prime} \in Y \cap \mathbb{B}_{\tau(\|x-\bar{x}\|)}(\bar{y})} f\left(x, y^{\prime}\right) \leq \max _{y^{\prime} \in Y \cap \mathbb{B}_{\tau(\delta)}(\bar{y})} f\left(x, y^{\prime}\right).
$$
``$\Rightarrow$" For any given $x \in X \cap \mathbb{B}_\delta(\bar{x})$, denote $\delta^{\prime}=\|x-\bar{x}\|$, then $\delta^{\prime} \leq \delta \leq \delta_0$. Thus, by \eqref{localmm} in which $\delta$ be $\delta^{\prime}$, we have
$$
f(\bar{x}, \bar{y}) \leq \max _{y^{\prime} \in Y \cap \mathbb{B}_{\tau\left(\delta^{\prime}\right)}(\bar{y})} f\left(x, y^{\prime}\right)=\max _{y^{\prime} \in Y \cap \mathbb{B}_{\tau(\|x-\bar{x}\|)}(\bar{y})} f\left(x, y^{\prime}\right)
$$
which finishes the proof.
\end{proof}

\begin{lemma}\label{calmlocamm-equi}
The point $(\bar x, \bar y)$ is a calm local minimax point of problem \eqref{minimax}, if and only if, it is a local minimax point and the optimal solution mapping
\begin{equation*}
S_{\tau(\cdot)} (x):=\argmax _{y \in Y \cap \mathbb{B}_{\tau(\|x-\bar{x}\|)}(\bar{y})} f(x, y)
\end{equation*}
is inner calm at $(\bar{x}, \bar{y})$.
\end{lemma}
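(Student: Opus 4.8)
\emph{Proof strategy.} The plan is to prove the two implications by directly translating each condition into the other, reading ``local minimax point'' in the equivalent form supplied by Lemma \ref{locamm-equi}: there exist $\delta_0>0$ and a radius function $\tau$ with $\tau(\delta)\to 0$ such that $f(\bar x,y)\le f(\bar x,\bar y)\le V_{\tau(\|x-\bar x\|)}(x)$ for all $x\in X\cap\mathbb{B}_\delta(\bar x)$, $y\in Y\cap\mathbb{B}_\delta(\bar y)$, $\delta\in(0,\delta_0]$, with $V_\epsilon$ as in \eqref{Vepsilon}. I would first record the elementary observations that $S_{\tau(\cdot)}(x)$ is exactly the maximizer set defining $V_{\tau(\|x-\bar x\|)}(x)$, that $S_{\tau(\cdot)}(\bar x)=\{\bar y\}$ since $\mathbb{B}_0(\bar y)=\{\bar y\}$ (so inner calmness of $S_{\tau(\cdot)}$ at $(\bar x,\bar y)$ w.r.t. $X$ is well posed), and that one may take $\tau(0)=0$ without loss of generality.

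\emph{``Only if''.} Let $(\bar x,\bar y)$ be a calm local minimax point, witnessed by a radius function $\tau$ that is calm at $0$. I would first note that calmness at $0$ together with $\tau(\delta)\to 0$ forces $\tau(0)=0$, hence $\tau(\delta)\le\kappa\delta$ for all small $\delta$ and some $\kappa>0$; in particular $(\bar x,\bar y)$ is a local minimax point. Next I would check that for $x$ near $\bar x$ the set $Y\cap\mathbb{B}_{\tau(\|x-\bar x\|)}(\bar y)$ is nonempty and compact and the maximum of $f(x,\cdot)$ over it is attained (this is precisely the ``$\max$'' in \eqref{localmm}), so $S_{\tau(\cdot)}(x)\neq\emptyset$. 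Finally, any $y\in S_{\tau(\cdot)}(x)$ lies in $\mathbb{B}_{\tau(\|x-\bar x\|)}(\bar y)\subseteq\mathbb{B}_{\kappa\|x-\bar x\|}(\bar y)$, i.e. $\|y-\bar y\|\le\kappa\|x-\bar x\|$, which is exactly inner calmness of $S_{\tau(\cdot)}$ at $(\bar x,\bar y)$ with constant $\kappa$.

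\emph{``If''.} Conversely, suppose $(\bar x,\bar y)$ is a local minimax point with radius function $\tau$ and $S_{\tau(\cdot)}$ is inner calm at $(\bar x,\bar y)$ w.r.t. $X$, say $\bar y\in S_{\tau(\cdot)}(x)+\kappa\|x-\bar x\|\mathbb{B}$ for all $x\in X\cap\mathbb{B}_{\delta_1}(\bar x)$. I would show that the linear radius function $\hat\tau(\delta):=\kappa\delta$, which is monotone, tends to $0$, and is calm at $0$, witnesses local minimaxity in the sense of Definition \ref{unJNJ}. The inequality $f(\bar x,y)\le f(\bar x,\bar y)$ for $y\in Y\cap\mathbb{B}_\delta(\bar y)$ is inherited from the hypothesis. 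For $x\in X\cap\mathbb{B}_\delta(\bar x)$ with $\delta\le\min\{\delta_0,\delta_1\}$, inner calmness yields $y(x)\in S_{\tau(\cdot)}(x)$ with $\|y(x)-\bar y\|\le\kappa\|x-\bar x\|\le\hat\tau(\delta)$; then $f(x,y(x))=V_{\tau(\|x-\bar x\|)}(x)\ge f(\bar x,\bar y)$ by the local minimax inequality, while $y(x)\in Y\cap\mathbb{B}_{\hat\tau(\delta)}(\bar y)$, so $\max_{y'\in Y\cap\mathbb{B}_{\hat\tau(\delta)}(\bar y)}f(x,y')\ge f(x,y(x))\ge f(\bar x,\bar y)$. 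Hence \eqref{localmm} holds with $\hat\tau$, so $(\bar x,\bar y)$ is a calm local minimax point. Equivalently, one may first establish the Lemma \ref{locamm-equi}-form inequality for $\hat\tau$ and then invoke the ``$\Leftarrow$'' part of Lemma \ref{locamm-equi}, using that $\hat\tau$ is already monotone.

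\emph{Main difficulty.} Since the argument is essentially a repackaging of the definitions, I do not anticipate a genuine obstacle; the points that require care are bookkeeping: attainment of the localized maxima so that $S_{\tau(\cdot)}(x)\neq\emptyset$ near $\bar x$ (which is built into the ``$\max$'' in \eqref{localmm} and the standing assumption on the inner maximization), well-posedness of inner calmness at $(\bar x,\bar y)$ via $\bar y\in S_{\tau(\cdot)}(\bar x)$, and matching the ``$\tau(\delta)$'' form of Definition \ref{unJNJ} with the ``$\tau(\|x-\bar x\|)$'' form of Lemma \ref{locamm-equi}, where one must respect monotonicity of the radius function.
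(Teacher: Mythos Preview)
Your proposal is correct and follows essentially the same approach as the paper's proof. Both directions proceed exactly as you describe: for ``$\Rightarrow$'' the calmness bound $\tau(\delta)\le\kappa\delta$ immediately confines every element of $S_{\tau(\cdot)}(x)$ to $\mathbb{B}_{\kappa\|x-\bar x\|}(\bar y)$, and for ``$\Leftarrow$'' one picks $y(x)\in S_{\tau(\cdot)}(x)\cap\mathbb{B}_{\kappa\|x-\bar x\|}(\bar y)$ via inner calmness, uses the Lemma~\ref{locamm-equi} form of local minimaxity to get $f(\bar x,\bar y)\le f(x,y(x))$, and concludes with the linear radius $\hat\tau(\delta)=\kappa\delta$; your additional remarks on $\tau(0)=0$, $S_{\tau(\cdot)}(\bar x)=\{\bar y\}$, and attainment are careful bookkeeping that the paper leaves implicit.
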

\begin{proof}
``$\Rightarrow$" Since  $\tau$ is calm at $0$, there exists $\kappa >0$ such that for any $x_k \rightarrow \bar x$, $y_k \in S_{\tau(\cdot)} (x_k )$, and sufficiently large $k$,
 	$$\|y_k-\bar y\| \leq \tau(\|x_k-\bar x\|) \leq \kappa \|x_k-\bar x\|.$$
 By Definition \ref{innercalm}, $S_{\tau(\cdot)}(x)$ is inner calm at $(\bar x,\bar y)$.

``$\Leftarrow$" By the inner calmness of $S_{\tau(\cdot)}(x)$ at $(\bar x, \bar y)$, there exist $\kappa >0$ and  $\delta_0>0$  such that
$$\bar y \in S_{\tau(\cdot)}(x)+\kappa \|x-\bar x\| \mathbb{B} \quad \forall x\in \mathbb{B}_{\delta_0}(\bar x) \cap X.$$

This means that for any $0<\delta \leq \delta_0$ and any $x \in \mathbb{B}_{\delta}(\bar x)\cap X$, there exists $\bar y(x) \in S_{\tau(\cdot)}(x)$ satisfying $\|\bar{y}(x)-\bar y\| \leq \kappa \|x-\bar x\| \leq \kappa \delta$. Hence, with Lemma \ref{locamm-equi}, we have
	$$f(\bar x,y) \leq f(\bar x,\bar y) \leq \max _{y \in Y \cap \mathbb{B}_{\tau(\|x-\bar{x}\|)}(\bar{y})} f (x , y )=f(x,\bar y(x)) \leq \max_{y \in  Y \cap \mathbb{B}_{\kappa \delta}(\bar y)} f (x, y ).$$

Thus, $(\bar x, \bar y)$ is a calm local minimax point with $\tau(\delta)=\kappa \delta$.
\end{proof}

With the above equivalent description for the calm local minimax point, we can now give conditions under which the calm local minimax point is the same as the local minimax point.
}

\begin{proposition}\label{local-to-lip}
Let $(\bar x,\bar y)$ be a local minimax point of problem \eqref{minimax}. Suppose that $f$ is twice semidifferentiable at $(\bar x,\bar y)$. If for any $h\in T_{ Y}(\bar y)
 \setminus \{0\}$,
\begin{equation}\label{2nddescent}
{\rm d}^2_{yy} f(\bar x,\bar y)(h) -{\rm d}^2 \delta_{X \times Y}((\bar x,\bar y);{\rm d}f(\bar x,\bar y))(0,h)<0,
\end{equation}
 then $(\bar x,\bar y)$ is a calm local minimax point.
\end{proposition}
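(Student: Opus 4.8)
The plan is to invoke Lemma \ref{calmlocamm-equi}: since $(\bar x,\bar y)$ is assumed to be a local minimax point, it suffices to show that the localized solution mapping $S_{\tau(\cdot)}(x):=\argmax_{y\in Y\cap\mathbb{B}_{\tau(\|x-\bar x\|)}(\bar y)}f(x,y)$ is inner calm at $(\bar x,\bar y)$, where $\tau$ is a monotonic radius function witnessing the local minimax property (Lemma \ref{locamm-equi}). Equivalently, I would produce $\kappa>0$ such that for every sequence $x_k\to\bar x$ with $x_k\in X$ the maximizer $y_k\in S_{\tau(\cdot)}(x_k)$ nearest to $\bar y$ satisfies $\|y_k-\bar y\|\le\kappa\|x_k-\bar x\|$ for all large $k$. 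The second-order hypothesis \eqref{2nddescent} is what makes this possible: restricting the defining $\liminf$ of ${\rm d}^2\delta_{X\times Y}((\bar x,\bar y);{\rm d}f(\bar x,\bar y))(0,h)$ to sequences with vanishing $x$-component gives ${\rm d}^2\delta_{X\times Y}((\bar x,\bar y);{\rm d}f(\bar x,\bar y))(0,h)\le{\rm d}^2\delta_{Y}(\bar y;{\rm d}_y f(\bar x,\bar y))(h)$, so \eqref{2nddescent} yields ${\rm d}^2_{yy}f(\bar x,\bar y)(h)-{\rm d}^2\delta_{Y}(\bar y;{\rm d}_y f(\bar x,\bar y))(h)<0$ for every $h\in T_Y(\bar y)\setminus\{0\}$; this is precisely the second-order sufficient condition of Proposition \ref{non-optimality2}(ii) for $\bar y$ to be a local maximizer of $f(\bar x,\cdot)$ on $Y$ with quadratic growth.

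For the rate estimate I would argue by contradiction. If no such $\kappa$ exists, a diagonalization produces $x_k\to\bar x$ in $X$ with $\|y_k-\bar y\|/\|x_k-\bar x\|\to\infty$. Put $\Delta x_k:=x_k-\bar x$, $\Delta y_k:=y_k-\bar y$, $s_k:=\|(\Delta x_k,\Delta y_k)\|$ and $w_k:=(\Delta x_k,\Delta y_k)/s_k$; along a subsequence $w_k\to(0,h)$ with $\|h\|=1$, and since $\|y_k-\bar y\|\le\tau(\|x_k-\bar x\|)\to0$ with $y_k\in Y$ we get $h\in T_Y(\bar y)$. As $\bar y$ is a local maximizer of $f(\bar x,\cdot)$ on $Y$ (part of the local minimax property, Proposition \ref{non-optimality}(i)), ${\rm d}_y f(\bar x,\bar y)(h)\le0$; and since $\bar y$ is feasible for the localized inner problem, $f(x_k,y_k)\ge f(x_k,\bar y)$, so a first-order expansion via semidifferentiability of $f$ gives $0\le s_k\,{\rm d}_y f(\bar x,\bar y)(h)+o(s_k)$, forcing ${\rm d}_y f(\bar x,\bar y)(h)\ge0$; hence ${\rm d}_y f(\bar x,\bar y)(h)=0$.

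Now, pushing to second order, I would expand both $f(x_k,y_k)$ and $f(x_k,\bar y)$ to order $s_k^2$ using twice semidifferentiability at $(\bar x,\bar y)$; the first-order contributions in $\Delta x_k$ cancel, and with $h_k':=\Delta y_k/s_k\to h$ one obtains
\[
0\ \le\ f(x_k,y_k)-f(x_k,\bar y)\ =\ s_k\,{\rm d}_y f(\bar x,\bar y)(h_k')+\tfrac12 s_k^2\,{\rm d}^2_{yy}f(\bar x,\bar y)(h)+o(s_k^2).
\]
Dividing by $\tfrac12 s_k^2$, rearranging and taking $\liminf_k$ gives $\liminf_k\frac{-2\,{\rm d}_y f(\bar x,\bar y)(h_k')}{s_k}\le{\rm d}^2_{yy}f(\bar x,\bar y)(h)$. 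Since $\bar y+s_k h_k'=y_k\in Y$ and $h_k'\to h$, the pair $(s_k,h_k')$ is admissible in the $\liminf$ defining ${\rm d}^2\delta_{Y}(\bar y;{\rm d}_y f(\bar x,\bar y))(h)$, whence ${\rm d}^2\delta_{Y}(\bar y;{\rm d}_y f(\bar x,\bar y))(h)\le{\rm d}^2_{yy}f(\bar x,\bar y)(h)$, i.e.\ ${\rm d}^2_{yy}f(\bar x,\bar y)(h)-{\rm d}^2\delta_{Y}(\bar y;{\rm d}_y f(\bar x,\bar y))(h)\ge0$. Together with the inequality ${\rm d}^2\delta_{X\times Y}((\bar x,\bar y);{\rm d}f(\bar x,\bar y))(0,h)\le{\rm d}^2\delta_{Y}(\bar y;{\rm d}_y f(\bar x,\bar y))(h)$ noted above, this contradicts \eqref{2nddescent} (valid since $h\in T_Y(\bar y)\setminus\{0\}$). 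Hence the rate estimate holds, $S_{\tau(\cdot)}$ is inner calm at $(\bar x,\bar y)$, and Lemma \ref{calmlocamm-equi} yields that $(\bar x,\bar y)$ is a calm local minimax point.

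I expect the delicate step to be the second-order bookkeeping: to retain the term $s_k\,{\rm d}_y f(\bar x,\bar y)(h_k')$ — the quantity that couples to ${\rm d}^2\delta_{Y}(\bar y;{\rm d}_y f(\bar x,\bar y))(h)$ at the $s_k^2$ scale — one needs the first-order contributions in $\Delta x_k$ from the expansions of $f(x_k,y_k)$ and $f(x_k,\bar y)$ to cancel \emph{exactly}, which uses the additive splitting ${\rm d}f(\bar x,\bar y)(u,h)={\rm d}_x f(\bar x,\bar y)(u)+{\rm d}_y f(\bar x,\bar y)(h)$ (immediate when $f$ is differentiable at $(\bar x,\bar y)$, and available more generally under the assumptions of Proposition \ref{separation}). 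Minor points are the existence of the localized maximizers $y_k$ (e.g.\ under upper semicontinuity of $f$ near $(\bar x,\bar y)$) and the passage between ${\rm d}^2\delta_{X\times Y}$ and ${\rm d}^2\delta_{Y}$, both of which are routine.
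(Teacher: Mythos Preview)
Your overall strategy---contradiction plus Lemma \ref{calmlocamm-equi}---matches the paper's. The gap is the one you flag yourself as ``delicate'': your second-order step compares $f(x_k,y_k)$ to $f(x_k,\bar y)$, and to make the first-order $x$-contributions cancel exactly you need ${\rm d}f(\bar x,\bar y)(u,h)={\rm d}_xf(\bar x,\bar y)(u)+{\rm d}_yf(\bar x,\bar y)(h)$. But Proposition \ref{local-to-lip} assumes only twice semidifferentiability of $f$, not the separation property (nor differentiability). Without separation, the difference ${\rm d}f(\bar x,\bar y)(u_k',h_k')-{\rm d}_xf(\bar x,\bar y)(u_k')$ need not equal ${\rm d}_yf(\bar x,\bar y)(h_k')$, and the residual error, after dividing by $s_k$, is not controlled (it scales like $\|u_k'\|/s_k=\|x_k-\bar x\|/s_k^2$, which can be arbitrary). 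So as written, your argument only proves the proposition under an extra hypothesis.

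The paper sidesteps this entirely by comparing $f(x_k,y_k)$ to $f(\bar x,\bar y)$ rather than to $f(x_k,\bar y)$: the local minimax inequality \eqref{localmm} gives $f(x_k,y_k)\ge f(\bar x,\bar y)$ directly, since $y_k$ attains the localized maximum. Then a single twice-semidifferentiable expansion at $(\bar x,\bar y)$ yields
\[
0\ \le\ {\rm d}^2 f(\bar x,\bar y)(0,h)\ +\ \limsup_{k}\frac{2\,{\rm d}f(\bar x,\bar y)(u_k,h_k)}{t_k},
\]
and because $(\bar x+t_ku_k,\bar y+t_kh_k)=(x_k,y_k)\in X\times Y$, the last $\limsup$ is bounded above by $-{\rm d}^2\delta_{X\times Y}((\bar x,\bar y);{\rm d}f(\bar x,\bar y))(0,h)$ straight from the definition. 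Combined with ${\rm d}^2 f(\bar x,\bar y)(0,h)={\rm d}^2_{yy}f(\bar x,\bar y)(h)$ (equation \eqref{2nd-cal}), this contradicts \eqref{2nddescent} with no splitting of ${\rm d}f$ required. Your inequality ${\rm d}^2\delta_{X\times Y}(\cdot)(0,h)\le{\rm d}^2\delta_{Y}(\cdot)(h)$ is correct but unnecessary here; the point of formulating \eqref{2nddescent} with ${\rm d}^2\delta_{X\times Y}$ is precisely to absorb the full first-order term ${\rm d}f(\bar x,\bar y)(u_k,h_k)$ without separating it.
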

\begin{proof}
Suppose that $(\bar x,\bar y)$ is a local minimax point. Then  there exist a $\delta_0>0$ and a function $\tau: \mathbb{R}_{+} \rightarrow \mathbb{R}_{+}$ satisfying $\tau(\delta) \rightarrow 0$ as $\delta \rightarrow 0$, such that for any $\delta \in\left(0, \delta_0\right]$ and any $x \in X$ satisfying $\|x-\bar{x}\| \leq \delta$, we have
\begin{equation}\label{contra}
f(\bar{x}, \bar{y}) \leq \max _{y' \in  Y\cap \mathbb{B}_{\tau( \delta)}(\bar y)} f (x, y^{\prime} ).
\end{equation}
By Lemma \ref{calmlocamm-equi}, to prove that $(\bar x,\bar y)$ is a calm local minimax point, we only need to show that there exists $\kappa>0$ such that for any $x_k \to \bar x$ with $x_k \in X$ and any $y_k \in S_{\tau(\cdot)}(x_k),$
we have $\|y_k-\bar y\| \leq \kappa \|x_k-\bar x\|$.
To the contrary, suppose that for some $x_k \to \bar x$ with $x_k \in X$, there exist $y_k \in S_{\tau(\cdot)}(x_k)$ such that $\|y_k-\bar y\|/\|x_k-\bar x\|$ diverges.

First, since $\|y_k - \bar y\|\leq \tau(\|x_k-\bar x\|)$ for any $k$
, we have $y_k \to \bar y$. Let $h_k:=(y_k-\bar y)/\|y_k-\bar y\|$ for all $k \in \mathbb{N}$. Then $\|h_k\|=1$, and thus without loss of generality, we can assume that $h_k \rightarrow h$. Set $t_k: =\|y_k-\bar y\|$ for all $k \in \mathbb{N}$, we have $t_k \downarrow 0$ and $y_k = \bar y + t_k h_k$, which implies $h \in T_{Y}(\bar y) \setminus \{0\}$.
Denote $u_k :=(x_k - \bar x)/\|y_k-\bar y\|$, then $x_k=\bar x +t_k u_k$. Since $\|y_k-\bar y\|/\|x_k-\bar x\| \rightarrow \infty$, we have $\|u_k\|=\|x_k-\bar x\|/\|y_k-\bar y\| \to 0$ and thus $u_k \to 0=u$.

By Proposition \ref{non-optimality} (i), since $\bar y$ is a local maximizer of $f(\bar x,y)$ on $Y$, ${\rm d}_y f (\bar x, \bar y )(h') \leq 0$ for any $h'\in T_Y({\bar y})$.
Since $(\bar x, \bar y)$ is a local minimax point,
\begin{eqnarray*}\label{}
\begin{aligned}
0 & \leq \limsup\limits_{k \to \infty} \frac{f (\bar{x}+t_k u_k, \bar{y}+t_kh_k )-f(\bar{x}, \bar{y})}{\frac{1}{2}t_k^2}  \qquad \mbox{ by (\ref{contra})}, \\
& =\limsup\limits_{k \to \infty} \frac{f (\bar{x}+t_k u_k, \bar{y}+t_kh_k )-f(\bar{x}, \bar{y})-t_k{\rm d}f(\bar{x}, \bar{y})(u_k,h_k)+t_k{\rm d}f(\bar{x}, \bar{y})(u_k,h_k)}{\frac{1}{2}t_k^2} \\
& = {\rm d}^2 f(\bar x,\bar y)(0,h)
+ \limsup\limits_{k \to \infty} \frac{t_k{\rm d}f(\bar{x}, \bar{y})(u_k,h_k)}{\frac{1}{2}t_k^2}  \qquad \mbox{since $f$ is twice semidifferentiable }\\
& \leq {\rm d}^2 f(\bar x,\bar y)(0,h) + \limsup\limits_{{t \downarrow 0,(u',h') \to (0,h)} \atop {\bar x+tu'\in X, \bar y+th'\in Y}} \frac{t{\rm d} f(\bar{x}, \bar{y})(u',h')}{\frac{1}{2}t^2} \\
& = {\rm d}^2 f(\bar x,\bar y)(0,h) - \liminf\limits_{{t \downarrow 0,(u',h') \to (0,h)} \atop {\bar x+tu'\in X, \bar y+th'\in Y}} \frac{-t{\rm d} f(\bar{x}, \bar{y})(u',h')}{\frac{1}{2}t^2} \\
&= {\rm d}^2_{yy} f(\bar x,\bar y)(h) -{\rm d}^2 \delta_{X \times Y}((\bar x,\bar y);{\rm d}f(\bar x,\bar y))(0,h),
\end{aligned}
\end{eqnarray*}
where the last equality follows from \eqref{2nd-cal}, a contradiction to \eqref{2nddescent}.
\end{proof}

{When $f$ is differentiable and $X$ is convex polyhedral, the condition \eqref{2nddescent} will reduce to the weak sufficient condition for the local optimality. }

\begin{corollary}\label{local-to-lip2}
Let $(\bar x,\bar y)$ be a local minimax point of problem \eqref{minimax}. Suppose that $f$ is twice semidifferentiable at $(\bar x,\bar y)$, {the separation property holds for the subderivative of $f$ at $(\bar x,\bar y)$,} and for any $h\in T_{ Y}(\bar y) \cap \{h'|{\rm d}_y f(\bar x,\bar y)(h')=0\}\setminus \{0\}$,
\begin{equation}\label{2nddescentnew}
{\rm d}^2_{yy} f(\bar x,\bar y)(h) -{\rm d}^2 \delta_{X }(\bar x;{\rm d}_xf(\bar x,\bar y))(0) -{\rm d}^2 \delta_{Y }(\bar y;{\rm d}_yf(\bar x,\bar y))(h)<0,
\end{equation}
and either ${\rm d}^2 \delta_{X }(\bar x;{\rm d}_xf(\bar x,\bar y))(0)$ or ${\rm d}^2 \delta_{Y }(\bar y;{\rm d}_yf(\bar x,\bar y))(h)$ is finite, then $(\bar x, \bar y)$ is a calm local minimax point. Particularly, if $f$ is differentiable and $X$ is convex polyhedral, then ${\rm d}^2 \delta_{X }(\bar x;{\rm d}_xf(\bar x,\bar y))(0)=0$ and the condition (\ref{2nddescentnew}) is the weak sufficient condition for local optimality of the problem $\max_{y\in Y}f(\bar x,y)$ at $\bar y$.
\end{corollary}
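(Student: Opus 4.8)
The plan is to deduce Corollary~\ref{local-to-lip2} from Proposition~\ref{local-to-lip}: it is enough to show that the present hypotheses force condition~\eqref{2nddescent} for every $h\in T_Y(\bar y)\setminus\{0\}$. First I would dispose of the directions that cost nothing. Because $(\bar x,\bar y)$ is a local minimax point, $\bar y$ is a local maximizer of $f(\bar x,\cdot)$ on $Y$; since $f$ is twice semidifferentiable at $(\bar x,\bar y)$, the restriction $f(\bar x,\cdot)$ is semidifferentiable at $\bar y$, so Proposition~\ref{non-optimality}(i) applied to $-f(\bar x,\cdot)$ together with \eqref{eqn2.2new} gives ${\rm d}_y f(\bar x,\bar y)(h)\le 0$ for all $h\in T_Y(\bar y)$. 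By the separation property \eqref{d=dx+dy}, ${\rm d} f(\bar x,\bar y)(0,h)={\rm d}_y f(\bar x,\bar y)(h)$, so whenever $h\in T_Y(\bar y)$ satisfies ${\rm d}_y f(\bar x,\bar y)(h)<0$, formula~\eqref{infinity} yields ${\rm d}^2\delta_{X\times Y}((\bar x,\bar y);{\rm d} f(\bar x,\bar y))(0,h)=\infty$, so \eqref{2nddescent} holds automatically (its left side is $-\infty$, as ${\rm d}^2_{yy}f(\bar x,\bar y)(h)$ is finite). Thus the only directions that matter are $h\in T_Y(\bar y)\cap\{h'\mid{\rm d}_y f(\bar x,\bar y)(h')=0\}\setminus\{0\}$, which is exactly the set on which \eqref{2nddescentnew} is assumed.

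On these directions the crux is the superadditivity estimate
\[
{\rm d}^2\delta_{X\times Y}\big((\bar x,\bar y);{\rm d} f(\bar x,\bar y)\big)(0,h)\ \ge\ {\rm d}^2\delta_{X}\big(\bar x;{\rm d}_x f(\bar x,\bar y)\big)(0)+{\rm d}^2\delta_{Y}\big(\bar y;{\rm d}_y f(\bar x,\bar y)\big)(h),
\]
which I would prove straight from Definition~\ref{Defn2.7}. For any $t_k\downarrow 0$, $u_k\to 0$, $h_k\to h$ with $\bar x+t_k u_k\in X$ and $\bar y+t_k h_k\in Y$, the separation property splits the difference quotient $-2{\rm d} f(\bar x,\bar y)(u_k,h_k)/t_k$ into $-2{\rm d}_x f(\bar x,\bar y)(u_k)/t_k-2{\rm d}_y f(\bar x,\bar y)(h_k)/t_k$; the first term runs over a sequence admissible for ${\rm d}^2\delta_X(\bar x;{\rm d}_x f(\bar x,\bar y))(0)$ (with $w=0$) and the second over one admissible for ${\rm d}^2\delta_Y(\bar y;{\rm d}_y f(\bar x,\bar y))(h)$, so passing to $\liminf$ and using its superadditivity gives the estimate. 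Feeding this into \eqref{2nddescentnew} produces \eqref{2nddescent}, and Proposition~\ref{local-to-lip} then yields that $(\bar x,\bar y)$ is a calm local minimax point. The delicate point---and the main obstacle---is that $\liminf$ is superadditive only when the sum of the two limit-infima is not of the form $\infty-\infty$; this is precisely where the hypothesis that one of ${\rm d}^2\delta_X(\bar x;{\rm d}_x f(\bar x,\bar y))(0)$, ${\rm d}^2\delta_Y(\bar y;{\rm d}_y f(\bar x,\bar y))(h)$ is finite enters essentially, to be handled by a short case analysis (if, say, the $X$-term is finite, the $X$-summand is bounded below, and the only way the $Y$-summand can drift to $-\infty$ is for the $Y$-term itself to be $-\infty$, in which case the asserted inequality is trivial).

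For the ``particularly'' statement, take $f$ differentiable and $X$ convex polyhedral. Then ${\rm d}_x f(\bar x,\bar y)(u)=\nabla_x f(\bar x,\bar y)^{T}u$, and since $w=0$ lies in $T_X(\bar x)\cap\{{\rm d}_x f(\bar x,\bar y)\}^\perp$, the $w=0$ instance of Proposition~\ref{Calculation}(ii) evaluates ${\rm d}^2\delta_X(\bar x;{\rm d}_x f(\bar x,\bar y))(0)=0$, so \eqref{2nddescentnew} reduces to ${\rm d}^2_{yy}f(\bar x,\bar y)(h)-{\rm d}^2\delta_Y(\bar y;{\rm d}_y f(\bar x,\bar y))(h)<0$ on $T_Y(\bar y)\cap\{h'\mid{\rm d}_y f(\bar x,\bar y)(h')=0\}\setminus\{0\}$. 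Setting $\phi:=-f(\bar x,\cdot)$, which is twice semidifferentiable at $\bar y$, relations \eqref{eqn2.2new} and \eqref{twice-epi} rewrite this as ${\rm d}^2\phi(\bar y)(h)+{\rm d}^2\delta_Y(\bar y;-{\rm d}\phi(\bar y))(h)>0$ on the critical cone $T_Y(\bar y)\cap\{h'\mid{\rm d}\phi(\bar y)(h')=0\}\setminus\{0\}$; combined with the first-order inequality ${\rm d}\phi(\bar y)(h')\ge 0$ on $T_Y(\bar y)$ (automatic, since $\bar y$ is already a local maximizer of $f(\bar x,\cdot)$ on $Y$), this is exactly the second-order sufficient condition of Proposition~\ref{non-optimality2}(ii) for $\bar y$ to be a local minimizer of $\phi=-f(\bar x,\cdot)$ on $Y$, i.e.\ the weak sufficient condition for local optimality of $\max_{y\in Y} f(\bar x,y)$ at $\bar y$.
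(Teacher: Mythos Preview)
Your argument is correct and follows the paper's strategy: reduce to Proposition~\ref{local-to-lip} by verifying \eqref{2nddescent} on all of $T_Y(\bar y)\setminus\{0\}$, using the same superadditivity estimate (from the separation property and the liminf inequality, with the finiteness hypothesis ruling out the $\infty-\infty$ obstruction) on the critical directions, and invoking Proposition~\ref{Calculation}(ii) for the ``particularly'' clause. The one point where you proceed differently is the disposal of directions with ${\rm d}_y f(\bar x,\bar y)(h)<0$: you invoke \eqref{infinity} to get ${\rm d}^2\delta_{X\times Y}((\bar x,\bar y);{\rm d} f(\bar x,\bar y))(0,h)=\infty$ and hence \eqref{2nddescent} trivially, whereas the paper instead reopens the contradiction argument inside Proposition~\ref{local-to-lip} and shows that the limiting direction $h$ arising there must satisfy ${\rm d}_y f(\bar x,\bar y)(h)=0$ (using $f(x_k,y_k)\ge f(x_k,\bar y)$, since $y_k$ maximizes over a ball containing $\bar y$, together with the separation property). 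Your route is more self-contained, as it avoids revisiting the internal sequences of Proposition~\ref{local-to-lip}; the paper's route yields the slightly stronger information that the offending direction is automatically critical, but both reach the same conclusion.
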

\begin{proof}
Since the separation property holds for the subderivative of $f$ at $(\bar x,\bar y)$, by (\ref{d=dx+dy})
we have
\begin{eqnarray*}
{\rm d}^2 \delta_{X\times Y}((\bar x,\bar y);{\rm d}f(\bar x,\bar y))(u,h)&=& \liminf\limits_{{t\downarrow 0,(u',h')\to (u,h)} \atop {\bar{x}+tu' \in X, \bar{y}+th' \in Y}} \frac{-2{\rm d}_xf(\bar x,\bar y)(u')-2{\rm d}_yf(\bar x,\bar y)(h')}{t}\\
&\geq & \liminf\limits_{{t\downarrow 0,u'\rightarrow u} \atop {\bar{x}+tu' \in X}} \frac{-2{\rm d}_x f(\bar x,\bar y)(u')}{t} +\liminf\limits_{{t\downarrow 0,h'\rightarrow h} \atop {\bar{y}+th' \in Y}} \frac{-2{\rm d}_y f(\bar x,\bar y)(h')}{t}\\
&=& {\rm d}^2 \delta_{X}(\bar x;{\rm d}_x f(\bar x,\bar y))(u)+{\rm d}^2 \delta_{Y}(\bar y;{\rm d}_y f(\bar x,\bar y))(h),
\end{eqnarray*}
where the inequality holds since the assumption that either ${\rm d}^2 \delta_{X }(\bar x;{\rm d}_xf(\bar x,\bar y))(0)$ or ${\rm d}^2 \delta_{Y }(\bar y;{\rm d}_yf(\bar x,\bar y))(h)$ is finite. It follows that
$$-{\rm d}^2 \delta_{X \times Y}((\bar x,\bar y);{\rm d}f(\bar x,\bar y))(0,h)\leq -{\rm d}^2 \delta_{X}(\bar x;{\rm d}_x f(\bar x,\bar y))(u)-{\rm d}^2 \delta_{Y}(\bar y;{\rm d}_y f(\bar x,\bar y))(h).$$ Hence (\ref{2nddescentnew}) implies (\ref{2nddescent}). By Proposition \ref{local-to-lip}, the only proof left is to show that for any  $h\in T_Y(\bar y)$, we must have ${\rm d}_yf(\bar x,\bar y)(h)= 0$.  Since $\bar y$ is a local maximizer of $f(\bar x,y)$ over $Y$, we always have ${\rm d}_yf(\bar x,\bar y)(h)\leq 0$. Moreover,
we have
\begin{equation*}
{\rm d} f (\bar x, \bar y )(u,h)
= \lim\limits_{k\to \infty} \frac{f(x_k, y_k)-f(\bar x,\bar y)}{t_k}
\geq \lim\limits_{k\to \infty} \frac{f (x_k, \bar y )-f (\bar x, \bar y )}{t_k}
= {\rm d}_x f (\bar x, \bar y)(u).
\end{equation*}
Hence together with (\ref{d=dx+dy}) we must have ${\rm d}_yf(\bar x,\bar y)(h)= 0$ and hence the proof of the corollary is complete.
Particularly, by Proposition \ref{Calculation}, if $f$ is differentiable and $X$ is convex polyhedral, then ${\rm d}^2 \delta_{X }(\bar x;{\rm d}_xf(\bar x,\bar y))(0)=0$.
\end{proof}

To end this section, we summarize relationships between Nash equilibrium, local Nash equilibrium, calm local minimax points, local minimax points, and global minimax points.
Note that unlike in nonconvex optimization where global optima are always local optima (thus local optima always exist if global optima exist), without additional assumptions, a global minimax point may not be a local minimax point (and thus may not be a calm local minimax point) \cite[Example 3.4]{JC22}.

{\bf{Relations in the general case:}}
$$\mathcal{D}_{Nash} \subset \mathcal{D}_{localNash} \subset  \mathcal{D}_{calm-local} \subset \mathcal{D}_{local},\quad \mathcal{D}_{Nash} \subset \mathcal{D}_{global}.$$

{\bf{Relations for the smooth minimax problems under the weak sufficient condition for local optimality of the maximization problem} (see Corollary \ref{local-to-lip2}):}
$$\mathcal{D}_{Nash} \subset \mathcal{D}_{calm-local} = \mathcal{D}_{local}, \quad \mathcal{D}_{Nash} \subset \mathcal{D}_{global}.$$

{\bf{Relations for the smooth nonconvex-strongly-concave case (see Corollary \ref{corollary-strong-concave}):}}
$$\mathcal{D}_{Nash} \subset \mathcal{D}_{global} \subset \mathcal{D}_{calm-local} = \mathcal{D}_{local}.$$

Here, the notation ``$\subset$" entails that the inclusion can be strict. The notations $\mathcal{D}_{Nash}, \mathcal{D}_{localNash}, \mathcal{D}_{calm-local}, \mathcal{D}_{local}$, and $\mathcal{D}_{global}$ denote the set of Nash equilibrium, local Nash equilibrium, calm local minimax points, local minimax points, and global minimax points, respectively.

\section{Optimality conditions for the minimax problem}\label{nonsmooth-section}

In this section, we give first-order and second-order optimality conditions for the minimax problem (Min-Max).

\subsection{First-order optimality conditions for minimax problems}

First-order optimality conditions in the primal form are given as follows.

\begin{theorem}[first-order optimality conditions in primary form]\label{1st-non}
Let $(\bar x, \bar y) \in X \times  Y$.
\begin{itemize}
\item[(a)] Suppose that
   \begin{align}
  {\rm d}_xf(\bar x,\bar y)(u)>0 \quad &\forall\, u\in T_{X}(\bar x) \setminus \{0\}, \label{1stSuf-x-non} \\
  {\rm d}_y^+ f(\bar x,\bar y)(h)<0 \quad  &\forall\, h \in T_{ Y}(\bar y) \setminus \{0\}. \label{1stSuf-y-non-1}
   \end{align}
Then $(\bar x, \bar y)$ is a local Nash equilibrium and hence a calm local minimax point to problem \eqref{minimax}.
\item[(b)] Suppose that either $Y$ is the whole space or $f(\bar x, \cdot)$ is Lipschitz continuous around $\bar y$. Suppose further that $f(x,\cdot)$ is continuous for any $x \in X$ near $\bar x$.
If $(\bar x, \bar y)$ is a calm local minimax point to the minimax problem \eqref{minimax}, then
for any $u \in T_X(\bar{x})$, there exists $h \in T_Y(\bar{y})$ such that
\begin{eqnarray}\label{x-non}
&& {\rm d}^+f(\bar{x}, \bar{y})(u,h) \geq 0.
\end{eqnarray}
Besides,
\begin{equation}\label{y-non}
{\rm d}_y^+ f(\bar{x}, \bar{y})(h) \leq 0\quad \forall\, h \in T_Y(\bar{y}).
\end{equation}
\item[(c)] Suppose that either $Y$ is the whole space or $f(\bar x, \cdot)$ is Lipschitz continuous around $\bar y$. Moreover assume that $f$ is  semidifferentiable and the separation property holds for
the subderivative  at $(\bar x,\bar y)$.
If $(\bar x, \bar y)$ is a calm local minimax point to the minimax problem \eqref{minimax}, then
\begin{align}
  {\rm d}_x f(\bar x,\bar y)(u)\geq 0 \quad &\forall\, u\in T_{X}(\bar x), \label{1stSuf-x-non-new} \\
{\rm d}_y  f(\bar x,\bar y)(h)\leq 0 \quad  &\forall\, h \in T_{ Y}(\bar y) . \label{1stSuf-y-non-1-new}
   \end{align}
\end{itemize}
\end{theorem}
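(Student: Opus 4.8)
The plan is to handle each of the three parts separately, using the preliminary optimality conditions established in Section 2 together with the equivalent descriptions of calm local minimax points from Section 3.

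\medskip

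\textbf{Part (a).} The plan is to show directly that $(\bar x,\bar y)$ is a local Nash equilibrium, from which the conclusion follows by the chain of inclusions $\mathcal{D}_{localNash}\subset\mathcal{D}_{calm\text{-}local}$ established at the end of Section 3. First I would apply Proposition \ref{non-optimality}(ii) to the function $f(\cdot,\bar y)$ on the set $X$: the hypothesis \eqref{1stSuf-x-non} says precisely that ${\rm d}_x f(\bar x,\bar y)(u)>0$ for all $u\in T_X(\bar x)\setminus\{0\}$, so $\bar x$ is a local minimizer of $f(\cdot,\bar y)$ on $X$. Symmetrically, for the maximization in $y$ I would apply the same proposition to $-f(\bar x,\cdot)$ on $Y$: by \eqref{eqn2.2new}--type reasoning ${\rm d}(-f(\bar x,\cdot))(\bar y)(h)=-{\rm d}^+ f(\bar x,\cdot)(\bar y)(h)$ when $f$ is semidifferentiable, but in general one uses that ${\rm d}_y^+ f(\bar x,\bar y)(h)<0$ on $T_Y(\bar y)\setminus\{0\}$ is exactly the subderivative condition for $-f(\bar x,\cdot)$; hence $\bar y$ is a local maximizer of $f(\bar x,\cdot)$ on $Y$. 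Combining the two gives, for some $\delta>0$ and all $x\in X\cap\mathbb{B}_\delta(\bar x)$, $y\in Y\cap\mathbb{B}_\delta(\bar y)$, the inequalities $f(\bar x,y)\le f(\bar x,\bar y)\le f(x,\bar y)$, i.e.\ $(\bar x,\bar y)$ is a local Nash equilibrium.

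\medskip

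\textbf{Part (b).} Here the plan is to unpack the equivalent definition of a calm local minimax point from Proposition \ref{equiv-calm-local}: there exist $\kappa>0$ and $\delta_0>0$ such that \eqref{equiv} holds. The inequality $f(\bar x,y)\le f(\bar x,\bar y)$ for all $y\in Y\cap\mathbb{B}_\delta(\bar y)$ immediately gives, upon dividing by $t$ and passing to the limsup along $y=\bar y+th'$, $h'\to h$, the bound \eqref{y-non}: ${\rm d}_y^+ f(\bar x,\bar y)(h)\le 0$ for all $h\in T_Y(\bar y)$. For \eqref{x-non}, fix $u\in T_X(\bar x)$ and take $t_k\downarrow 0$, $u_k\to u$ with $\bar x+t_k u_k\in X$; for $k$ large, $\|t_k u_k\|\le\delta_0$, and from \eqref{equiv} with $\delta=\|t_k u_k\|$ (or any $\delta\ge t_k\|u_k\|$) we get $f(\bar x,\bar y)\le\max_{y'\in Y\cap\mathbb{B}_{\kappa\delta}(\bar y)}f(\bar x+t_k u_k,y')$. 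The hypothesis that $Y$ is the whole space or $f(\bar x,\cdot)$ is Lipschitz around $\bar y$, together with continuity of $f(x,\cdot)$ near $\bar x$, lets me extract a maximizer $y_k'\in Y\cap\mathbb{B}_{\kappa\delta}(\bar y)$ with $y_k'\to\bar y$ (the ball radius $\kappa\|t_k u_k\|\to 0$), write $y_k'=\bar y+t_k h_k$ with $h_k$ bounded (by $\kappa\|u_k\|$), pass to a subsequence $h_k\to h\in T_Y(\bar y)$, and conclude ${\rm d}^+ f(\bar x,\bar y)(u,h)\ge\limsup_k\bigl(f(\bar x+t_k u_k,\bar y+t_k h_k)-f(\bar x,\bar y)\bigr)/t_k\ge 0$.

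\medskip

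\textbf{Part (c).} The plan is to refine part (b) under the extra semidifferentiability and separation hypotheses. Semidifferentiability of $f$ at $(\bar x,\bar y)$ means ${\rm d}^+f={\rm d}f$ and in particular ${\rm d}_y^+ f(\bar x,\bar y)={\rm d}_y f(\bar x,\bar y)$, so \eqref{1stSuf-y-non-1-new} is immediate from \eqref{y-non}. For \eqref{1stSuf-x-non-new}, I would take the $h\in T_Y(\bar y)$ produced by \eqref{x-non} for the given $u$ and invoke the separation property \eqref{d=dx+dy}: $0\le{\rm d}f(\bar x,\bar y)(u,h)={\rm d}_x f(\bar x,\bar y)(u)+{\rm d}_y f(\bar x,\bar y)(h)\le{\rm d}_x f(\bar x,\bar y)(u)$, where the last step uses \eqref{1stSuf-y-non-1-new} since $h\in T_Y(\bar y)$. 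This yields ${\rm d}_x f(\bar x,\bar y)(u)\ge 0$ for every $u\in T_X(\bar x)$.

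\medskip

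\textbf{Main obstacle.} I expect the delicate point to be the argument in part (b) extracting a convergent direction $h_k\to h\in T_Y(\bar y)$ from the maximizers of the localized value function, and in particular verifying that the maximizers exist and lie in $\mathbb{B}_{\kappa\delta}(\bar y)$ with $\|y_k'-\bar y\|/t_k$ bounded --- this is exactly where the calmness of the radius function (equivalently, the bounded ratio $\kappa$) is essential, and where the Lipschitz-or-whole-space assumption on $f(\bar x,\cdot)$ is needed to guarantee attainment of the maximum over the compact localized feasible set; without calmness one could only bound $\|y_k'-\bar y\|$ by $\tau(t_k\|u_k\|)$, which need not be $O(t_k)$, and the limiting direction $h$ would escape to infinity.
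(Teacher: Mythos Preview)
Your proposal is correct and follows essentially the same approach as the paper in all three parts. One small clarification in part (b): the ``$Y$ is the whole space or $f(\bar x,\cdot)$ is Lipschitz'' hypothesis is actually what drives the $y$-condition \eqref{y-non} (via Proposition~\ref{thm2}(a) in the unconstrained case or Proposition~\ref{non-optimality}(i) in the Lipschitz case, applied to $-f(\bar x,\cdot)$), not the extraction of the maximizer --- the latter follows from the separate continuity assumption on $f(x,\cdot)$ together with compactness of $Y\cap\mathbb{B}_{\kappa\delta}(\bar y)$.
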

\begin{proof}

(a) By Proposition \ref{non-optimality} (ii) and the fact that ${\rm d}_y^+f(\bar x,\bar y)(h)=-{\rm d}_y(-f)(\bar x,\bar y)(h)$, we have that $\bar x$ is a local minimizer of $f(\cdot, \bar y)$ on $X$ and $\bar y$ is a local maximizer of $f(\bar x,\cdot)$ on $Y$. Thus, $(\bar x, \bar y)$ is a local Nash equilibrium to problem \eqref{minimax}.

(b) First, since $\bar y$ is a local maximum point of $f(\bar x,\cdot)$ on $Y$, \eqref{y-non} or equivalently, {${\rm d}_y(-f)(\bar x,\bar y)(h)\geq 0$} follows from
Proposition \ref{thm2} (i) (when $Y$ is the whole space) or Proposition \ref{non-optimality} (i) (when $f(\bar x, \cdot)$ is Lipschitz continuous around $\bar y$).
We now prove (\ref{x-non}). For this purpose we let $u \in T_X(\bar{x})$. Then there exist $t_k \downarrow 0, u_k \to u$ such that $x_k:= \bar x+t_ku_k \in X$. Take $\delta_k:=\|x_k-\bar x\|$. Then since $(\bar x,\bar y)$ is a calm local minimax point, there exist $\kappa >0$ and a sequence
$$y_k \in \argmax _{y \in Y\cap \mathbb{B}_{\tau(\delta_k)}(\bar{y})
	} f (x_k, y),$$
where $\tau(\delta)$ is the function defined in the definition of the calm local minimax point,
such that $\|y_k-\bar y\| \leq \kappa \|x_k-\bar x\|$ for sufficiently large $k$. Thus, by passing to a subsequence if necessary (without relabeling), there exists $h\in \mathbb{R}^m$ such that $h_k:=(y_k-\bar y)/t_k \to h$. By the definition of the contingent cone, we have $h\in T_{ Y}(\bar y)$.
Hence we have
\begin{eqnarray*}
\begin{aligned}
0 & \leq \lim\limits_{k \to \infty} \frac{f (x_k, y_k )-f(\bar{x}, \bar{y})}{t_k} \qquad \mbox{ by (\ref{localmm})}\\
& \leq \limsup\limits_{{t \downarrow 0} \atop {(u',h') \to (u,h)}} \frac{f (\bar{x}+tu', \bar{y}+th' )-f(\bar{x}, \bar{y})}{t} \\
& = {\rm d}^+ f(\bar x,\bar y)(u,h).
\end{aligned}
\end{eqnarray*}

(c) Since $f$ is semidifferentiable, ${\rm d}^+ f(\bar x,\bar y)(u,h)={\rm d} f(\bar x,\bar y)(u,h)$ and {${\rm d}_y^+ f(\bar{x}, \bar{y})(h)={\rm d}_y f(\bar{x}, \bar{y})(h)$}. Together with the separation property, the results follow from \eqref{x-non} and \eqref{y-non}.
\end{proof}

\begin{remark}

The first-order necessary condition (\ref{1stSuf-x-non-new}) is sharper than the one in Jiang and Chen \cite[(3.2a)-(3.2b)]{JC22} by the analysis in \cite[Remark 3.13]{JC22}. Theorem \ref{1st-non}(c) is stronger than Zhang et al. \cite[Theorem 3.12]{Zhang2022} even in the smooth case since the tangent cone is larger than the inner tangent cone. From the proof of Theorem \ref{1st-non}(a), we can see that (\ref{1stSuf-y-non-1}) is only used to show that $\bar y$ is a local maximizer of $f(\bar x,\cdot)$ on $Y$. Thus even in the smooth case, if we replace (\ref{1stSuf-y-non-1}) by that $\bar y$ is a local maximizer of $f(\bar x,\cdot)$ on $Y$ in Theorem \ref{1st-non}(a), then this sufficient optimality condition is weaker than Zhang et al.
 \cite[Theorem 3.14]{Zhang2022}.

\end{remark}

{Next, we give first-order necessary optimality conditions in the dual form for the minimax problem \eqref{minimax}.}

\begin{theorem}[first-order necessary optimality conditions in the dual form]
Let $(\bar x, \bar y)$ be a calm local minimax point to the minimax problem \eqref{minimax}. Suppose that $f$ is semidifferentiable and Lipschitz continuous around $(\bar x,\bar y)$ and the separation property holds for
the subderivative at $(\bar x,\bar y)$. Then
\begin{equation*}
0 \in \partial_x^\circ f(\bar{x}, \bar{y}) + \widehat{N}_X(\bar x),
\end{equation*}
\begin{equation*}
0 \in -\partial_y^\circ f(\bar{x}, \bar{y}) + \widehat{N}_Y(\bar y).
\end{equation*}
\end{theorem}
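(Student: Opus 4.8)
The plan is to derive the two dual-form inclusions from the primal-form necessary conditions already established in Theorem \ref{1st-non}(c), by dualizing the statements ``${\rm d}_x f(\bar x,\bar y)(u)\ge 0$ for all $u\in T_X(\bar x)$'' and ``${\rm d}_y f(\bar x,\bar y)(h)\le 0$ for all $h\in T_Y(\bar y)$'' using the Clarke generalized gradient together with the characterization \eqref{normalcone-equ} of the regular normal cone. First I would invoke Theorem \ref{1st-non}(c): since $f$ is semidifferentiable and Lipschitz continuous around $(\bar x,\bar y)$ and the separation property holds for the subderivative at $(\bar x,\bar y)$, its hypotheses are met (when $Y$ is not the whole space, Lipschitz continuity of $f$ around $(\bar x,\bar y)$ gives Lipschitz continuity of $f(\bar x,\cdot)$ around $\bar y$), so \eqref{1stSuf-x-non-new} and \eqref{1stSuf-y-non-1-new} hold.

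Next I would handle the $x$-inclusion. Because $f$ is Lipschitz continuous around $(\bar x,\bar y)$, the partial function $f(\cdot,\bar y)$ is Lipschitz around $\bar x$, and since $f$ is semidifferentiable at $(\bar x,\bar y)$ the subderivative ${\rm d}_x f(\bar x,\bar y)(u)$ equals the classical directional derivative, which by the discussion following \eqref{eqn2.2} is dominated by the Clarke generalized directional derivative: ${\rm d}_x f(\bar x,\bar y)(u)\le (f(\cdot,\bar y))^\circ(\bar x;u)=\max_{\xi\in\partial_x^\circ f(\bar x,\bar y)}\langle\xi,u\rangle$ by \eqref{psicirc}. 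Hence \eqref{1stSuf-x-non-new} gives $\max_{\xi\in\partial_x^\circ f(\bar x,\bar y)}\langle\xi,u\rangle\ge 0$ for every $u\in T_X(\bar x)$. This says that the closed convex set $\partial_x^\circ f(\bar x,\bar y)$ meets the set $\{w:\langle w,u\rangle\ge 0\ \forall u\in T_X(\bar x)\}=-\widehat N_X(\bar x)$ (using \eqref{normalcone-equ}) in the sense that for each fixed $u$ some point of the gradient works — but to get a single point I would instead argue by a standard convex separation: if $0\notin\partial_x^\circ f(\bar x,\bar y)+\widehat N_X(\bar x)$, then since both summands are closed convex and $\partial_x^\circ f$ is compact, the sum is closed convex, and there is a vector $u$ strictly separating $0$ from it, i.e. $\langle\xi,u\rangle+\langle n,u\rangle<0$ for all $\xi\in\partial_x^\circ f(\bar x,\bar y)$, $n\in\widehat N_X(\bar x)$; taking the supremum over $n\in\widehat N_X(\bar x)$ forces $u\in T_X(\bar x)$ (else the sup is $+\infty$) and then $\max_\xi\langle\xi,u\rangle<0$, contradicting the displayed inequality. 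Thus $0\in\partial_x^\circ f(\bar x,\bar y)+\widehat N_X(\bar x)$.

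The $y$-inclusion is symmetric: \eqref{1stSuf-y-non-1-new} reads ${\rm d}_y f(\bar x,\bar y)(h)\le 0$ for $h\in T_Y(\bar y)$, equivalently ${\rm d}_y(-f)(\bar x,\bar y)(h)\ge 0$ by \eqref{eqn2.2new}; applying the same argument to $-f(\bar x,\cdot)$, which is Lipschitz around $\bar y$, and using $\partial_y^\circ(-f)(\bar x,\bar y)=-\partial_y^\circ f(\bar x,\bar y)$, yields $0\in-\partial_y^\circ f(\bar x,\bar y)+\widehat N_Y(\bar y)$. The main obstacle — though a routine one — is the passage from the ``for each direction some subgradient works'' statement to the single-inclusion statement; this is exactly a minimax/separation step, and the compactness of the Clarke generalized gradient (a consequence of the local Lipschitz property) is what makes the separating-hyperplane argument go through and guarantees the sum $\partial^\circ f+\widehat N$ is closed. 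I would also double-check that the separation property is genuinely needed here: it is used precisely to ensure the partial subderivatives ${\rm d}_x f(\bar x,\bar y)(\cdot)$ and ${\rm d}_y f(\bar x,\bar y)(\cdot)$ are the relevant objects appearing in Theorem \ref{1st-non}(c), so that the partial Clarke gradients $\partial_x^\circ f$ and $\partial_y^\circ f$ are the correct dual objects.
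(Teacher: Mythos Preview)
Your proposal is correct and follows essentially the same approach as the paper: invoke Theorem~\ref{1st-non}(c) to get the primal conditions, bound the subderivative above by the Clarke generalized directional derivative, and then dualize via \eqref{normalcone-equ}. In fact you are more careful than the paper at the quantifier-swap step---the paper simply asserts ``there exist $\xi\in\partial_x^\circ f(\bar x,\bar y)$ such that $\langle\xi,u\rangle\ge 0$ for all $u\in T_X(\bar x)$'' without justification, whereas you supply the separation argument using compactness of $\partial_x^\circ f$. One small imprecision: your separation only yields $u$ in the bipolar $(\widehat N_X(\bar x))^\circ=\overline{\operatorname{co}}\,T_X(\bar x)$, which can be strictly larger than $T_X(\bar x)$ when the tangent cone is nonconvex; however, since $u\mapsto\max_{\xi}\langle\xi,u\rangle$ is convex and continuous, the inequality $\max_\xi\langle\xi,u\rangle\ge 0$ extends from $T_X(\bar x)$ to its closed convex hull, so the contradiction still goes through.
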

\begin{proof}
By Theorem \ref{1st-non} (c), since $(\bar x, \bar y)$ be a  calm local minimax point, we have
\begin{eqnarray*}
  {\rm d}_x f(\bar x,\bar y)(u)\geq 0 \quad \text { for all } u\in T_{X}(\bar x),
&& {\rm d}_y f(\bar{x}, \bar{y})(h) \leq 0 \text { for all } h \in T_Y(\bar{y}).
\end{eqnarray*}
Since
$f^\circ_x(\bar x,\bar y;u) \geq {\rm d}_x f(\bar x,\bar y)(u)$ and $(-f)^\circ_y(\bar x,\bar y;h)\geq -{\rm d}_y f(\bar{x}, \bar{y})(h) $,
it follows that
\begin{eqnarray*}
 f^\circ_x(\bar x,\bar y;u)\geq 0 \quad \text { for all } u\in T_{X}(\bar x),
&& (-f)^\circ_y(\bar x,\bar y;h) \geq 0 \text { for all } h \in T_Y(\bar{y}).
\end{eqnarray*}
Since for any $u\in \mathbb{R}^n, h\in \mathbb{R}^m$, we have (see e.g. \eqref{psicirc}),
\begin{eqnarray*}
 f^\circ_x(\bar x,\bar y;u)=\max_{\xi \in \partial_x^\circ f(\bar{x}, \bar{y})}\langle \xi, u\rangle,
&&(-f)^\circ_y(\bar x,\bar y;h)=\max_{\zeta \in \partial_y^\circ (-f)(\bar{x}, \bar{y})}\langle \zeta, h\rangle,
\end{eqnarray*} there exist $\xi \in \partial_x^\circ f(\bar{x}, \bar{y})$ and $\zeta \in \partial_y^\circ (-f)(\bar{x}, \bar{y})=-\partial_y^\circ f(\bar{x}, \bar{y})$ such that
\begin{eqnarray*}
\langle \xi, u\rangle \geq 0 \quad \forall u \in T_X(\bar x),&& \langle \zeta, h\rangle \geq 0 \quad \forall h \in T_Y(\bar y).
\end{eqnarray*}
Since the regular normal cone is the negative polar of the tangent cone (see e.g., \eqref{normalcone-equ}),  we have $-\xi\in \widehat{N}_X(\bar x)$, $-\zeta \in \widehat{N}_Y(\bar y)$.
\end{proof}

\subsection{Second-order optimality conditions for minimax problems}
In this section, we give  second-order optimality conditions for the minimax problem \eqref{minimax} for the general case.

\begin{theorem}[Second-order optimality conditions for the constrained minimax problem]\label{main}
Let $(\bar x, \bar y)\in X \times Y$. Suppose that $f$ is twice semidifferentiable at $(\bar x, \bar y)$, the separation property for the subderivative holds at $(\bar x,\bar y)$,  and the value ${\rm d}^2 \delta_X(\bar x;-{\rm d}_xf(\bar x,\bar y))(u)$ is finite for any $u\in T_{X}(\bar x) \cap \{u' | {\rm d}_x f(\bar x,\bar y)(u')  =0 \}$.

\begin{itemize}
\item[(a)]
Suppose that the first-order necessary optimality conditions
 \begin{align}
  {\rm d}_xf(\bar x,\bar y)(u)\geq 0 \quad &\text { for all } u\in T_{X}(\bar x), \label{firstordercond1}\\
{\rm d}_y f(\bar x,\bar y)(h)\leq 0 \quad  &\text { for all } h \in T_{ Y}(\bar y),\label{firstordercond2}
   \end{align}
and the second-order sufficient condition for  problem $\max_{y\in Y} f(\bar x, y)$ holds at $\bar y$, i.e., for any $h\in T_{ Y}(\bar y) \cap \{h'|{\rm d}_y f(\bar x,\bar y)(h')= 0 \} \setminus \{0\}$,
\begin{equation}\label{sufflower}
{\rm d}^2_{yy} f(\bar x,\bar y)(h) -{\rm d}^2 \delta_Y(\bar y;{\rm d}_yf(\bar x,\bar y))(h)<0.
\end{equation}
{ If $\delta_Y$ is twice epi-differentiable at $\bar y$ for ${\rm d}_y f(\bar x,\bar y)$ and   for any $u\in T_{X}(\bar x) \cap \{u' | {\rm d}_x f(\bar x,\bar y)(u') = 0 \} \setminus \{0\}$, there exists $h\in T_ Y({\bar y})$
such that
\begin{equation}\label{cxlower3-non-new}
{\rm d}^2 f(\bar x,\bar y)(u,h) +{\rm d}^2 \delta_X(\bar x;-{\rm d}_xf(\bar x,\bar y))(u)-{\rm d}^2 \delta_Y(\bar y;{\rm d}_yf(\bar x,\bar y))(h) >0,
\end{equation}}
then $(\bar x,\bar y)$ is a calm local minimax point to problem \eqref{minimax} and  the  following second-order growth condition holds: there exist $\delta_0>0$,   $\mu > 0$
 such that  for any $\delta \in (0,\delta_0]$, $x \in X \cap \mathbb{B}_{\delta}(\bar x)$, and $y \in Y\cap \mathbb{B}_{\delta}(\bar y)$, we have
\begin{equation*}
f(\bar x, y)+\varepsilon\|y-\bar y\|^2  \leq f(\bar x, \bar y) \leq \max _{y^{\prime}  \in  Y\cap \mathbb{B}_\delta(\bar y)} f(x, y^{\prime}) - \mu \|x-\bar x\|^2.
\end{equation*}
\item[(b)] Let $(\bar x,\bar y) \in X \times  Y$ be a calm local minimax point to problem \eqref{minimax}. Suppose that either $Y$ is the whole space or $f(\bar x,\cdot)$ is Lipschitz continuous around $\bar y$. Then the first-order necessary optimality conditions (\ref{firstordercond1})-(\ref{firstordercond2}) hold,  the second-order necessary condition for the maximum problem $\max_{y\in Y} f(\bar x, y)$ holds at $\bar y$, i.e., for any $h\in T_{ Y}(\bar y) \cap \{h'|{\rm d}_y f(\bar x,\bar y)(h') = 0\}$, we have
\begin{equation}\label{necelower}
{\rm d}^2_{yy} f(\bar x,\bar y)(h) -{\rm d}^2 \delta_Y(\bar y;{\rm d}_yf(\bar x,\bar y))(h) \leq 0,
\end{equation}
and for any $u \in T_X(\bar x) \cap \{u' | {\rm d}_x f(\bar x,\bar y)(u') = 0\} $, there exists $h\in T_ Y({\bar y})$
such that
 \begin{equation}\label{eqn4.11}
{\rm d}^2 f(\bar x,\bar y)(u,h) +{\rm d}^2 \delta_X(\bar x;-{\rm d}_xf(\bar x,\bar y))(u)-{\rm d}^2 \delta_Y(\bar y;{\rm d}_yf(\bar x,\bar y))(h) \geq 0.
\end{equation}
\end{itemize}
\end{theorem}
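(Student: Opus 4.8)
The plan is to split the two‑sided inequality defining a calm local minimax point into two one‑sided minimization problems — the inner problem $\max_{y\in Y}f(\bar x,\cdot)$ and the minimization of the localized value function $V_\epsilon$ of \eqref{Vepsilon} over $X$ — apply the constrained second‑order conditions of Propositions \ref{non-optimality2} and \ref{thm1} to each, and glue the two estimates together through an inner‑calmness property of the inner solution mapping. For part (a), \emph{Step 1} applies Proposition \ref{non-optimality2}(ii) to $\psi=-f(\bar x,\cdot)$ on $Y$ at $\bar y$: rewriting ${\rm d}(-f(\bar x,\cdot))(\bar y)$, ${\rm d}^2(-f(\bar x,\cdot))(\bar y)$ via \eqref{eqn2.2new}, \eqref{twice-epi}, \eqref{2nd-cal}, the first‑order hypothesis becomes exactly \eqref{firstordercond2} and the second‑order hypothesis \eqref{y_localmax-non} becomes exactly \eqref{sufflower}. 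Hence $\bar y$ is a local maximizer of $f(\bar x,\cdot)$ on $Y$ with quadratic growth: there are $\varepsilon>0,\eta_1>0$ with $f(\bar x,y)+\varepsilon\|y-\bar y\|^2\le f(\bar x,\bar y)$ on $Y\cap\mathbb{B}_{\eta_1}(\bar y)$, which is the left inequality; fix $\epsilon\in(0,\eta_1]$, so $V_\epsilon(\bar x)=f(\bar x,\bar y)$ and $\bar y$ is the unique maximizer of $f(\bar x,\cdot)$ on $Y\cap\mathbb{B}_\epsilon(\bar y)$. \emph{Step 2} shows the solution mapping $S_\epsilon(x):=\argmax_{y\in Y\cap\mathbb{B}_\epsilon(\bar y)}f(x,y)$ is inner calm at $(\bar x,\bar y)$: if not, take $x_k\to\bar x$ in $X$, $y_k\in S_\epsilon(x_k)$ with $\|y_k-\bar y\|/\|x_k-\bar x\|\to\infty$; the strict quadratic growth of Step 1 (with continuity of $f$ near $(\bar x,\bar y)$) forces $y_k\to\bar y$, and with $t_k:=\|y_k-\bar y\|$, $h_k:=(y_k-\bar y)/t_k\to h\in T_Y(\bar y)\setminus\{0\}$, $u_k:=(x_k-\bar x)/t_k\to0$, feeding the second‑order expansion of $f$ at $(\bar x,\bar y)$ into $0\le\big(f(x_k,y_k)-f(x_k,\bar y)\big)/(\tfrac12 t_k^2)$ and using the separation property \eqref{d=dx+dy} yields ${\rm d}_yf(\bar x,\bar y)(h)=0$ and $0\le {\rm d}^2_{yy}f(\bar x,\bar y)(h)+\limsup_k\tfrac{2{\rm d}_yf(\bar x,\bar y)(h_k)}{t_k}\le {\rm d}^2_{yy}f(\bar x,\bar y)(h)-{\rm d}^2\delta_Y(\bar y;{\rm d}_yf(\bar x,\bar y))(h)$, contradicting \eqref{sufflower}.

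\textbf{Part (a), conclusion.} \emph{Step 3:} I show ${\rm d}^2(V_\epsilon+\delta_X)(\bar x;0)(u)>0$ for every $u\ne0$ and conclude by Proposition \ref{thm1}(b). For $u\notin T_X(\bar x)$, and for $u\in T_X(\bar x)$ with ${\rm d}_xf(\bar x,\bar y)(u)>0$ (possible only in that regime by \eqref{firstordercond1}), the bound $V_\epsilon(\bar x+tu')\ge f(\bar x+tu',\bar y)$ and the first‑order expansion of $f$ make the defining liminf $+\infty$. For $u\in T_X(\bar x)\cap\{u'\mid{\rm d}_xf(\bar x,\bar y)(u')=0\}\setminus\{0\}$: along any $(t_k,u_k)\to(0,u)$ with $\bar x+t_ku_k\in X$, use the twice epi‑differentiability of $\delta_Y$ for ${\rm d}_yf(\bar x,\bar y)$ to pick $h_k\to h\in T_Y(\bar y)$ with $\bar y+t_kh_k\in Y$ and $\tfrac{2{\rm d}_yf(\bar x,\bar y)(h_k)}{t_k}\to-{\rm d}^2\delta_Y(\bar y;{\rm d}_yf(\bar x,\bar y))(h)$; then $V_\epsilon(\bar x+t_ku_k)\ge f(\bar x+t_ku_k,\bar y+t_kh_k)$, so the second‑order expansion of $f$ plus \eqref{d=dx+dy} (together with $\liminf_k\tfrac{2{\rm d}_xf(\bar x,\bar y)(u_k)}{t_k}\ge{\rm d}^2\delta_X(\bar x;-{\rm d}_xf(\bar x,\bar y))(u)$ from Definition \ref{Defn2.7}) give $\liminf_k\tfrac{V_\epsilon(\bar x+t_ku_k)-f(\bar x,\bar y)}{\frac12 t_k^2}\ge {\rm d}^2f(\bar x,\bar y)(u,h)-{\rm d}^2\delta_Y(\bar y;{\rm d}_yf(\bar x,\bar y))(h)+{\rm d}^2\delta_X(\bar x;-{\rm d}_xf(\bar x,\bar y))(u)$; taking the supremum over $h\in T_Y(\bar y)$ and invoking \eqref{cxlower3-non-new} (and finiteness of ${\rm d}^2\delta_X(\bar x;-{\rm d}_xf(\bar x,\bar y))(u)$) gives strict positivity. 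By Proposition \ref{thm1}(b) there is $\mu>0$ with $V_\epsilon(x)\ge f(\bar x,\bar y)+\mu\|x-\bar x\|^2$ for $x\in X$ near $\bar x$. Finally, for $\delta$ small and $x\in X\cap\mathbb{B}_\delta(\bar x)$, inner calmness places the maximizer of $f(x,\cdot)$ on $Y\cap\mathbb{B}_\epsilon(\bar y)$ inside $\mathbb{B}_{\kappa\|x-\bar x\|}(\bar y)\subseteq\mathbb{B}_{\kappa\delta}(\bar y)$, so $\max_{y'\in Y\cap\mathbb{B}_{\kappa\delta}(\bar y)}f(x,y')=V_\epsilon(x)\ge f(\bar x,\bar y)+\mu\|x-\bar x\|^2$; combining this with Step 1 and Proposition \ref{equiv-calm-local} shows $(\bar x,\bar y)$ is a calm local minimax point satisfying the stated second‑order growth condition.

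\textbf{Part (b).} The first‑order conditions \eqref{firstordercond1}--\eqref{firstordercond2} follow from Theorem \ref{1st-non}(c) (whose hypotheses hold here: $f$ twice semidifferentiable hence semidifferentiable, separation property, and $Y$ the whole space or $f(\bar x,\cdot)$ Lipschitz near $\bar y$). A calm local minimax point is a local minimax point, so by \eqref{localmm} $\bar y$ is a local maximizer of $f(\bar x,\cdot)$ on $Y$; applying Proposition \ref{non-optimality2}(i) to $-f(\bar x,\cdot)$ on $Y$ and translating as in Step 1 gives \eqref{necelower}. For \eqref{eqn4.11}, fix $u\in T_X(\bar x)\cap\{u'\mid{\rm d}_xf(\bar x,\bar y)(u')=0\}$ and choose $t_k\downarrow0$, $u_k\to u$, $\bar x+t_ku_k\in X$ with $\tfrac{2{\rm d}_xf(\bar x,\bar y)(u_k)}{t_k}\to{\rm d}^2\delta_X(\bar x;-{\rm d}_xf(\bar x,\bar y))(u)$ (finite by hypothesis); calm local minimaxity produces maximizers $y_k\in Y\cap\mathbb{B}_{\kappa t_k\|u_k\|}(\bar y)$ of $f(\bar x+t_ku_k,\cdot)$ with $f(\bar x+t_ku_k,y_k)\ge f(\bar x,\bar y)$, and $h_k:=(y_k-\bar y)/t_k$ is bounded, hence $h_k\to h\in T_Y(\bar y)$; expanding $0\le\big(f(\bar x+t_ku_k,\bar y+t_kh_k)-f(\bar x,\bar y)\big)/(\tfrac12 t_k^2)$ by twice semidifferentiability and \eqref{d=dx+dy}, with $\limsup_k\tfrac{2{\rm d}_yf(\bar x,\bar y)(h_k)}{t_k}\le-{\rm d}^2\delta_Y(\bar y;{\rm d}_yf(\bar x,\bar y))(h)$, yields exactly \eqref{eqn4.11}.

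\textbf{Main obstacle.} The delicate point is Step 2 of part (a): extracting the \emph{linear}-rate inner calmness of $S_\epsilon$ from the second‑order sufficient condition \eqref{sufflower} alone, since the naive Lipschitz estimate only gives a $\sqrt{\,\cdot\,}$‑rate. The resolution is to expand $f$ around $(\bar x,\bar y)$ by twice semidifferentiability so that the a priori unbounded first‑order ratio $2{\rm d}_yf(\bar x,\bar y)(h_k)/t_k$ is absorbed into $-{\rm d}^2\delta_Y(\bar y;{\rm d}_yf(\bar x,\bar y))(h)$, together with a bootstrap (using the strict growth of Step 1 and continuity of $f$ near $(\bar x,\bar y)$) to force $y_k\to\bar y$. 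A secondary subtlety, handled in Step 3, is that the second subderivative of $\delta_Y$ must be realized along the \emph{prescribed} scaling $(t_k,u_k)$ coming from $X$, which is precisely the role of the twice epi‑differentiability hypothesis on $\delta_Y$.
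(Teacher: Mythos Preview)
Part (b) of your proposal is essentially identical to the paper's argument.

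For part (a), your Steps 1 and 3 follow the paper's line closely (the paper proves your Step~3 lower bound as its ``Step 1''). The substantive difference is your Step~2, establishing inner calmness of $S_\epsilon$ at $(\bar x,\bar y)$, which the paper does \emph{not} do. There is a genuine gap here: to force $y_k\to\bar y$ you invoke ``continuity of $f$ near $(\bar x,\bar y)$'', but the only hypothesis is twice semidifferentiability of $f$ \emph{at} the single point $(\bar x,\bar y)$, which yields continuity at that point, not in a neighborhood. Without continuity at $(\bar x,y_\infty)$ for a limit point $y_\infty$ of $(y_k)$, you cannot pass to the limit in $f(x_k,y_k)\ge f(x_k,\bar y)$ to conclude $f(\bar x,y_\infty)\ge f(\bar x,\bar y)$, and the bootstrap breaks down. (Even the existence of maximizers $y_k\in S_\epsilon(x_k)$ is not guaranteed without an upper semicontinuity assumption on $f(x_k,\cdot)$.)

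The paper sidesteps this entirely: it takes the radius function $\tau(\delta)=\delta$ from the outset, which is trivially calm, and proves the growth inequality directly for $V_\delta$ via a contradiction argument. The key observation is that the lower bound you derive in Step~3 is valid for $V_\delta$ for \emph{every} $\delta>0$, because the sequence $\bar y+t_kh_k$ produced by twice epi-differentiability of $\delta_Y$ satisfies $t_kh_k\to0$ and hence lies in $\mathbb{B}_\delta(\bar y)$ for large $k$ regardless of $\delta$. With this observation no inner calmness of the solution map is needed, and the problematic continuity assumption disappears. You can repair your argument by dropping Step~2 and working, as the paper does, with $V_\delta$ directly rather than with $V_\epsilon$ for a fixed $\epsilon$.
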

\begin{proof}
Note that if ${\rm d}_y f\left(\bar x, \bar y\right)(h) < 0$, by  (\ref{infinity}), ${\rm d}^2 \delta_Y(\bar y;{\rm d}_yf(\bar x,\bar y))(h) =\infty$. Hence the conditions {\eqref{cxlower3-non-new} and \eqref{eqn4.11}} have excluded this possibility. Combining with (\ref{firstordercond2}) we have
${\rm d}_y f(\bar x,\bar y)(h)=0 $ for all $ h \in T_{ Y}(\bar y).$

(a) Since $f$ is twice semidifferentiable at $(\bar x,\bar y)$, by (\ref{eqn2.2new}) and (\ref{twice-epi}), we have
$$-{\rm d}_y f(\bar x,\bar y)(h)={\rm d}_y (-f)(\bar x,\bar y)(h), \quad -{\rm d}_{yy}^2 f(\bar x,\bar y)(h)={\rm d}_{yy}^2 (-f)(\bar x,\bar y)(h).$$ By Proposition \ref{non-optimality2} (ii), the second-order sufficient condition for the maximum problem  implies that $\bar y$ is a local maximizer of $f(\bar x,\cdot)$ on $ Y$ and the second-order growth condition holds. Thus, there exist $\delta_0>0, \varepsilon>0$ such that for any $\delta \in (0,\delta_0], y\in Y$ satisfying $\|y-\bar{y}\| \leq \delta$, we have  $f(\bar x,y) +\varepsilon\|y-\bar y\|^2\leq f(\bar x,\bar y)$. Let $\tau(\delta):=\delta$, then $\tau(\delta) \rightarrow 0$ as $\delta \downarrow 0$.
Since
$ V_\delta(x):=V_\delta(x;\bar y):=\max_{y  \in  Y\cap \mathbb{B}_\delta(\bar y)}f(x,y),$
we have
\begin{equation}\label{eqn4.13} V_\delta( x)\geq f(x,y) \ \ \forall (x,y)\in \mathbb{B}_\delta(\bar x,\bar y)\cap (X\times Y), \qquad \mbox{ and } V_\delta(\bar x)=f(\bar x,\bar y).\end{equation}
We break the rest proof for (a) into two steps.

{\bf Step 1:} We show that for any fixed $u \in T_X(\bar x) \cap \{u' | {\rm d}_x f(\bar x,\bar y)(u') = 0 \}$ and $\delta \in (0,\delta_0]$,
\begin{equation}\label{lowerbd2}
	\begin{aligned}
		&{\rm d}^2 (V_{\delta}+\delta_X)(\bar x;0)(u) \\
		\geq& \sup_{{h \in T_{ Y}(\bar{y})}
		} \left\{ {\rm d}^2 f(\bar x,\bar y)(u,h) +{\rm d}^2 \delta_X(\bar x;-{\rm d}_xf(\bar x,\bar y)(u)-{\rm d}^2 \delta_Y(\bar y;{\rm d}_yf(\bar x,\bar y))(h)\right\}.
	\end{aligned}
\end{equation}
Since $u \in T_X(\bar x) \cap \{u' | {\rm d}_x f(\bar x,\bar y)(u') = 0\}$, by definition of the second subderivative in Definition \ref{Defn2.7}, there exist $t_k\downarrow 0$, $u_k\to u$ such that  $\bar x+t_ku_k \in X$ and
\begin{equation}\label{main-1}
{\rm d}^2 (V_{\delta}+\delta_X)(\bar x;0)(u)
= \lim\limits_{k \to \infty} \frac{V_{\delta}(\bar{x}+t_ku_k)-V_{\delta}(\bar{x})}{\frac{1}{2}t_k^2}.
\end{equation}
Since $\delta_{Y}$ is twice epi-differentiable at $\bar y$ for ${\rm d}_yf(\bar x,\bar y)$, by Definition \ref{Defn2.7} for any $h \in T_{ Y}(\bar{y})$, we can find a sequence $h_{k} \rightarrow h$ such that
$$-{\rm d}^2 \delta_Y(\bar y;{\rm d}_yf(\bar x,\bar y))(h)=-\lim\limits_{k \to \infty}\frac{\delta_Y(\bar y+t_kh_k)-t_k{\rm d}_yf(\bar{x}, \bar{y})(h_k)}{\frac{1}{2}t_k^2}.$$
If $y_k:=\bar{y}+t_kh _{k} \not\in  Y$ for each $k \in \mathbb{N}$, $-{\rm d}^2 \delta_Y(\bar y;{\rm d}_yf(\bar x,\bar y))(h) = - \infty$ but this is impossible by (\ref{cxlower3-non-new}). Hence we may assume that  $y_k:=\bar{y}+t_kh _{k} \in  Y$ for each $k \in \mathbb{N}$. Then
\begin{equation}\label{eqn4.16}-{\rm d}^2 \delta_Y(\bar y;{\rm d}_yf(\bar x,\bar y))(h)=\lim\limits_{k \to \infty}\frac{2{\rm d}_yf(\bar{x}, \bar{y})(h_k)}{t_k}.\end{equation}
Hence we have
\begin{eqnarray*}\label{}
\begin{aligned}
& \quad\ {\rm d}^2 (V_{\delta}+\delta_X)(\bar x;0)(u)  = \liminf\limits_{k \to \infty} \frac{V_{\delta}\left(\bar{x}+t_ku_k\right)-V_{\delta}(\bar{x})}{\frac{1}{2} t_k^2}  \quad \mbox{ by } \eqref{main-1}\\
&\geq \liminf\limits_{k \to \infty} \frac{f\left(\bar{x}+t_k u_k, \bar{y}+t_k h_k\right)-f(\bar{x}, \bar{y})}{\frac{1}{2} t_k^2}\quad \mbox{ by } \eqref{eqn4.13}\\
&= \liminf\limits_{k \to \infty} \left\{\frac{f\left(\bar{x}+t_k u_k, \bar{y}+t_k h_k\right)-f(\bar{x}, \bar{y})-t_k{\rm d}f(\bar{x}, \bar{y})(u_k,h_k)}{\frac{1}{2} t_k^2}+\frac{2{\rm d}_x f(\bar{x}, \bar{y})(u_k)}{t_k}+\frac{2{\rm d}_y f(\bar{x}, \bar{y})(h_k)}{t_k}\right\} \\
& \geq {\rm d}^2 f(\bar x,\bar y)(u,h) +{\rm d}^2 \delta_X(\bar x;-{\rm d}_xf(\bar x,\bar y))(u)-{\rm d}^2 \delta_Y(\bar y;{\rm d}_yf(\bar x,\bar y))(h),
\end{aligned}
\end{eqnarray*}
where the last inequality follows due to (\ref{eqn4.16}), that ${\rm d}f(\bar{x}, \bar{y})(u,h)={\rm d}_xf(\bar{x}, \bar{y})(u)+{\rm d}_yf(\bar{x}, \bar{y})(h)$ for any $(u,h) \in \mathbb{R}^n \times \mathbb{R}^m$, the assumption that for $ u\in T_{X}(\bar x) \cap \{u' | {\rm d}_x f(\bar x,\bar y)(u') = 0\}$, ${\rm d}^2 \delta_X(\bar x;-{\rm d}_xf(\bar x,\bar y))(u)$ is finite.
Thus (\ref{lowerbd2}) holds.

{\bf Step 2:} We show that for any $\delta \in (0,\delta_0]$ and $x \in X$ satisfying $\|x-\bar x\| \leq \delta$, we have
\begin{equation}\label{second-order-growth} \max _{y^{\prime}  \in  Y\cap \mathbb{B}_\delta(\bar y)} f\left(x, y^{\prime}\right) - f(\bar x,\bar y) \geq \beta \|x-\bar x\|^2\end{equation}
for some $\beta > 0$.

To the contrary, suppose that for some $\delta \in (0,\delta_0]$ and $x_k \in X$  with $\|x_k-\bar x\| \leq \delta$,
\begin{equation}\label{contradiction2}
\max _{y^{\prime}  \in  Y\cap \mathbb{B}_\delta(\bar y)} f\left(x_k, y^{\prime}\right)-f(\bar x,\bar y) \leq o(t_k^2),
\end{equation}
where $t_k:= \|x_k-\bar x\|$. Let $u_k:=(x_k-\bar x)/\|x_k-\bar x\|$, we have $t_k\downarrow 0$ and $\|u_k\|=1$. By passing to a subsequence if necessary, we may assume that $u_k \to u$ with $\|u\|=1$. We have $u \in T_{X}(\bar x) \setminus \{0\}$.
The assumed first-order condition gives us ${\rm d}_x f(\bar x,\bar y)(u) \geq 0$.
If ${\rm d}_x f(\bar x,\bar y)(u)>0$,
$$\max _{y^{\prime}  \in  Y\cap \mathbb{B}_\delta(\bar y)} f\left(x_k, y^{\prime}\right)-f(\bar x,\bar y) \geq f(x_k,\bar y) - f(\bar x,\bar y) \geq t_k{\rm d}_x f(\bar x,\bar y)(u)+o(t_k) > o(t_k) \geq o(t_k^2),$$
which is a contradiction to \eqref{contradiction2}.
If ${\rm d}_x f(\bar x,\bar y)(u) = 0$, by \eqref{lowerbd2} and (\ref{main-1}), we have
\begin{equation*}
\begin{aligned}
\max _{y^{\prime}  \in  Y\cap \mathbb{B}_\delta(\bar y)} f\left(x_k, y^{\prime}\right)-f(\bar x,\bar y) & = V_{\delta}(x_k) - V_{\delta}(\bar x)  \geq \frac{1}{2}t_k^2 \theta(\bar x,\bar y, u,h)+o(t_k^2),
\end{aligned}
\end{equation*}
where $$\theta(\bar x,\bar y, u,h):= \sup_{{h \in T_{ Y}(\bar{y})}
 } \left\{ {\rm d}^2 f(\bar x,\bar y)(u,h) +{\rm d}^2 \delta_X(\bar x;-{\rm d}_xf(\bar x,\bar y))(u)-{\rm d}^2 \delta_Y(\bar y;{\rm d}_yf(\bar x,\bar y)(h)\right\}.$$
It follows from \eqref{cxlower3-non-new} that $\theta(\bar x,\bar y, u,h)>0$. Hence we have a contradiction to \eqref{contradiction2} and consequently (\ref{second-order-growth}) holds.

Combining with the fact that $\bar y$ is a  maximizer of $f(\bar x,\cdot)$ on $ Y\cap \mathbb{B}_\delta(\bar y)$, it follows that
$$f(\bar x, y) \leq f(\bar x,\bar y)\leq \max _{y^{\prime}  \in  Y\cap \mathbb{B}_\delta(\bar y)} f\left(x, y^{\prime}\right) \quad \forall (x,y) \in \mathbb{B}_\delta(\bar x,\bar y) \cap (X\times Y).$$
Thus, $(\bar x, \bar y)$ is a calm local minimax point to problem \eqref{minimax}.

(b) First, by Theorem \ref{1st-non} (c) and Proposition \ref{non-optimality2} (i), we have the first- and second-order conditions for the maximization problem.

Second, since $(\bar x,\bar y)$ is a calm local minimax point to problem \eqref{minimax}, by Lemma
\ref{locamm-equi}, there exist a $\delta_0 >0$ and a function $\tau: \mathbb{R}_{+} \rightarrow \mathbb{R}_{+}$ which is calm at $0$ satisfying $\tau(\delta) \rightarrow 0$ as $\delta \downarrow 0$, such that for any $\delta \in\left(0, \delta_0\right]$ and any $x \in X\cap \mathbb{B}_\delta (\bar x)$, we have
\begin{equation}
 f(\bar{x}, \bar{y}) \leq \max _{y^{\prime}  \in  Y\cap \mathbb{B}_{\tau(\|x-\bar{x}\|)}(\bar y)} f\left(x, y^{\prime}\right) .\label{eqn4.21}
\end{equation}
For any $u\in T_{X}(\bar x) \cap \{u' | {\rm d}_x f(\bar x,\bar y)(u') = 0\} $, there exist $t_k\downarrow 0$, $u_k\to u$ such that $x_k:=\bar x+t_ku_k \in X$ and
$${\rm d}^2 \delta_X(\bar x;-{\rm d}_xf(\bar x,\bar y))(u) = \lim\limits_{k\to \infty} \frac{2 {\rm d}_xf(\bar{x}, \bar{y})(u_k)}{t_k}.$$
Since $\tau $ is calm,  there exist $\kappa>0$ and
\begin{equation*}
y_k \in \argmax _{y^{\prime}  \in  Y\cap \mathbb{B}_{\tau(\|x_k-\bar x\|)}(\bar y)} f\left(x_k, y^{\prime}\right),
\end{equation*}
 such that $\|y_k-\bar y\| \leq \kappa \|x_k-\bar x\|$. Thus, by passing to a subsequence if necessary (without relabeling), there exists $h\in \mathbb{R}^m$ such that $h_k:=(y_k-\bar y)/t_k \to h$. By the definition of the contingent cone, we have $h\in T_{ Y}(\bar y)$.

Thus,
\begin{eqnarray*}\label{}
\begin{aligned}
0 & \leq \limsup\limits_{k \to \infty} \frac{f\left(\bar{x}+t_k u_k, \bar{y}+t_kh_k\right)-f(\bar{x}, \bar{y})}{\frac{1}{2}t_k^2}\quad \mbox{ by } \eqref{eqn4.21} \\
& =\limsup\limits_{k \to \infty} \frac{f\left(\bar{x}+t_k u_k, \bar{y}+t_kh_k\right)-f(\bar{x}, \bar{y})-t_k{\rm d}f(\bar{x}, \bar{y})(u_k,h_k)+t_k{\rm d}_xf(\bar{x}, \bar{y})(u_k)+t_k{\rm d}_yf(\bar{x}, \bar{y})(h_k)}{\frac{1}{2}t_k^2} \\
& = {\rm d}^2 f(\bar x,\bar y)(u,h) +{\rm d}^2 \delta_X(\bar x;-{\rm d}_xf(\bar x,\bar y))(u) + \limsup\limits_{k \to \infty} \frac{t_k{\rm d}_yf(\bar{x}, \bar{y})(h_k)}{\frac{1}{2}t_k^2} \\
& \leq {\rm d}^2 f(\bar x,\bar y)(u,h) +{\rm d}^2 \delta_X(\bar x;-{\rm d}_xf(\bar x,\bar y))(u)+ \limsup\limits_{{t \downarrow 0,h' \to h} \atop {\bar y+th'\in Y}} \frac{t{\rm d}_yf(\bar{x}, \bar{y})(h')}{\frac{1}{2}t^2} \\
& = {\rm d}^2 f(\bar x,\bar y)(u,h) +{\rm d}^2 \delta_X(\bar x;-{\rm d}_xf(\bar x,\bar y))(u)- \liminf\limits_{{t \downarrow 0,h' \to h} \atop {\bar y+th'\in Y}} \frac{-t{\rm d}_yf(\bar{x}, \bar{y})(h')}{\frac{1}{2}t^2} \\
& = {\rm d}^2 f(\bar x,\bar y)(u,h) +{\rm d}^2 \delta_X(\bar x;-{\rm d}_xf(\bar x,\bar y))(u)-{\rm d}^2 \delta_Y(\bar y;{\rm d}_yf(\bar x,\bar y))(h),
\end{aligned}
\end{eqnarray*} where the second equality follows from the assumption for the separation property.
\end{proof}

When the separation property holds for the second subderivative, our second-order necessary optimality conditions are reduced to the following { simpler but relaxed forms.}
\begin{corollary}\label{cor4.1}
Let $(\bar x,\bar y) \in X \times  Y$ be a calm local minimax point to problem \eqref{minimax}. Suppose that $f$ is twice semidifferentiable at $(\bar x, \bar y)$, the separation property holds for the second subderivative of $f$ at $(\bar x,\bar y)$, and the value ${\rm d}^2 \delta_X(\bar x;-{\rm d}_xf(\bar x,\bar y))(u)$ is finite for any $u\in T_{X}(\bar x) \cap \{u' | {\rm d}_x f(\bar x,\bar y)(u')  =0 \}$. Then, for any $u \in T_X(\bar x) \cap \{u' | {\rm d}_x f(\bar x,\bar y)(u') = 0\} $, there exists $h\in T_ Y({\bar y})$
such that
 \begin{equation}\label{eqn4.23}
{\rm d}^2_{xx} f(\bar x,\bar y)(u) +2{\rm d}^2_{xy} f(\bar x,\bar y)(u,h)  +{\rm d}^2 \delta_X(\bar x;-{\rm d}_xf(\bar x,\bar y))(u) \geq 0,
\end{equation}
and for any $h\in T_{ Y}(\bar y) \cap \{h'|{\rm d}_y f(\bar x,\bar y)^T(h') = 0\}$, we have
$${\rm d}^2_{yy} f(\bar x,\bar y)(h) -{\rm d}^2 \delta_Y(\bar y;{\rm d}_yf(\bar x,\bar y))(h) \leq 0.$$
\end{corollary}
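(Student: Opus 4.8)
The plan is to deduce Corollary~\ref{cor4.1} directly from Theorem~\ref{main}(b). First I would verify that the standing hypotheses of Theorem~\ref{main} and of its part~(b) are met: $f$ is twice semidifferentiable at $(\bar x,\bar y)$; by definition the separation property for the \emph{second} subderivative of $f$ at $(\bar x,\bar y)$ includes the separation property for the subderivative; the value ${\rm d}^2\delta_X(\bar x;-{\rm d}_xf(\bar x,\bar y))(u)$ is finite on the critical cone $T_X(\bar x)\cap\{u'\mid {\rm d}_xf(\bar x,\bar y)(u')=0\}$; and $(\bar x,\bar y)$ is a calm local minimax point. The mild side requirement in Theorem~\ref{main}(b) ($Y$ the whole space or $f(\bar x,\cdot)$ Lipschitz around $\bar y$) is only used to invoke the first-order and second-order necessary conditions for the inner maximization problem; since twice semidifferentiability of $f$ at $(\bar x,\bar y)$ makes $f(\bar x,\cdot)$ semidifferentiable at $\bar y$, Proposition~\ref{non-optimality}(i) and Proposition~\ref{non-optimality2}(i) apply exactly as in that proof, so this point is only a bookkeeping matter. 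Theorem~\ref{main}(b) then delivers three facts: the first-order necessary conditions \eqref{firstordercond1}--\eqref{firstordercond2}; the second-order necessary condition \eqref{necelower} for $\max_{y\in Y}f(\bar x,y)$ at $\bar y$; and, for every $u$ in the critical cone, the existence of some $h\in T_Y(\bar y)$ with $ {\rm d}^2 f(\bar x,\bar y)(u,h)+{\rm d}^2\delta_X(\bar x;-{\rm d}_xf(\bar x,\bar y))(u)-{\rm d}^2\delta_Y(\bar y;{\rm d}_yf(\bar x,\bar y))(h)\ge 0$, i.e.\ \eqref{eqn4.11}.

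The second assertion of the corollary is literally \eqref{necelower}, so nothing further is needed there. For the first assertion I would fix $u\in T_X(\bar x)\cap\{u'\mid {\rm d}_xf(\bar x,\bar y)(u')=0\}$, take the corresponding $h$ supplied by \eqref{eqn4.11}, and first argue that ${\rm d}_yf(\bar x,\bar y)(h)=0$. Indeed, if $h\notin T_Y(\bar y)$ or ${\rm d}_yf(\bar x,\bar y)(h)<0$, then \eqref{infinity} would give ${\rm d}^2\delta_Y(\bar y;{\rm d}_yf(\bar x,\bar y))(h)=\infty$, making the left-hand side of \eqref{eqn4.11} equal to $-\infty$ (all the other terms being finite: ${\rm d}^2 f(\bar x,\bar y)(u,h)$ by twice semidifferentiability, ${\rm d}^2\delta_X(\bar x;-{\rm d}_xf(\bar x,\bar y))(u)$ by hypothesis), contradicting $\ge 0$; combined with the first-order condition \eqref{firstordercond2} this forces ${\rm d}_yf(\bar x,\bar y)(h)=0$. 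Hence $h\in T_Y(\bar y)\cap\{h'\mid {\rm d}_yf(\bar x,\bar y)(h')=0\}$, so \eqref{necelower} applies to this $h$.

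Next I would use the separation property for the second subderivative, \eqref{2ndseparation}, to expand ${\rm d}^2 f(\bar x,\bar y)(u,h)=2{\rm d}^2_{xy}f(\bar x,\bar y)(u,h)+{\rm d}^2_{xx}f(\bar x,\bar y)(u)+{\rm d}^2_{yy}f(\bar x,\bar y)(h)$, which rewrites \eqref{eqn4.11} as
\[
2{\rm d}^2_{xy}f(\bar x,\bar y)(u,h)+{\rm d}^2_{xx}f(\bar x,\bar y)(u)+{\rm d}^2\delta_X(\bar x;-{\rm d}_xf(\bar x,\bar y))(u)+\big({\rm d}^2_{yy}f(\bar x,\bar y)(h)-{\rm d}^2\delta_Y(\bar y;{\rm d}_yf(\bar x,\bar y))(h)\big)\ge 0 .
\]
By \eqref{necelower} the bracketed quantity is $\le 0$, so discarding it only increases the left-hand side; this yields \eqref{eqn4.23}. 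Here every quantity is finite and no $\infty-\infty$ arises: ${\rm d}^2_{xx}f,{\rm d}^2_{yy}f,{\rm d}^2_{xy}f$ are finite by twice semidifferentiability and \eqref{2ndseparation}, ${\rm d}^2\delta_X(\bar x;-{\rm d}_xf(\bar x,\bar y))(u)$ is finite by hypothesis, and ${\rm d}^2\delta_Y(\bar y;{\rm d}_yf(\bar x,\bar y))(h)$ is sandwiched between ${\rm d}^2_{yy}f(\bar x,\bar y)(h)$ and ${\rm d}^2 f(\bar x,\bar y)(u,h)+{\rm d}^2\delta_X(\bar x;-{\rm d}_xf(\bar x,\bar y))(u)$ by \eqref{necelower} and \eqref{eqn4.11}.

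There is no serious obstacle: the corollary merely repackages the coupled condition \eqref{eqn4.11} into the ``separated but relaxed'' form \eqref{eqn4.23}. The only delicate points are (i)~checking that the multiplier direction $h$ produced by Theorem~\ref{main}(b) genuinely lies in the critical cone of the inner maximization problem so that \eqref{necelower} may be invoked on it, and (ii)~the finiteness bookkeeping above guaranteeing that dropping the $y$-terms is a valid inequality.
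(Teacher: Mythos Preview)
Your proposal is correct and follows essentially the same route as the paper's own proof: invoke Theorem~\ref{main}(b) to obtain \eqref{necelower} and \eqref{eqn4.11}, observe that the direction $h$ furnished by \eqref{eqn4.11} must satisfy ${\rm d}_yf(\bar x,\bar y)(h)=0$ (via \eqref{infinity} and \eqref{firstordercond2}), expand ${\rm d}^2 f(\bar x,\bar y)(u,h)$ using the separation property \eqref{2ndseparation}, and then drop the nonpositive $y$-block ${\rm d}^2_{yy}f(\bar x,\bar y)(h)-{\rm d}^2\delta_Y(\bar y;{\rm d}_yf(\bar x,\bar y))(h)$ by \eqref{necelower} to arrive at \eqref{eqn4.23}. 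Your version is simply more explicit about the finiteness bookkeeping and about handling the side hypothesis on $f(\bar x,\cdot)$, which the paper glosses over.
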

\begin{proof}
By the analysis at the beginning of the proof of Theorem \ref{main}, we have
${\rm d}_y f(\bar x,\bar y)(h)=0 $ for all $ h \in T_{ Y}(\bar y).$ Then, \eqref{necelower} together with \eqref{eqn4.11} and \eqref{2ndseparation} give us \eqref{eqn4.23}.
\end{proof}
By Proposition \ref{non-optimality2},   the condition ${\rm d}^2_{xx} f(\bar x,\bar y)(u) +{\rm d}^2 \delta_X(\bar x;-{\rm d}_xf(\bar x,\bar y))(u) \geq 0 \ \forall u\in T_{X}(\bar x) \cap \{u' | \nabla_x f(\bar x,\bar y)^Tu'  =0 \}$ is  necessary for  $\bar x$ to be a local minimizer  for the  problem $\min_{x \in X} f(x,\bar y)$. But this corresponds to the concept of local Nash equilibrium. Since a local Nash equilibrium must be a calm local minimax point but not vice versa,  the term ${\rm d}^2_{xy} f(\bar x,\bar y)(u,h)$ can not in general be dismissed in Corollary \ref{cor4.1}.

Jiang and Chen gave necessary optimality conditions for local minimax points in \cite[Theorems 3.11 and 3.17]{JC22}, our approach differs in the following ways:
\begin{itemize}
\item[(i)] We do not require the convexity of sets $X$ and $Y$. Additionally, by utilizing second subderivatives of indicator functions, we characterize optimality in all critical directions, namely $T_{X}(\bar x) \cap \{u' | {\rm d}_x f(\bar x,\bar y)(u') =0 \}$ and $T_{ Y}(\bar y) \cap \{h'|{\rm d}_y f(\bar x,\bar y)(h') = 0\}$. However, \cite[Theorems 3.11 and 3.17]{JC22} only focus on a subset of critical directions.
\item[(ii)] Our optimality conditions effectively capture the nested structure of the minimax problem. Specifically, we incorporate second-order information on $y$ in \eqref{eqn4.23} (or \eqref{eqn4.11}) by leveraging the property of calm local minimax points. We will provide an example in the last section to demonstrate that even when calm local minimax points are identical to local minimax points, our optimality conditions can be sharper than those in \cite{JC22}.
\end{itemize}

\section{Special cases and comparisons with existing related works}\label{section-compare}

In this section, we derive optimality conditions for some special cases when the functions have more  properties and the  constraint sets have some specific structures. To derive the corresponding optimality conditions for other cases, one can use the calculation rules presented in Proposition \ref{Calculation} in conjunction with the method we use in this section.  We also compare our results with existing works and demonstrate that our optimality conditions for local optimality can be more appropriate for some minimax problems.

{\subsection{Set-constrained systems}
In this section we consider the minimax problem
\begin{equation}\label{set-constrained}
  \min_{x\in X}\max_{y\in  Y} f(x,y),
  \end{equation}where  $f$ is twice continuously differentiable and the  constraints are defined by
\begin{eqnarray*}
X:= \{ x\in \mathbb{R}^n | \phi(x)\in C\},\quad
Y:= \{ y\in \mathbb{R}^m | \varphi (y) \in D\},
\end{eqnarray*}
where  $\phi:\mathbb{R}^n \rightarrow \mathbb{R}^{p}$ and $\varphi:\mathbb{R}^m  \to \mathbb{R}^q$ are twice continuously differentiable, $C\subseteq \mathbb{R}^{p}$ and $D\subseteq \mathbb{R}^{q}$ are convex.

Define the critical cones for the minimization and the maximization problem at $(\bar x, \bar y)$ respectively:
$$C_{\min}(\bar x,\bar y):= \{u\in \mathbb{R}^n| \nabla_x f\left(\bar x, \bar y\right)u=   0, \nabla \phi(\bar x)u \in T_{C}(\phi(\bar x))\},$$
$$C_{\max}(\bar x,\bar y):= \{h\in \mathbb{R}^m| \nabla_y f\left(\bar x, \bar y\right)h=  0, \nabla \varphi(\bar y)h \in T_{D}(\varphi(\bar y))\}.$$

Define the set of multipliers corresponding to the minimization and the maximization problem at $(\bar x, \bar y)$ respectively:
\begin{eqnarray*}
\Lambda_{\min}(\bar x,\bar y)&:=& \left \{ \alpha \in N_C(\phi(\bar x))\mid \nabla_x f(\bar x,\bar y)+\nabla \phi(\bar x)^T\alpha=0 \right \},\\
\Lambda_{\max}(\bar x,\bar y)&:=& \left \{ \beta \in N_D(\varphi(\bar y))\mid -\nabla_y f(\bar x,\bar y)+ \nabla \varphi(\bar y)^T\beta=0\right \}.
\end{eqnarray*}

Denote the Lagrangian functions to the minimization and the maximization problem by
\begin{eqnarray*}
L_{\min}(x,y,\alpha,\beta) := f(x,y) + \phi(x)^T\alpha - \varphi(y)^T\beta, \quad L_{\max}(y,\beta;x) := f(x,y) - \varphi(y)^T\beta .
\end{eqnarray*}

Now, we can give  second-order optimality conditions for the minimax problem  (\ref{set-constrained}).

  \begin{theorem}\label{second-dual}Let $(\bar x,\bar y) \in X \times  Y$.  Suppose that the MSCQ holds  for the system $\phi(x)\in C$ at $\bar x$ and $\varphi(y)\in D$ at $\bar y$, respectively. Suppose $C$ is  parabolically derivable at $\phi(\bar x)$ for all vectors $\nabla \phi(\bar x) u$ where $u\in C_{\min}(\bar x,\bar y)$,  parabolically regular at $\phi(\bar x)$ for every $
  \alpha \in \Lambda_{\min}(\bar x,\bar y)$; $D$ is  parabolically derivable at $\varphi(\bar y)$ for all vectors $\nabla \varphi(\bar y) h$ where $h\in C_{\max}(\bar x,\bar y)$,  parabolically regular at $\varphi(\bar y)$ for every $
  \beta \in \Lambda_{\max}(\bar x,\bar y)$ {(e.g., when $C$ and $D$ are convex polyhedral sets, or second-order cones, or cones of positive semidefinite symmetric matrices)}.
  \begin{itemize}
 \item[(a)] Suppose
 the second-order sufficient optimality condition for the maximization holds, i.e.,
For any $h\in C_{\max}(\bar x,\bar y) \setminus \{0\}$, there exists a multiplier $\beta \in \Lambda_{\max}(\bar x,\bar y) $ such that
\begin{equation*}
h^T \nabla^2_{yy}L_{\max}(\bar y,\beta; \bar x) h + \sigma_{T_{D}^2\left(\varphi(\bar y);\nabla \varphi(\bar y)h\right)}(\beta)< 0.
\end{equation*}
If for any $u\in C_{\min}(\bar x,\bar y) \setminus \{0\}$, there exist $h\in C_{\max}(\bar x,\bar y)$  and a multiplier $\alpha \in \Lambda_{\min}(\bar x,\bar y)$ such that for any $\beta \in \Lambda_{\max}(\bar x,\bar y) $,
\begin{equation*}
\nabla^2_{(x,y)} L_{\min}(\bar x,\bar y,\alpha,\beta)((u,h),(u,h)) - \sigma_{T_{C}^2\left(\phi(\bar x);\nabla \phi(\bar x)u\right)}(\alpha)+ \sigma_{T_{D}^2\left(\varphi(\bar y);\nabla \varphi(\bar y)h\right)}(\beta)> 0,
\end{equation*}
then $(\bar x,\bar y)$ is a calm local minimax point to problem \eqref{set-constrained} {with the second-order growth condition.
}

  \item[(b)] Suppose that  $(\bar x,\bar y)$ is a calm local minimax point to problem \eqref{set-constrained}. Then the following second-order necessary optimality conditions for the maximization hold:
 for any $h \in C_{\max}(\bar x,\bar y)$, there exists a multiplier $\beta \in \Lambda_{\max}(\bar x,\bar y)$ such that
\begin{equation*}
h^T \nabla^2_{yy}L_{\max}(\bar y,\beta; \bar x) h + \sigma_{T_{D}^2\left(\varphi(\bar y);\nabla \varphi(\bar y)h\right)}(\nabla_y f(\bar x,\bar y)) \leq 0,
\end{equation*}
and for any $u \in C_{\min}(\bar x,\bar y)$, there exist $h \in C_{\max}(\bar x,\bar y)$ and a multiplier $\alpha \in \Lambda_{\min}(\bar x,\bar y)$ such that for any $\beta \in \Lambda_{\max}(\bar x,\bar y) $,
\begin{equation*}
\nabla^2_{(x,y)} L_{\min}(\bar x,\bar y,\alpha,\beta)((u,h),(u,h)) - \sigma_{T_{C}^2\left(\phi(\bar x);\nabla \phi(\bar x)u\right)}(\alpha)+ \sigma_{T_{D}^2\left(\varphi(\bar y);\nabla \varphi(\bar y)h\right)}(\beta) \geq 0.
\end{equation*}
\end{itemize}
   \end{theorem}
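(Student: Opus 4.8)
The plan is to obtain Theorem~\ref{second-dual} as the specialization of the general second-order result Theorem~\ref{main} to the constraint structure $X=\{x\mid\phi(x)\in C\}$, $Y=\{y\mid\varphi(y)\in D\}$, feeding in the $C^2$ smoothness of $f$ and the second-order chain rules of Propositions~\ref{system}, \ref{Tm3.6ABM} and \ref{Calculation}. First I would record that a $C^2$ function $f$ is twice semidifferentiable at $(\bar x,\bar y)$ with ${\rm d}_xf(\bar x,\bar y)(u)=\nabla_xf(\bar x,\bar y)^Tu$, ${\rm d}_yf(\bar x,\bar y)(h)=\nabla_yf(\bar x,\bar y)^Th$, ${\rm d}^2f(\bar x,\bar y)(u,h)=(u,h)^T\nabla^2f(\bar x,\bar y)(u,h)$ and ${\rm d}^2_{yy}f(\bar x,\bar y)(h)=h^T\nabla^2_{yy}f(\bar x,\bar y)h$, and that the separation property for the subderivative and the second subderivative of $f$ holds (the cross term being $2u^T\nabla^2_{xy}f(\bar x,\bar y)h$). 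Thus all the standing hypotheses of Theorem~\ref{main} on $f$ are in force.

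Next I would apply Proposition~\ref{system} to $X$ at $\bar x$ (with $g=\phi$, $\Sigma=C$) and to $Y$ at $\bar y$ (with $g=\varphi$, $\Sigma=D$); the MSCQ together with the assumed parabolic derivability and regularity of $C$ and $D$ (which hold in particular for polyhedral sets, second-order cones and cones of PSD matrices, by the list following Proposition~\ref{Tm3.6ABM}) give $N^p_X(\bar x)=\widehat N_X(\bar x)=N_X(\bar x)$ and the analogue for $Y$, the linearized tangent cones $T_X(\bar x)=\{u\mid\nabla\phi(\bar x)u\in T_C(\phi(\bar x))\}$ and $T_Y(\bar y)=\{h\mid\nabla\varphi(\bar y)h\in T_D(\varphi(\bar y))\}$, and hence the identifications $T_X(\bar x)\cap\{u\mid{\rm d}_xf(\bar x,\bar y)(u)=0\}=C_{\min}(\bar x,\bar y)$ and $T_Y(\bar y)\cap\{h\mid{\rm d}_yf(\bar x,\bar y)(h)=0\}=C_{\max}(\bar x,\bar y)$. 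Once the first-order conditions \eqref{firstordercond1}--\eqref{firstordercond2} are in hand (see below), they place $-\nabla_xf(\bar x,\bar y)\in N_X(\bar x)$ and $\nabla_yf(\bar x,\bar y)\in N_Y(\bar y)$, so Proposition~\ref{Tm3.6ABM} makes $\delta_X$ (resp.\ $\delta_Y$) properly twice epi-differentiable at $\bar x$ for $-\nabla_xf(\bar x,\bar y)$ (resp.\ at $\bar y$ for $\nabla_yf(\bar x,\bar y)$), and \eqref{2ndsub-chain} combined with Proposition~\ref{Tm3.6ABM} yields, for $u\in C_{\min}(\bar x,\bar y)$ and $h\in C_{\max}(\bar x,\bar y)$,
\[
\begin{aligned}
{\rm d}^2\delta_X(\bar x;-\nabla_xf(\bar x,\bar y))(u)&=\max_{\alpha\in\Lambda_{\min}(\bar x,\bar y)}\big\{\langle\alpha,\nabla^2\phi(\bar x)(u,u)\rangle-\sigma_{T^2_C(\phi(\bar x);\nabla\phi(\bar x)u)}(\alpha)\big\},\\
{\rm d}^2\delta_Y(\bar y;\nabla_yf(\bar x,\bar y))(h)&=\max_{\beta\in\Lambda_{\max}(\bar x,\bar y)}\big\{\langle\beta,\nabla^2\varphi(\bar y)(h,h)\rangle-\sigma_{T^2_D(\varphi(\bar y);\nabla\varphi(\bar y)h)}(\beta)\big\},
\end{aligned}
\]
both finite, where I have used that $\Lambda(\bar x,-\nabla_xf(\bar x,\bar y))=\Lambda_{\min}(\bar x,\bar y)$ and $\Lambda(\bar y,\nabla_yf(\bar x,\bar y))=\Lambda_{\max}(\bar x,\bar y)$. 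In particular the finiteness hypothesis on ${\rm d}^2\delta_X(\bar x;-{\rm d}_xf(\bar x,\bar y))$ required in Theorem~\ref{main} is automatic.

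Finally I would substitute these formulas into the four conditions of Theorem~\ref{main}. Using $\nabla^2_{yy}L_{\max}(\bar y,\beta;\bar x)=\nabla^2_{yy}f(\bar x,\bar y)-\sum_j\beta_j\nabla^2\varphi_j(\bar y)$ one gets ${\rm d}^2_{yy}f(\bar x,\bar y)(h)-{\rm d}^2\delta_Y(\bar y;\nabla_yf(\bar x,\bar y))(h)=\min_{\beta\in\Lambda_{\max}(\bar x,\bar y)}\{h^T\nabla^2_{yy}L_{\max}(\bar y,\beta;\bar x)h+\sigma_{T^2_D(\varphi(\bar y);\nabla\varphi(\bar y)h)}(\beta)\}$, so \eqref{sufflower} (resp.\ \eqref{necelower}) becomes exactly the stated second-order sufficient (resp.\ necessary) condition for the maximization. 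Similarly, using $\nabla^2_{(x,y)}L_{\min}(\bar x,\bar y,\alpha,\beta)((u,h),(u,h))=(u,h)^T\nabla^2f(\bar x,\bar y)(u,h)+\langle\alpha,\nabla^2\phi(\bar x)(u,u)\rangle-\langle\beta,\nabla^2\varphi(\bar y)(h,h)\rangle$, the quantity ${\rm d}^2f(\bar x,\bar y)(u,h)+{\rm d}^2\delta_X(\bar x;-{\rm d}_xf(\bar x,\bar y))(u)-{\rm d}^2\delta_Y(\bar y;{\rm d}_yf(\bar x,\bar y))(h)$ equals
\[
\max_{\alpha\in\Lambda_{\min}(\bar x,\bar y)}\ \min_{\beta\in\Lambda_{\max}(\bar x,\bar y)}\big\{\nabla^2_{(x,y)}L_{\min}(\bar x,\bar y,\alpha,\beta)((u,h),(u,h))-\sigma_{T^2_C(\phi(\bar x);\nabla\phi(\bar x)u)}(\alpha)+\sigma_{T^2_D(\varphi(\bar y);\nabla\varphi(\bar y)h)}(\beta)\big\},
\]
so ``$>0$'' (resp.\ ``$\ge 0$'') for some $h\in C_{\max}(\bar x,\bar y)$ is equivalent to the existence of $h\in C_{\max}(\bar x,\bar y)$ and $\alpha\in\Lambda_{\min}(\bar x,\bar y)$ making the bracket $>0$ (resp.\ $\ge0$) for all $\beta\in\Lambda_{\max}(\bar x,\bar y)$, which is precisely \eqref{cxlower3-non-new} in the form of (a) (resp.\ \eqref{eqn4.11} in the form of (b)); the second-order growth inequality in (a) is the one delivered by Theorem~\ref{main}(a). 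It remains to supply \eqref{firstordercond1}--\eqref{firstordercond2}: nonemptiness of $\Lambda_{\min}(\bar x,\bar y)$ (presupposed by the hypotheses of (a), and in (b) following from Theorem~\ref{1st-non}(c) and MSCQ) gives, for $\alpha\in\Lambda_{\min}(\bar x,\bar y)$ and $u\in T_X(\bar x)$, ${\rm d}_xf(\bar x,\bar y)(u)=-\langle\alpha,\nabla\phi(\bar x)u\rangle\ge0$ by convexity of $C$, and symmetrically for $\Lambda_{\max}(\bar x,\bar y)$ and $Y$.

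I expect the main obstacle to be purely clerical: correctly bookkeeping the two min--max interchanges and the sign conventions — a $+{\rm d}^2\delta_X$ term produces a ``$\max$ over $\alpha$'', hence an existential quantifier on $\alpha$ together with a $-\sigma_{T^2_C}$, whereas a $-{\rm d}^2\delta_Y$ term produces a ``$-\max$ over $\beta$'', i.e.\ a universal quantifier on $\beta$ together with a $+\sigma_{T^2_D}$ — and verifying at each step that the second subderivatives in play are finite, so that the relevant ``$\max$'' over a multiplier set is not $-\infty$ and the separation inequalities used inside Theorem~\ref{main} become equalities in this smooth setting. A secondary point requiring care is the legitimacy of applying Propositions~\ref{system}/\ref{Tm3.6ABM} with $\bar v=-\nabla_xf(\bar x,\bar y)$ and $\bar v=\nabla_yf(\bar x,\bar y)$: this needs these vectors to lie in $N^p_X(\bar x)$ and $N^p_Y(\bar y)$, which is ensured by the first-order conditions combined with the identity $N^p=\widehat N=N$ furnished by Proposition~\ref{system}.
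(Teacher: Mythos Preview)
Your proposal is correct and follows essentially the same route as the paper: verify that the smooth $f$ satisfies the standing hypotheses of Theorem~\ref{main}, use Propositions~\ref{system} and~\ref{Tm3.6ABM} under MSCQ and the parabolic assumptions to identify the critical cones with $C_{\min},C_{\max}$, to obtain twice epi-differentiability of $\delta_Y$, and to compute ${\rm d}^2\delta_X,{\rm d}^2\delta_Y$ via Proposition~\ref{Calculation}(iii), and then substitute into the conditions of Theorem~\ref{main}, converting the resulting $\max_\alpha$ and $-\max_\beta$ into the existential/universal quantifiers on the multipliers. Your treatment of the first-order conditions via nonemptiness of the multiplier sets and your remark that $N^p=\widehat N=N$ legitimizes applying Proposition~\ref{Tm3.6ABM} with $\bar v=-\nabla_xf(\bar x,\bar y)$ and $\bar v=\nabla_yf(\bar x,\bar y)$ exactly mirror the paper's steps (ii)--(iii).
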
}

\begin{proof}

 (i) Since $f$ is twice continuously differentiable, it is obvious that $f$ is twice semidifferentiable, Lipschitz continuous, and the separation property holds at any points.

{ (ii) We show that the value ${\rm d}^2 \delta_X(\bar x;-\nabla_xf(\bar x,\bar y))(u)$ is finite for any $u\in T_{X}(\bar x) \cap\{ \nabla_x f(\bar x,\bar y)\}^\perp$. Besides, the nonemptyness of the multiplier sets $\Lambda_{\min}(\bar x,\bar y)$ and $\Lambda_{\max}(\bar x,\bar y)$  is equivalent to the first-order conditions \eqref{firstordercond1} and \eqref{firstordercond2}.

By Proposition \ref{system}, $N_{X}^{p}(\bar x)=\widehat{N}_{X}(\bar x)=N_{X}(\bar x)$ and $N_{Y}^{p}(\bar y)=\widehat{N}_{Y}(\bar y)=N_{Y}(\bar y)$. Since $\alpha \in \Lambda_{\min}(\bar x,\bar y)$ and $\beta \in \Lambda_{\max}(\bar x,\bar y)$, we have that $-\nabla_x f(\bar x,\bar y)=\nabla \phi(\bar x)\alpha \in \nabla \phi(\bar x)N_C(\phi(\bar x))$ and $\nabla_y f(\bar x,\bar y)=\nabla \varphi(\bar y)\beta \in \nabla \varphi(\bar y)N_D(\varphi(\bar y))$. With \cite[Proposition 4.2]{ABM21}, this is equivalent to saying that
$$-\nabla_x f(\bar x,\bar y) \in N_{X}(\bar x), \quad \nabla_y f(\bar x,\bar y) \in N_{Y}(\bar y),$$
 which, by \eqref{normalcone-equ}, is equivalent to saying that
\begin{align*}
\nabla_xf(\bar x,\bar y)^Tu\geq 0 \quad &\text { for all } u\in T_{X}(\bar x), \\
\nabla_y f(\bar x,\bar y)^Th\leq 0 \quad  &\text { for all } h \in T_{ Y}(\bar y).
   \end{align*}
By Proposition \ref{Calculation}, the value ${\rm d}^2 \delta_X(\bar x;-\nabla_xf(\bar x,\bar y))(u)$ is finite for any $u\in T_{X}(\bar x) \cap \{ \nabla_x f(\bar x,\bar y)\}^\perp$.

(iii) We show that $\delta_Y$ is twice epi-differentiable at $\bar y$ for $\nabla_y f(\bar x,\bar y)$. By Proposition \ref{system} and the MSCQ assumption, $Y$ is parabolically regular at $\bar y$ for $ \nabla_yf(\bar x,\bar y) \in N_{Y}(\bar y)$ and is parabolically derivable at $\bar y$ for all vectors in $T_Y(\bar y) \cap \{\nabla_yf(\bar x,\bar y)\}^\perp$. Applying Proposition \ref{Tm3.6ABM},
$\delta_Y$ is twice epi-differentiable at $\bar y$ for $\nabla_y f(\bar x,\bar y)$. }

(iv) Using Proposition \ref{Calculation} (iii), {we have
$$
{\rm d}^2 \delta_X(\bar x;-\nabla_xf(\bar x,\bar y))(u)= \max_{\alpha \in \Lambda_{\min}(\bar x,\bar y)} \{ \langle \alpha, \nabla^2 \phi(\bar x)(u,u) \rangle -\sigma_{T_{C}^{2}(\phi(\bar x);\nabla \phi(\bar x)u)}(\alpha)\},
$$
\begin{equation*}
\begin{aligned}
-{\rm d}^2 \delta_Y(\bar y;\nabla_yf(\bar x,\bar y))(h) & = & -\max_{\beta \in \Lambda_{\max}(\bar x,\bar y)} \{ \langle \beta, \nabla^2 \varphi(\bar y)(h,h) \rangle -\sigma_{T_{D}^{2}(\varphi(\bar y);\nabla \varphi(\bar y)h)}(\beta)\} \\
& = &\min_{\beta \in \Lambda_{\max}(\bar x,\bar y)} \{- \langle \beta, \nabla^2 \varphi(\bar y)(h,h) \rangle +\sigma_{T_{D}^{2}(\varphi(\bar y);\nabla \varphi(\bar y)h)}(\beta)\}.
\end{aligned}
\end{equation*}
Thus,
\begin{equation*}
\begin{aligned}
&\quad\ \nabla^2 f(\bar x,\bar y)((u,h),(u,h)) +{\rm d}^2 \delta_X(\bar x;-\nabla_xf(\bar x,\bar y))(u)-{\rm d}^2 \delta_Y(\bar y;\nabla_yf(\bar x,\bar y))(h) \\
& =\nabla^2 f(\bar x,\bar y)((u,h),(u,h)) +\max_{\alpha \in \Lambda_{\min}(\bar x,\bar y)} \{ \langle \alpha, \nabla^2 \phi(\bar x)(u,u) \rangle -\sigma_{T_{C}^{2}(\phi(\bar x);\nabla \phi(\bar x)u)}(\alpha)\} \\
& \quad+ \min_{\beta \in \Lambda_{\max}(\bar x,\bar y)} \{- \langle \beta, \nabla^2 \varphi(\bar y)(h,h) \rangle +\sigma_{T_{D}^{2}(\varphi(\bar y);\nabla \varphi(\bar y)h)}(\beta)\}.
\end{aligned}
\end{equation*}}
With Theorem \ref{main}, we have the desired optimality conditions.
\end{proof}

Next, we consider the special case where the constraint sets $X$ and $Y$ involving only equalities and inequalities. Let $C=\mathbb{R}^{p_1}_- \times \{0\}^{p_2}$ and $D=\mathbb{R}^{q_1}_- \times \{0\}^{q_2}$. Then we have
 $$C_{\min}(\bar x,\bar y)=\left\{u \in \mathbb{R}^n \mid \nabla \phi_i(\bar x) u \leq 0, i \in I_\phi(\bar x) ; \nabla \phi_j(\bar x) u=0, j=1,...,p_2 ; \nabla_x f\left(\bar x,\bar y\right) u = 0\right\},$$
  $$C_{\max}(\bar x,\bar y)=\left\{h \in \mathbb{R}^m \mid \nabla \varphi_i(\bar x) h \leq 0, i \in I(\bar y) ; \nabla \varphi_j(\bar y) h=0, j=1,...,q_2 ; \nabla_y f\left(\bar x,\bar y\right) h = 0\right\},$$
where $I_\phi(\bar x):=\{i=1,...,p_1 \mid \phi_i(\bar x)=0\}$ and $I_\varphi(\bar y):=\{i=1,...,q_1\mid\varphi_i(\bar y)=0\}$,
\begin{eqnarray*}
\Lambda_{\min}(\bar x,\bar y)&=& \left \{ \alpha:=(\alpha_1,\alpha_2) \in \mathbb{R}^{p_1}_+ \times \mathbb{R}^{p_2} \mid \nabla_x f(\bar x,\bar y)+\nabla \phi(\bar x)^T\alpha=0,  \alpha_1 \perp \phi_{\leq}(\bar x) \right \},\\
\Lambda_{\max}(\bar x,\bar y)&=& \left \{ \beta:=(\beta_1,\beta_2) \in \mathbb{R}^{q_1}_+ \times \mathbb{R}^{q_2} \mid -\nabla_y f(\bar x,\bar y)+ \nabla \varphi(\bar y)^T\beta=0,  \beta_1 \perp \varphi_{\leq}(\bar y) \right \},
\end{eqnarray*}
where $\phi_{\leq}(\bar x):=(\phi_1(\bar x), ... ,\phi_{p_1}(\bar x))^T$ and $\varphi_{\leq}(\bar y):=(\varphi_1(\bar y), ... ,\varphi_{q_1}(\bar y))^T$.
 \begin{theorem}[inequalities and equalities systems]\label{cor5.1}
 Let  $C=\mathbb{R}^{p_1}_- \times \{0\}^{p_2}$, $D:=\mathbb{R}^{q_1}_- \times \{0\}^{q_2}$ and $(\bar x,\bar y) \in X \times  Y$. Suppose that the MSCQ holds for the system $\phi(x)\in C$ at $\bar x$ and $\varphi(y)\in D$ at $\bar y$, respectively (e.g., when the MFCQ holds or the functions $\phi(x)$ and $\varphi(y)$ are linear).
  \begin{itemize}
  \item[(a)] Suppose that for any $h\in C_{\max}(\bar x,\bar y) \setminus \{0\}$, there exists a multiplier $\beta \in \Lambda_{\max}(\bar x,\bar y) $ such that
\begin{equation*}
h^T \nabla^2_{yy}L_{\max}(\bar y,\beta; \bar x) h < 0,
\end{equation*}
and for any $u\in C_{\min}(\bar x,\bar y) \setminus \{0\}$, there exist $h\in C_{\max}(\bar x,\bar y)$  and a multiplier $\alpha \in \Lambda_{\min}(\bar x,\bar y)$ such that for any $\beta \in \Lambda_{\max}(\bar x,\bar y) $,
\begin{equation}\label{eqn5.2}
\nabla^2_{(x,y)} L_{\min}(\bar x,\bar y,\alpha,\beta)((u,h),(u,h))> 0.
\end{equation}
Then, $(\bar x,\bar y)$ is a calm local minimax point to the problem \eqref{set-constrained}
 {with the second-order growth condition.
}

 \item[(b)] Suppose that $(\bar x,\bar y)$ is a calm local minimax point to problem \eqref{set-constrained}.
Then for any $h \in C_{\max}(\bar x,\bar y)$, there exists a multiplier $\beta \in \Lambda_{\max}(\bar x,\bar y)$ such that
\begin{equation}\label{eqn5.5}
h^T \nabla^2_{yy} L_{\max}(\bar y,\beta; \bar x) h \leq 0,
\end{equation} and for any $u \in C_{\min}(\bar x,\bar y)$, there exist $h \in C_{\max}(\bar x,\bar y)$ and a multiplier $\alpha \in \Lambda_{\min}(\bar x,\bar y)$ such that for any $\beta \in \Lambda_{\max}(\bar x,\bar y) $,
\begin{equation}\label{eqn5.4}
\nabla^2_{(x,y)} L_{\min}(\bar x,\bar y,\alpha,\beta)((u,h),(u,h)) \geq 0.
\end{equation}
\end{itemize}
   \end{theorem}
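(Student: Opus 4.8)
The plan is to derive Theorem~\ref{cor5.1} as the specialization of Theorem~\ref{second-dual} to the polyhedral data $C=\mathbb{R}^{p_1}_-\times\{0\}^{p_2}$ and $D=\mathbb{R}^{q_1}_-\times\{0\}^{q_2}$. First I would check that the standing hypotheses of Theorem~\ref{second-dual} are in force. The MSCQ for $\phi(x)\in C$ at $\bar x$ and for $\varphi(y)\in D$ at $\bar y$ is assumed, and as recorded after the definition of MSCQ it is implied by the MFCQ, or by linearity of $\phi,\varphi$ together with polyhedrality of $C,D$, via \cite[Theorem~7.4]{YeZhou18}. Since $C$ and $D$ are convex polyhedral, they are parabolically derivable at $\phi(\bar x)$ (resp.\ $\varphi(\bar y)$) for every vector and parabolically regular there for every element of the corresponding proximal normal cone, as noted in the paragraph following Proposition~\ref{Tm3.6ABM}; in particular the parabolic conditions on $C$ and $D$ required in Theorem~\ref{second-dual} hold automatically. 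Finally, under the MSCQ the linearized cone formula \eqref{linearizedcone} describes $T_X(\bar x)$ and $T_Y(\bar y)$, and since $f$ is $C^2$ one has ${\rm d}_xf(\bar x,\bar y)(u)=\nabla_xf(\bar x,\bar y)u$ and ${\rm d}_yf(\bar x,\bar y)(h)=\nabla_yf(\bar x,\bar y)h$, so the critical sets $T_X(\bar x)\cap\{u'\mid{\rm d}_xf(\bar x,\bar y)(u')=0\}$ and $T_Y(\bar y)\cap\{h'\mid{\rm d}_yf(\bar x,\bar y)(h')=0\}$ of Theorem~\ref{second-dual} (equivalently of Theorem~\ref{main}) coincide with $C_{\min}(\bar x,\bar y)$ and $C_{\max}(\bar x,\bar y)$, and $\Lambda_{\min},\Lambda_{\max}$ reduce to the explicit forms displayed before the statement.

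The only remaining point I would need to establish is that the ``sigma'' terms in Theorem~\ref{second-dual} vanish in the polyhedral case, so that $\nabla^2_{(x,y)}L_{\min}(\bar x,\bar y,\alpha,\beta)((u,h),(u,h))-\sigma_{T_C^2}(\alpha)+\sigma_{T_D^2}(\beta)$ collapses to $\nabla^2_{(x,y)}L_{\min}(\bar x,\bar y,\alpha,\beta)((u,h),(u,h))$, and likewise for the maximization part. Fix $u\in C_{\min}(\bar x,\bar y)$ and $\alpha\in\Lambda_{\min}(\bar x,\bar y)$. Then $\nabla\phi(\bar x)u\in T_C(\phi(\bar x))$, and combining $\nabla_xf(\bar x,\bar y)=-\nabla\phi(\bar x)^T\alpha$ with $\nabla_xf(\bar x,\bar y)u=0$ yields the complementarity identity $\langle\alpha,\nabla\phi(\bar x)u\rangle=0$. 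By \eqref{poly-2ndtangent}, $T_C^2(\phi(\bar x);\nabla\phi(\bar x)u)=T_C(\phi(\bar x))+\mathbb{R}\nabla\phi(\bar x)u$, hence, using additivity of support functions over Minkowski sums,
\[
\sigma_{T_C^2(\phi(\bar x);\nabla\phi(\bar x)u)}(\alpha)=\sigma_{T_C(\phi(\bar x))}(\alpha)+\sup_{t\in\mathbb{R}}t\,\langle\alpha,\nabla\phi(\bar x)u\rangle=0,
\]
since $\alpha\in N_C(\phi(\bar x))$ lies in the polar of the cone $T_C(\phi(\bar x))$ (so the first term is $0$) and $\langle\alpha,\nabla\phi(\bar x)u\rangle=0$ (so the second term is $0$ rather than $+\infty$). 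The identical computation with $\varphi,D,\beta,h$ in place of $\phi,C,\alpha,u$ gives $\sigma_{T_D^2(\varphi(\bar y);\nabla\varphi(\bar y)h)}(\beta)=0$ for $h\in C_{\max}(\bar x,\bar y)$, $\beta\in\Lambda_{\max}(\bar x,\bar y)$; alternatively this can be read off from \cite[Exercise~13.17]{RoWe98} (cf.\ Proposition~\ref{Calculation}(ii)), which gives ${\rm d}^2\delta_C(\phi(\bar x);\alpha)(\nabla\phi(\bar x)u)=0$ on the relevant critical set and hence the vanishing of the support function of the second-order tangent set via Proposition~\ref{Tm3.6ABM}.

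Substituting these vanishing sigma terms into Theorem~\ref{second-dual}(a) then reproduces exactly the strict inequality on $h^T\nabla^2_{yy}L_{\max}(\bar y,\beta;\bar x)h$ and the condition \eqref{eqn5.2} in part~(a), together with the calm local minimax property and the second-order growth condition; substituting into Theorem~\ref{second-dual}(b) reproduces \eqref{eqn5.5} and \eqref{eqn5.4} in part~(b). I expect the only genuinely delicate step to be the vanishing of $\sigma_{T_C^2}$ and $\sigma_{T_D^2}$ on the critical cones --- in particular, handling the $\mathbb{R}\nabla\phi(\bar x)u$ component of the second-order tangent set correctly, which is where the complementarity identity $\langle\alpha,\nabla\phi(\bar x)u\rangle=0$ (and its analogue for $\beta,h$) is essential; everything else is a direct transcription of Theorem~\ref{second-dual} together with the standard tangent-cone and multiplier-set formulas for inequality--equality systems.
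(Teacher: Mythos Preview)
Your proposal is correct and follows essentially the same route as the paper: specialize Theorem~\ref{second-dual} to the polyhedral case and observe that the sigma terms vanish. The paper's own proof is a one-liner invoking Proposition~\ref{Calculation}(ii) together with Theorem~\ref{second-dual}; your direct computation of $\sigma_{T_C^2(\phi(\bar x);\nabla\phi(\bar x)u)}(\alpha)=0$ via \eqref{poly-2ndtangent} and the complementarity identity is simply an explicit unpacking of that citation (and you correctly note this alternative yourself).
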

\begin{proof}
Since $C$ and $D$ are convex polyhedral, the result follows from Proposition \ref{Calculation} (ii) and Theorem \ref{second-dual}.
\end{proof}

To compare our results with the existing literature, which primarily focused on local minimax points, we first establish the following sufficient condition under which  calm local minimax points coincide with local minimax points.

\begin{lemma}\label{lemma5.1}
Let $\left(\bar x,\bar y\right)$ be a local minimax point of problem \eqref{set-constrained} with  $C=\mathbb{R}^{p_1}_- \times \{0\}^{p_2}$ and $D=\mathbb{R}^{q_1}_- \times \{0\}^{q_2}$. Suppose that the MSCQ holds for the system $\phi(x)\in C$ at $\bar x$ and $\varphi(y)\in D$ at $\bar y$, respectively. Moreover, assume that the weak sufficient condition holds, i.e., for any $h\in C_{\max}(\bar x,\bar y) \setminus \{0\}$, there exists a multiplier $\beta \in \Lambda_{\max}(\bar x,\bar y) $ such that
\begin{equation*}
h^T \nabla^2_{yy}L_{\max}(\bar y,\beta; \bar x) h < 0.
\end{equation*}
Then $(\bar x, \bar y)$ is a calm local minimax point.
\end{lemma}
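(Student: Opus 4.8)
The plan is to route the proof through the equivalent description of calm local minimax points in Lemma~\ref{calmlocamm-equi}, which lets me ignore the behavior of $f$ in $x$ altogether: it suffices to show that, with $\tau$ a radius function witnessing that $(\bar x,\bar y)$ is a local minimax point, the localized maximizer map $S_{\tau(\cdot)}(x)=\argmax_{y\in Y\cap\mathbb{B}_{\tau(\|x-\bar x\|)}(\bar y)}f(x,y)$ is inner calm at $(\bar x,\bar y)$. The first step is to convert the weak sufficient condition into a quadratic growth bound for the inner problem. Since $f\in C^2$ it is twice semidifferentiable and locally Lipschitz, and since $(\bar x,\bar y)$ is a local minimax point, $\bar y$ is a local maximizer of $f(\bar x,\cdot)$ on $Y$; combined with the MSCQ at $\bar y$ this gives $\nabla_y f(\bar x,\bar y)\in N_Y(\bar y)$, $\Lambda_{\max}(\bar x,\bar y)\neq\emptyset$, and ${\rm d}_y f(\bar x,\bar y)(h)\le 0$ for $h\in T_Y(\bar y)$, where $C_{\max}(\bar x,\bar y)=T_Y(\bar y)\cap\{h:\nabla_y f(\bar x,\bar y)^T h=0\}$ by \eqref{linearizedcone1}. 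Using Proposition~\ref{Calculation}(iii) (applicable because $D$ is convex polyhedral and MSCQ holds), ${\rm d}^2\delta_Y(\bar y;{\rm d}_y f(\bar x,\bar y))(h)=\max_{\beta\in\Lambda_{\max}(\bar x,\bar y)}\langle\beta,\nabla^2\varphi(\bar y)(h,h)\rangle$ is finite on $C_{\max}(\bar x,\bar y)$, the polyhedral sigma term vanishing since $\langle\beta,\nabla\varphi(\bar y)h\rangle=\langle\nabla_y f(\bar x,\bar y),h\rangle=0$. Hence the weak sufficient condition is exactly the statement that $-{\rm d}^2_{yy}f(\bar x,\bar y)(h)+{\rm d}^2\delta_Y(\bar y;{\rm d}_y f(\bar x,\bar y))(h)>0$ for all $h\in C_{\max}(\bar x,\bar y)\setminus\{0\}$, so Proposition~\ref{non-optimality2}(ii) applied to $-f(\bar x,\cdot)$ on $S=Y$ yields $\varepsilon>0$ and $\eta>0$ with
\[
 f(\bar x,y)\le f(\bar x,\bar y)-\varepsilon\|y-\bar y\|^2\qquad\forall\,y\in Y\cap\mathbb{B}_\eta(\bar y).
\]

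The second step is the inner-calmness estimate. Shrinking $\delta_0$ we may assume $\tau(\delta)\le\eta$ for $\delta\in(0,\delta_0]$. Fix $x_k\to\bar x$ with $x_k\in X$ and any $y_k\in S_{\tau(\cdot)}(x_k)$ (nonempty, by compactness of $Y\cap\mathbb{B}_{\tau(\|x_k-\bar x\|)}(\bar y)$ and continuity of $f$); then $\|y_k-\bar y\|\le\tau(\|x_k-\bar x\|)\to 0$, and since $\bar y$ lies in the feasible set of this maximization, $f(x_k,y_k)\ge f(x_k,\bar y)$. For $k$ large, $\|y_k-\bar y\|\le\eta$, so the growth bound gives $f(\bar x,y_k)\le f(\bar x,\bar y)-\varepsilon\|y_k-\bar y\|^2$. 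Subtracting these,
\[
 \varepsilon\|y_k-\bar y\|^2\le\bigl[f(x_k,y_k)-f(x_k,\bar y)\bigr]-\bigl[f(\bar x,y_k)-f(\bar x,\bar y)\bigr]=g(x_k)-g(\bar x),
\]
where $g(x):=f(x,y_k)-f(x,\bar y)=\int_0^1\nabla_y f\bigl(x,\bar y+s(y_k-\bar y)\bigr)^T(y_k-\bar y)\,ds$. The decisive point is that the first-order-in-$x$ term $\nabla_x f(\bar x,\bar y)^T(x_k-\bar x)$ cancels, because the two $f$-differences are taken at the same $x$; by the local Lipschitz continuity of $\nabla_y f$ in $x$ on a neighborhood of $(\bar x,\bar y)$ (modulus $L'$, available since $f\in C^2$ and $x_k,y_k$ converge), one gets $g(x_k)-g(\bar x)\le L'\|x_k-\bar x\|\,\|y_k-\bar y\|$. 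Hence $\varepsilon\|y_k-\bar y\|^2\le L'\|x_k-\bar x\|\,\|y_k-\bar y\|$, i.e. $\|y_k-\bar y\|\le(L'/\varepsilon)\|x_k-\bar x\|$, which is inner calmness of $S_{\tau(\cdot)}$ at $(\bar x,\bar y)$; Lemma~\ref{calmlocamm-equi} then gives that $(\bar x,\bar y)$ is a calm local minimax point.

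I expect the main obstacle to be recognizing that one should \emph{not} attempt to derive this from the second-order criterion in Corollary~\ref{local-to-lip2}: that criterion involves ${\rm d}^2\delta_X(\bar x;{\rm d}_x f(\bar x,\bar y))(0)$, which equals $-\infty$ whenever $-\nabla_x f(\bar x,\bar y)\notin\widehat N_X(\bar x)$, and a local minimax point need not satisfy this $x$-stationarity (for instance $\min_{x\ge 0}\max_{y}(x-y^2)$ at $(0,0)$ satisfies all hypotheses here yet violates it). So the argument must isolate the inner problem entirely, which is what the reduction to inner calmness of $S_{\tau(\cdot)}$ accomplishes; the technical heart is then the cancellation of the $\nabla_x f(\bar x,\bar y)$ contribution obtained by comparing $f$-values at a common $x_k$, after which the quadratic growth on the $y$-side closes the estimate linearly rather than with a square root. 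A minor point requiring care is checking nonemptiness of $S_{\tau(\cdot)}(x_k)$ and that $\|y_k-\bar y\|\le\tau(\|x_k-\bar x\|)\to 0$, so that all the local expansions and Lipschitz bounds are legitimate for large $k$; one should also note that the polyhedrality of $C$ plays no role in this lemma — only that of $D$ — it being retained in the statement for consistency with Theorem~\ref{cor5.1}.
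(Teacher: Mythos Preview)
Your proof is correct and takes a genuinely different route from the paper's. The paper argues via Corollary~\ref{local-to-lip2}: it asserts ${\rm d}^2\delta_X(\bar x;\nabla_x f(\bar x,\bar y))(0)=0$ (citing Proposition~\ref{Calculation}) and then reads off condition~\eqref{2nddescentnew} from the weak sufficient condition. You instead bypass the $x$-side second subderivative entirely by (i) converting the weak sufficient condition into second-order growth for $-f(\bar x,\cdot)$ on $Y$ via Proposition~\ref{non-optimality2}(ii), and (ii) using the cancellation identity
\[
\varepsilon\|y_k-\bar y\|^2\le\bigl[f(x_k,y_k)-f(x_k,\bar y)\bigr]-\bigl[f(\bar x,y_k)-f(\bar x,\bar y)\bigr]\le L'\|x_k-\bar x\|\,\|y_k-\bar y\|
\]
to obtain inner calmness of $S_{\tau(\cdot)}$ and conclude by Lemma~\ref{calmlocamm-equi}. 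This is more elementary and, as you sensed, more robust: your example $\min_{x\ge 0}\max_y(x-y^2)$ at $(0,0)$ indeed satisfies every hypothesis of the lemma, yet ${\rm d}^2\delta_{[0,\infty)}(0;\nabla_x f)(0)={\rm d}^2\delta_{[0,\infty)}(0;1)(0)=-\infty$ (take $u'=\sqrt t$), so condition~\eqref{2nddescentnew} of Corollary~\ref{local-to-lip2} fails there even though the conclusion holds.

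One correction to your closing remark: the quantity ${\rm d}^2\delta_X(\bar x;\nabla_x f)(0)$ equals $-\infty$ whenever $\nabla_x f(\bar x,\bar y)\notin\widehat N_X(\bar x)$, i.e.\ when some $u\in T_X(\bar x)$ has $\nabla_x f^Tu>0$ --- not when $-\nabla_x f\notin\widehat N_X(\bar x)$. In your own example $-\nabla_x f=-1$ does lie in $\widehat N_X(0)=(-\infty,0]$; it is $\nabla_x f=1\notin\widehat N_X(0)$ that forces the blow-up. In other words, the obstruction arises precisely when the expected first-order $x$-stationarity holds strictly in some tangent direction, which is the generic situation at a constrained minimizer. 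Your argument sidesteps this completely.
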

\begin{proof}
By Proposition \ref{Calculation} (ii) and (iii),
$
{\rm d}^2 \delta_{X }(\bar x;\nabla_xf(\bar x,\bar y))(0)=0$ and
$$-{\rm d}^2 \delta_{Y }(\bar y;\nabla_yf(\bar x,\bar y))(h)=-\max_{\beta \in \Lambda_{\max}(\bar x,\bar y)} \langle \beta, \nabla^2 \varphi(\bar y)(h,h) \rangle = \min_{\beta \in \Lambda_{\max}(\bar x,\bar y)} \langle \beta, -\nabla^2 \varphi(\bar y)(h,h) \rangle.$$ Thus, the condition \eqref{2nddescentnew} in Corollary \ref{local-to-lip2} holds and we have the desired result.
\end{proof}

We now compare our results with the one obtained by
Dai and Zhang in \cite[Theorems 3.2 and  3.1]{DaiZh-2020} for the case where the constraints for the maximization problem is independent of $x$. From the proof of Corollary \ref{Thm3.1D}, we can see that \eqref{eqn5.2} and   \eqref{eqn5.4} implies \eqref{eqn5.7} and \eqref{eqn5.6} respectively and hence our sufficient and necessary conditions in Theorem \ref{cor5.1}(a) and Theorem \ref{cor5.1}(b) are sharper than the one in  Corollary \ref{Thm3.1D}(i) and  Corollary \ref{Thm3.1D}(ii) respectively.  Note that Corollary \ref{Thm3.1D} obtains the same sufficient and necessary optimality conditions as the one given  in \cite[Theorems 3.2 and 3.1]{DaiZh-2020} under much weaker assumptions. In particular we do not need to assume that the Jacobian uniqueness condition holds.

\begin{corollary}\label{Thm3.1D}   Let  $C=\mathbb{R}^{p_1}_- \times \{0\}^{p_2}$, $D=\mathbb{R}^{q_1}_- \times \{0\}^{q_2}$ and $(\bar x,\bar y) \in X \times  Y$. Suppose that the MSCQ holds for the system $\phi(x)\in C$ at $\bar x$ and $\varphi(y)\in D$ at $\bar y$, respectively (e.g., when the MFCQ holds or the functions $\phi(x)$ and $\varphi(y)$ are linear).
\begin{itemize}
\item[(i)]Suppose that there exists a multiplier $\bar \beta \in \Lambda_{\max}(\bar x,\bar y)$ such that $\nabla_{yy}^2 L_{\max}(\bar y,\bar \beta;\bar x) \prec 0$. Suppose further that for any $u\in C_{\min}(\bar x,\bar y) \setminus \{0\}$, there exist $h\in C_{\max}(\bar x,\bar y)$ and a multiplier $\alpha \in \Lambda_{\min}(\bar x,\bar y)$ such that for any $\beta \in \Lambda_{\max}(\bar x,\bar y)$,
\begin{equation}\label{eqn5.7}
\nabla^2_{(x,y)} L_{\min}(\bar x,\bar y,\alpha,\beta)((u,h),(u,h))> 0.
\end{equation}
Then, $(\bar x,\bar y)$ is a local minimax point to problem \eqref{set-constrained} with the second-order growth condition.
\item[(ii)] Let $\left(\bar x,\bar y\right)$ be a local minimax point of problem \eqref{set-constrained}. Assume that there exists a multiplier $\beta \in \Lambda_{\max}(\bar x,\bar y)$ such that the matrix $\nabla_{yy}^2 L_{\max}(\bar y,\beta;\bar x)$ is nonsingular. Then for any $u \in C_{\min}(\bar x,\bar y)$, there exists a multiplier $\alpha \in \Lambda_{\min}(\bar x,\bar y)$ such that
\begin{equation}\label{eqn5.6}
u^T \left(\sum_{i=1}^{p} \alpha_i \nabla^2 \phi_i(\bar x) +[\nabla_{xx}^2 L_{\max}-\nabla_{xy}^2 L_{\max}(\nabla_{yy}^2 L_{\max})^{-1}\nabla_{yx}^2 L_{\max}](\bar y,\beta;\bar x) \right)u \geq 0.
\end{equation}
\end{itemize}
\end{corollary}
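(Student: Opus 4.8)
First I would reduce everything to the general second-order result Theorem~\ref{cor5.1} together with the local/calm-local bridge of Section~\ref{section-points}; the only gap between Corollary~\ref{Thm3.1D} and Theorem~\ref{cor5.1} is that here the maximization-side hypotheses are carried by a single ``master'' multiplier whose reduced Hessian $\nabla_{yy}^2 L_{\max}(\bar y,\beta;\bar x)$ is negative definite (resp.\ merely nonsingular) instead of direction-dependent, and the coupling conditions are condensed into a Schur complement. For part~(i) this reduction is immediate: if $\bar\beta\in\Lambda_{\max}(\bar x,\bar y)$ with $\nabla_{yy}^2 L_{\max}(\bar y,\bar\beta;\bar x)\prec 0$, then $h^{T}\nabla_{yy}^2 L_{\max}(\bar y,\bar\beta;\bar x)h<0$ for every $h\in C_{\max}(\bar x,\bar y)\setminus\{0\}$ with the \emph{same} multiplier $\bar\beta$, which is exactly the second-order sufficient condition for the maximization demanded by Theorem~\ref{cor5.1}(a), while hypothesis~\eqref{eqn5.7} coincides with~\eqref{eqn5.2}; Theorem~\ref{cor5.1}(a) then yields that $(\bar x,\bar y)$ is a calm local minimax point (so a local minimax point) with the asserted second-order growth.

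For part~(ii) I would first pass from ``local'' to ``calm local''. Since $(\bar x,\bar y)$ is a local minimax point, $\bar y$ is a local maximizer of $f(\bar x,\cdot)$ on $Y$, so the second-order necessary condition for this maximization holds on $C_{\max}(\bar x,\bar y)$; combined with the existence of a $\beta\in\Lambda_{\max}(\bar x,\bar y)$ with $\nabla_{yy}^2 L_{\max}(\bar y,\beta;\bar x)$ nonsingular, this should upgrade the necessary ``$\le 0$'' to a strict ``$<0$'' on $C_{\max}(\bar x,\bar y)\setminus\{0\}$ (vacuously when $C_{\max}(\bar x,\bar y)=\{0\}$)---that is, the weak sufficient condition of Lemma~\ref{lemma5.1}. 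By Lemma~\ref{lemma5.1} (equivalently Corollary~\ref{local-to-lip2}) the point $(\bar x,\bar y)$ is then a calm local minimax point, so Theorem~\ref{cor5.1}(b) applies and gives~\eqref{eqn5.5} together with: for every $u\in C_{\min}(\bar x,\bar y)$ there exist $h\in C_{\max}(\bar x,\bar y)$, $\alpha\in\Lambda_{\min}(\bar x,\bar y)$ with $\nabla^2_{(x,y)}L_{\min}(\bar x,\bar y,\alpha,\beta')((u,h),(u,h))\ge 0$ for all $\beta'\in\Lambda_{\max}(\bar x,\bar y)$, in particular for the nonsingular multiplier $\beta$.

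It then remains to solve out $h$ and obtain the Schur-complement form~\eqref{eqn5.6}. Write $M:=\nabla_{yy}^2 L_{\max}(\bar y,\beta;\bar x)$, $h^{*}:=-M^{-1}\nabla_{yx}^2 f(\bar x,\bar y)\,u$, and $Q(w):=\nabla^2_{(x,y)}L_{\min}(\bar x,\bar y,\alpha,\beta)(w,w)$. Using $L_{\min}=f+\phi^{T}\alpha-\varphi^{T}\beta$ (so that $\nabla^2_{xx}L_{\max}=\nabla^2_{xx}f$, $\nabla^2_{xy}L_{\max}=\nabla^2_{xy}f$, $\nabla^2_{yy}L_{\min}=M$), a direct computation shows that the left-hand side of~\eqref{eqn5.6} equals $Q(u,h^{*})$ and that
\[
Q(u,h)-Q(u,h^{*})=(h-h^{*})^{T}M(h-h^{*}).
\]
In the proof of Theorem~\ref{cor5.1}(b) the direction $h$ appears as a limit $\lim_{k}(y_k-\bar y)/t_k$ of inner maximizers $y_k$ taken along $x_k=\bar x+t_k u_k\to\bar x$; nonsingularity of $M$ forces this limiting direction to be the unique solution of the linearized inner optimality system, i.e.\ $h=h^{*}$ (and in particular $h^{*}\in C_{\max}(\bar x,\bar y)$), so $Q(u,h^{*})=Q(u,h)\ge 0$, which is~\eqref{eqn5.6}.

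The hard part is the same in both halves: turning the lone nonsingularity hypothesis into genuine sensitivity information about the inner maximizer near $\bar y$---promoting the necessary ``$\le 0$'' to a strict inequality on $C_{\max}(\bar x,\bar y)$ despite possibly non-unique multipliers, and pinning the extracted direction $h$ to $h^{*}$. This is precisely where Dai and Zhang impose the full Jacobian uniqueness condition (LICQ, strict complementarity, strong second-order sufficiency); carrying it out under MSCQ alone---for instance by establishing isolated calmness of the localized solution map $S_{\tau(\cdot)}$ together with its directional-derivative formula---is the technical heart of the corollary.
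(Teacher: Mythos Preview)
Your treatment of part~(i) is correct and matches the paper exactly: the negative definiteness of $\nabla_{yy}^2 L_{\max}(\bar y,\bar\beta;\bar x)$ trivially verifies the direction-wise second-order sufficient condition of Theorem~\ref{cor5.1}(a), and the rest is identical.

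For part~(ii) your overall plan (pass to calm local minimax via Lemma~\ref{lemma5.1}, then invoke Theorem~\ref{cor5.1}(b)) agrees with the paper, and both the paper and you gloss over why mere nonsingularity of $\nabla_{yy}^2 L_{\max}(\bar y,\beta;\bar x)$ suffices for Lemma~\ref{lemma5.1}. Where you diverge sharply is in the final step: you try to argue that the direction $h\in C_{\max}(\bar x,\bar y)$ produced by Theorem~\ref{cor5.1}(b) must \emph{equal} $h^{*}$, by reopening the proof and invoking sensitivity/isolated-calmness of the inner solution map. The paper never does this, and it is unnecessary. Once Theorem~\ref{cor5.1}(b) yields some $h\in C_{\max}(\bar x,\bar y)$ and $\alpha\in\Lambda_{\min}(\bar x,\bar y)$ with $Q(u,h)\ge 0$, one simply bounds
\[
0 \le Q(u,h) \le \sup_{h'\in C_{\max}(\bar x,\bar y)} Q(u,h') \le \sup_{h'\in\mathbb{R}^m} Q(u,h') = Q(u,h^{*}),
\]
the last equality holding because $h\mapsto Q(u,h)$ is a quadratic whose Hessian $M=\nabla_{yy}^2 L_{\min}=\nabla_{yy}^2 L_{\max}$ is negative definite (this is the same hypothesis already used to invoke Lemma~\ref{lemma5.1}). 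Your own identity $Q(u,h)-Q(u,h^{*})=(h-h^{*})^{T}M(h-h^{*})\le 0$ is exactly this bound; you had the right formula but drew the wrong conclusion from it.

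Consequently your closing paragraph misidentifies the difficulty: ``pinning the extracted direction $h$ to $h^{*}$'' is not part of the proof at all, and no isolated calmness or directional-derivative formula for $S_{\tau(\cdot)}$ is needed. The only delicate point shared by both arguments is the one you do flag---upgrading nonsingularity of $\nabla_{yy}^2 L_{\max}(\bar y,\beta;\bar x)$ to the weak sufficient condition required by Lemma~\ref{lemma5.1}---and the paper treats this as immediate.
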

\begin{proof}
By Lemma \ref{lemma5.1}, the local minimax is equivalent to the calm local minimax under the assumption in (i) or (ii).

(i) Since $\nabla_{yy}^2 L_{\max}(\bar y,\bar \beta;\bar x) \prec 0$,  Theorem \ref{cor5.1} (a)(i) holds and we have the desired result.

(ii) Since $L_{\min}(\bar x,\bar y,\alpha,\beta)=L_{\max}(\bar y,\beta;\bar x) + \phi(\bar x)^T \alpha$, we have that $\nabla_{yy}^2 L_{\min}(\bar x,\bar y,\alpha,\beta)=\nabla_{yy}^2 L_{\max}(\bar y,\beta;\bar x)$ is nonsingular. For each $u\in C_{\min}(\bar x,\bar y) $, let $$h^*:=-\nabla_{yy}^2 L_{\min}(\bar x,\bar y,\alpha,\beta)^{-1}\nabla_{xy}^2 L_{\min}(\bar x,\bar y,\alpha,\beta)^T u.$$ Then we have
\begin{eqnarray*}\label{}
\begin{aligned}
& \quad\ \sup_{h \in C_{\max}(\bar x,\bar y)} \nabla^2_{(x,y)}L_{\min}(\bar x,\bar y,\alpha,\beta)((u,h),(u,h))  \\
& \leq \sup_{h\in \mathbb{R}^m} \nabla^2_{(x,y)} L_{\min}(\bar x,\bar y,\alpha,\beta)((u,h),(u,h))  \\
& = \sup_{h\in \mathbb{R}^m} \left\{ h^T\nabla^2_{yy} L_{\min}(\bar x,\bar y,\alpha,\beta)h +2 u^T\nabla^2_{xy} L_{\min}(\bar x,\bar y,\alpha,\beta)h  + u^T\nabla^2_{xx} L_{\min}(\bar x,\bar y,\alpha,\beta)u \right\}\\
& = \nabla^2_{(x,y)} L_{\min}(\bar x,\bar y,\alpha,\beta)((u,h^*),(u,h^*)) \\
& = u^T [\nabla_{xx}^2 L_{\min}-\nabla_{xy}^2 L_{\min}(\nabla_{yy}^2 L_{\min})^{-1}\nabla_{yx}^2 L_{\min}](\bar x,\bar y,\alpha,\beta) u \\
& =u^T \left(\sum_{i=1}^{p} \alpha_i \nabla^2 \phi_i(\bar x) +[\nabla_{xx}^2 L_{\max}-\nabla_{xy}^2 L_{\max}(\nabla_{yy}^2 L_{\max})^{-1}\nabla_{yx}^2 L_{\max}](\bar y,\beta;\bar x) \right)u.
\end{aligned}
\end{eqnarray*}
Hence (\ref{eqn5.6}) follows from Theorem \ref{cor5.1} (b)(ii).
\end{proof}

\subsection{Unconstrained case}
In this section we consider the unconstrained minimax problem
\begin{equation}\label{unconstrained}
  \min_{x\in \mathbb{R}^n}\max_{y\in \mathbb{R}^m} f(x,y).
  \end{equation}

In \cite[Propositions 19 and 20]{JNJ20}, Jin et al. gave some second-order optimality conditions for local minimaxity for the unconstrained minimax problem when $f$ is twice differentiable. For the case where $f$ is nonsmooth,  Theorem \ref{main}  gives necessary and sufficient optimality for the calm local minimaxity. An illustrative example is given as follows.

\begin{example}\cite[Example A.1]{JC22}\label{example-non}
Consider
$$
\min _{x \in \mathbb{R}} \max _{y \in \mathbb{R}} f(x, y):=-|x|^9+\frac{3}{5}|x|^3|y|^3-|y|^5 .
$$
We will show that $(0,0)$ is a calm local minimax point and  the necessary optimality condition in Theorem \ref{main} holds.

 Take $\tau(\delta)=\frac{3}{5}(\sqrt{\delta})^3$. Then for any $|x| \leq \delta$ and $|y| \leq \delta$ with sufficiently small $\delta \in(0,1)$ we have
$$
-|y|^5=f(0, y) \leq f(0,0) \leq \max _{y \in[-\tau(\delta), \tau(\delta)]}-|x|^9+\frac{3}{5}|x|^3|y|^3-|y|^5=-|x|^9+\frac{2}{5}\left(\frac{3}{5}\right)^4(\sqrt{|x|})^{15}
$$
where $\pm \frac{3}{5}(\sqrt{|x|})^3$ is the maximizer of the above maximization problem. Since $\tau$ is calm at $0$, $(0,0)$ is a calm local minimax point.

Obviously, $f$ is not differentiable at $(0,0)$.
{We first verify that the assumptions in Theorem \ref{main} hold at $(0,0)$. Then, we show that the necessary optimality conditions in Theorem \ref{main}(b) hold at $(0,0)$.

{(i) Denote $\varphi(\alpha,\beta):=-\alpha^9+\frac{3}{5}\alpha^3\beta^3-\beta^5, g(x):=|x|, g(y):=|y|$, and $(\bar x,\bar y)=(0,0)$. Applying Proposition \ref{chainrule}, we know that $f$ is twice semidifferentiable at $(0,0)$ and the separation property holds for the subderivative holds at $(0,0)$. Moreover,}
for any $u \in \mathbb{R}$ and $h \in \mathbb{R}$,
\begin{equation*}
\mathrm{d} f(0,0)(u,h) =\nabla \varphi(g(\bar x),g(\bar y))^T (g'(\bar x;u),g'(\bar y;h)) = 0,
\end{equation*}
\begin{equation}\label{dx-example}
\mathrm{d}_x f(0,0)(u)=\nabla_\alpha \varphi (g(\bar x),g( \bar y))g'(\bar x;u)=0,
\end{equation}
\begin{equation}\label{dy-example}
\mathrm{d}_y f(0,0)(h)=\nabla_\beta \varphi (g(\bar x),g( \bar y))g'(\bar y;h)=0,
\end{equation}
\begin{equation*}
\begin{aligned}
\mathrm{d}^2 f(0,0)(u,h)& = (g'(\bar x;u),g'(\bar y;h))^T \nabla^2 \varphi(g(\bar x),g(\bar y))(g'(\bar x;u),g'(\bar y;h))=0,
\end{aligned}
\end{equation*}
$$
\mathrm{d}_{yy}^2 f(0,0)(h)=g'(\bar y;h)^T \nabla_{yy}^2 \varphi(g(\bar x),g(\bar y))g'(\bar y;h)=0.
$$

(ii) We show that for any $u\in T_{X}(0) \cap \{u' | {\rm d}_x f(0,0)(u') = 0\},$ the value ${\rm d}^2 \delta_X(0;-{\rm d}_xf(0,0))(u)$ is finite. This can be seen from the fact that for any $u \in \mathbb{R}$,
$$
{\rm d}^2 \delta_X(0;-{\rm d}_xf(0,0))(u)= \liminf\limits_{t \downarrow 0,u' \to u} \frac{t{\rm d}_xf(0, 0)(u')}{\frac{1}{2}t^2}=0.
$$

(iii) We show that the necessary optimality conditions in Theorem \ref{main} (b) hold at $(0,0)$. By \eqref{dx-example} and \eqref{dy-example}, we know that $T_X(\bar x) \cap \{u' | {\rm d}_x f(\bar x,\bar y)(u') = 0\} =T_ Y({\bar y})\cap \{h' | {\rm d}_y f\left(\bar x, \bar y\right) h' = 0\}=\mathbb{R}$. Besides, for any $h \in \mathbb{R}$,

$$
{\rm d}^2 \delta_Y(0;{\rm d}_yf(0,0))(h)= \liminf\limits_{t \downarrow 0,h' \to h} \frac{-t{\rm d}_yf(0, 0)(h')}{\frac{1}{2}t^2}=0.
$$

Thus, for any $u, h \in \mathbb{R}$,
 \begin{equation*}\label{}
{\rm d}^2 f(0,0)(u,h) +{\rm d}^2 \delta_X(0;-{\rm d}_xf(0,0))(u)-{\rm d}^2 \delta_Y(0;{\rm d}_yf(0,0))(h) = 0.
\end{equation*}
Besides, for any $h\in \mathbb{R}$, we have
$$
{\rm d}^2_{yy} f(0,0)(h) -{\rm d}^2 \delta_Y(0;{\rm d}_yf(0,0))(h) \leq 0.
$$}
\end{example}

{Theorem \ref{second-dual}} has the following immediate corollary. We state it here for the convenience of comparison with the existing results.
\begin{theorem}[second-order  conditions for the unconstrained smooth case]\label{uncons}
 Suppose that $f$ is twice differentiable at $(\bar x, \bar y)$.
\begin{itemize}
\item[(a)]
Suppose that {$\nabla f(\bar x,\bar y)=0$,} $\nabla_{yy}^2 f(\bar x,\bar y) \prec 0$, and for any $u\in \mathbb{R}^n \setminus \{0\}$, there exists $h\in \mathbb{R}^m$ such that
\begin{equation*}\label{}
{\nabla^2} f(\bar x,\bar y)((u,h),(u,h)) >0.
\end{equation*}
Then $(\bar x,\bar y)$ is a calm local minimax point to problem \eqref{unconstrained}.
\item[(b)] Let $(\bar x,\bar y)$ be a calm local minimax point to problem \eqref{unconstrained}. Then {$\nabla f(\bar x,\bar y)=0$,} $\nabla_{yy}^2 f(\bar x,\bar y) \preceq 0$ and for any $u\in \mathbb{R}^n$, there exists $h \in \mathbb{R}^m$  such that
 \begin{equation}\label{uncons-nec}
{\nabla^2} f(\bar x,\bar y)((u,h),(u,h))  \geq 0.
\end{equation}
\end{itemize}
\end{theorem}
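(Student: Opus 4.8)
The plan is to derive Theorem \ref{uncons} directly from Theorem \ref{second-dual} by specializing to the unconstrained situation, i.e.\ taking $X=\mathbb{R}^n$ and $Y=\mathbb{R}^m$, which corresponds to the constraint data $\phi\equiv 0\in C:=\{0\}$ and $\varphi\equiv 0\in D:=\{0\}$, or more simply to $p=q=0$. With this choice the MSCQ holds trivially, and $C,D$ (being singletons, hence convex polyhedral) are parabolically derivable and parabolically regular everywhere, so all structural hypotheses of Theorem \ref{second-dual} are automatically satisfied. First I would record the degenerate form of the critical cones and multiplier sets: since there are no constraints, $C_{\min}(\bar x,\bar y)=\{u\mid \nabla_x f(\bar x,\bar y)u=0\}$, $C_{\max}(\bar x,\bar y)=\{h\mid \nabla_y f(\bar x,\bar y)h=0\}$, and $\Lambda_{\min}(\bar x,\bar y)$ is nonempty iff $\nabla_x f(\bar x,\bar y)=0$ (in which case it equals $\{0\}$), similarly $\Lambda_{\max}(\bar x,\bar y)$ is nonempty iff $\nabla_y f(\bar x,\bar y)=0$ and then equals $\{0\}$. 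All the ``sigma'' terms $\sigma_{T^2_C(\cdot;\cdot)}$ and $\sigma_{T^2_D(\cdot;\cdot)}$ vanish because the second-order tangent sets of a singleton are $\{0\}$, so $\sigma_{\{0\}}\equiv 0$. Finally $L_{\min}(x,y,0,0)=f(x,y)$ and $L_{\max}(y,0;x)=f(x,y)$, hence $\nabla^2_{(x,y)}L_{\min}=\nabla^2 f$ and $\nabla^2_{yy}L_{\max}=\nabla^2_{yy}f$.

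For part (a): assume $\nabla f(\bar x,\bar y)=0$, $\nabla^2_{yy}f(\bar x,\bar y)\prec 0$, and that for every $u\in\mathbb{R}^n\setminus\{0\}$ there is $h\in\mathbb{R}^m$ with $\nabla^2 f(\bar x,\bar y)((u,h),(u,h))>0$. I would first note that $\nabla f(\bar x,\bar y)=0$ makes $\Lambda_{\min},\Lambda_{\max}$ nonempty with unique element $0$. The hypothesis $\nabla^2_{yy}f(\bar x,\bar y)\prec 0$ gives $h^T\nabla^2_{yy}L_{\max}(\bar y,0;\bar x)h<0$ for all $h\neq 0$, in particular for all $h\in C_{\max}(\bar x,\bar y)\setminus\{0\}$, so the second-order sufficient condition for the maximization in Theorem \ref{second-dual}(a) holds. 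For the outer condition, I must supply for each $u\in C_{\min}(\bar x,\bar y)\setminus\{0\}$ (a subset of $\mathbb{R}^n\setminus\{0\}$) some $h\in C_{\max}(\bar x,\bar y)$ with $\nabla^2 f(\bar x,\bar y)((u,h),(u,h))>0$. The given hypothesis produces an $h$ with positive quadratic form, but possibly not in $C_{\max}(\bar x,\bar y)$; however since $\nabla f(\bar x,\bar y)=0$ we have $C_{\max}(\bar x,\bar y)=\{h\mid \nabla_y f(\bar x,\bar y)h=0\}=\mathbb{R}^m$, so any $h$ works. Thus Theorem \ref{second-dual}(a) applies and $(\bar x,\bar y)$ is a calm local minimax point.

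For part (b): given a calm local minimax point $(\bar x,\bar y)$, Theorem \ref{second-dual}(b) applies (all its structural hypotheses being trivial here), yielding nonemptiness of $\Lambda_{\min}(\bar x,\bar y)$ and $\Lambda_{\max}(\bar x,\bar y)$ — which, as noted, is exactly $\nabla f(\bar x,\bar y)=0$ — together with: for all $h\in C_{\max}(\bar x,\bar y)=\mathbb{R}^m$ there is $\beta\in\Lambda_{\max}(\bar x,\bar y)=\{0\}$ with $h^T\nabla^2_{yy}L_{\max}(\bar y,0;\bar x)h+0\leq 0$, i.e.\ $h^T\nabla^2_{yy}f(\bar x,\bar y)h\leq 0$ for all $h$, which is $\nabla^2_{yy}f(\bar x,\bar y)\preceq 0$; and for all $u\in C_{\min}(\bar x,\bar y)=\mathbb{R}^n$ there is $h\in C_{\max}(\bar x,\bar y)=\mathbb{R}^m$ and $\alpha\in\Lambda_{\min}(\bar x,\bar y)=\{0\}$ with, for all $\beta\in\Lambda_{\max}(\bar x,\bar y)=\{0\}$, $\nabla^2_{(x,y)}L_{\min}(\bar x,\bar y,0,0)((u,h),(u,h))-0+0\geq 0$, i.e.\ $\nabla^2 f(\bar x,\bar y)((u,h),(u,h))\geq 0$, which is \eqref{uncons-nec}. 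I do not anticipate a serious obstacle; the only point requiring a little care is the bookkeeping that the multiplier sets collapse to $\{0\}$ precisely when the gradient vanishes, so that the statement ``$\Lambda$ nonempty'' in Theorem \ref{second-dual} correctly translates into ``$\nabla f(\bar x,\bar y)=0$'' and the Lagrangian Hessians reduce to Hessians of $f$; and the verification that the critical cones become the full spaces once $\nabla f(\bar x,\bar y)=0$, which is what allows the ``for some $h$'' in the hypothesis of (a) to be matched against $C_{\max}=\mathbb{R}^m$ in Theorem \ref{second-dual}(a).
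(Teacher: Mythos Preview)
Your proposal is correct and follows exactly the approach of the paper, which simply states that Theorem \ref{uncons} is an immediate corollary of Theorem \ref{second-dual}. Your detailed bookkeeping of how the critical cones, multiplier sets, Lagrangians, and sigma terms degenerate in the unconstrained case is precisely the verification the paper leaves implicit.
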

We can use the above necessary condition to verify that $(0,0)$ is not a calm local minimax point for the problem in Example \ref{ex3.1}, since
$\nabla f(0,0)=(0,0)$, $\nabla^2_{yy} f(0,0)= 0$ and
for any $u\not =0$ and any $h \in \mathbb{R}$,
$
\nabla^2 f(0,0)((u,h),(u,h)) =-2u^2<0.
$

By Corollary \ref{local-to-lip2}, when $X=\mathbb{R}^n, Y=\mathbb{R}^m$ and $(\bar x,\bar y)$ is a local minimax point with $\nabla^2_{yy} f(\bar x,\bar y)\prec 0$, we have that  $(\bar x,\bar y)$ is a calm local minimax point. Letting $\nabla_{yy}^2 f(\bar x,\bar y) \prec 0$ and $h^*:=-\nabla_{yy}^2 f(\bar x,\bar y)^{-1}\nabla_{xy}^2 f(\bar x,\bar y)^T u$, the result in \cite[Propositions 19 and 20]{JNJ20} can be recovered from Theorem \ref{uncons}.

\begin{corollary}\label{unconstrained-nec}
Let $(\bar x,\bar y)\in \mathbb{R}^n\times \mathbb{R}^m$.
\begin{itemize}
\item[(a)] Suppose that the following first-order and second-order sufficient conditions hold.
\begin{equation*}\label{}
{\nabla_x f(\bar x, \bar y)=\nabla_y f(\bar x, \bar y)=0}, \quad \nabla_{yy}^2 f(\bar x,\bar y) \prec 0,
   \end{equation*}
\begin{equation*}\label{}
[\nabla_{xx}^2 f-\nabla_{xy}^2 f(\nabla_{yy}^2 f)^{-1}\nabla_{yx}^2 f](\bar x,\bar y)  \succ 0.
\end{equation*}
Then $(\bar x,\bar y)$ is a (calm) local minimax point to the problem \eqref{unconstrained}.
\item[(b)] Let $(\bar x,\bar y)$ be a local minimax point for problem \eqref{unconstrained}. Suppose that $\nabla_{yy}^2 f(\bar x,\bar y) \prec 0$. Then
\begin{equation*}\label{}
[\nabla_{xx}^2 f-\nabla_{xy}^2 f(\nabla_{yy}^2 f)^{-1}\nabla_{yx}^2 f](\bar x,\bar y) \succeq 0.
\end{equation*}
\end{itemize}
\end{corollary}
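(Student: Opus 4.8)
The plan is to derive both parts directly from Theorem~\ref{uncons} (the unconstrained specialization of Theorem~\ref{second-dual}), the only analytic ingredient being the elementary Schur‑complement computation for the quadratic form appearing in the displayed conditions of that theorem. I would first fix the block decomposition
\[
\nabla^2 f(\bar x,\bar y)((u,h),(u,h)) = u^T \nabla_{xx}^2 f(\bar x,\bar y) u + 2 u^T \nabla_{xy}^2 f(\bar x,\bar y) h + h^T \nabla_{yy}^2 f(\bar x,\bar y) h .
\]
When $\nabla_{yy}^2 f(\bar x,\bar y)\prec 0$, for each fixed $u$ this is a strictly concave quadratic in $h$, maximized at $h^\ast(u):=-\nabla_{yy}^2 f(\bar x,\bar y)^{-1}\nabla_{yx}^2 f(\bar x,\bar y)u$ (using $\nabla_{xy}^2 f^T=\nabla_{yx}^2 f$), and a one‑line completion of squares gives
\[
\max_{h\in\mathbb{R}^m}\nabla^2 f(\bar x,\bar y)((u,h),(u,h)) = u^T[\nabla_{xx}^2 f-\nabla_{xy}^2 f(\nabla_{yy}^2 f)^{-1}\nabla_{yx}^2 f](\bar x,\bar y)\,u .
\]

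For part~(a), the hypotheses already supply $\nabla f(\bar x,\bar y)=0$ and $\nabla_{yy}^2 f(\bar x,\bar y)\prec 0$, so only the directional condition of Theorem~\ref{uncons}(a) remains to be verified. Given $u\in\mathbb{R}^n\setminus\{0\}$, I would take $h=h^\ast(u)$; by the identity above, $\nabla^2 f(\bar x,\bar y)((u,h),(u,h))$ equals the Schur‑complement quadratic form evaluated at $u$, which is positive by assumption. Theorem~\ref{uncons}(a) then yields that $(\bar x,\bar y)$ is a calm local minimax point, in particular a local minimax point.

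For part~(b), I would first upgrade ``local minimax point'' to ``calm local minimax point'': since $X=\mathbb{R}^n$, $Y=\mathbb{R}^m$ and $\nabla_{yy}^2 f(\bar x,\bar y)\prec 0$, Corollary~\ref{local-to-lip2} (as recorded in the paragraph preceding this corollary) guarantees exactly this. Now Theorem~\ref{uncons}(b) provides, for every $u\in\mathbb{R}^n$, some $h\in\mathbb{R}^m$ with $\nabla^2 f(\bar x,\bar y)((u,h),(u,h))\ge 0$; taking the supremum over $h$ and invoking the Schur‑complement identity from the first paragraph, I obtain $u^T[\nabla_{xx}^2 f-\nabla_{xy}^2 f(\nabla_{yy}^2 f)^{-1}\nabla_{yx}^2 f](\bar x,\bar y)u\ge 0$ for all $u$, i.e.\ the Schur complement is positive semidefinite.

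The argument is essentially mechanical; the one place that genuinely invokes a substantive result is the passage in part~(b) from a local minimax point to a calm local minimax point via Corollary~\ref{local-to-lip2}, which is precisely why the hypothesis $\nabla_{yy}^2 f(\bar x,\bar y)\prec 0$ is needed there (Theorem~\ref{uncons} is stated only for calm local minimax points). A minor secondary point is to justify the ``$\sup_h$'' step in part~(b), i.e.\ that $\nabla^2 f(\bar x,\bar y)((u,h),(u,h))\le u^T[\text{Schur complement}]u$ for every $h$, which again follows from concavity in $h$ when $\nabla_{yy}^2 f(\bar x,\bar y)\prec 0$.
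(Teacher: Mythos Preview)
Your proposal is correct and follows essentially the same approach as the paper: the paper's proof defines $h^*:=-\nabla_{yy}^2 f(\bar x,\bar y)^{-1}\nabla_{xy}^2 f(\bar x,\bar y)^T u$, invokes Theorem~\ref{uncons}, and refers to the Schur-complement computation carried out in the proof of Corollary~\ref{Thm3.1D}(ii), while the passage from local to calm local minimax via Corollary~\ref{local-to-lip2} is stated in the paragraph immediately preceding the corollary. Your write-up simply spells out these steps explicitly.
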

\begin{proof}
For each $u \in  \mathbb{R}^n \setminus \{0\}$, let $h^*:=-\nabla_{yy}^2 f(\bar x,\bar y)^{-1}\nabla_{xy}^2 f(\bar x,\bar y)^T u$. {With Theorem \ref{uncons}, the rest of the proof can be given similarly to the proof for Corollary \ref{Thm3.1D} (ii).}
\end{proof}

Note that  Corollary \ref{unconstrained-nec} (b)  is only applicable when $\nabla_{yy}^2 f(\bar x,\bar y) \prec 0$. This is restrictive. In contrast, Theorem \ref{uncons} (b) is applicable even when $\nabla_{yy}^2 f(\bar x,\bar y)$ is only positive semidefinite. The following example  from \cite[Example 3.21]{JC22}, shows that there exist minimax problems for which the optimality conditions in Theorem \ref{uncons}(b) are  applicable while  the one in Corollary \ref{unconstrained-nec} (b) is not.

\begin{example}\label{example=0}
Consider
$$
\min _{x \in \mathbb{R}} \max _{y \in \mathbb{R}} f(x, y):=-x^4+4 x^2 y^2-y^4 .
$$
We first exam that $(\bar x,\bar y) = (0,0)$ is a calm local minimax point, and thus is also a local minimax point.
Let $\tau(\delta)=\sqrt{2} \delta$ and $\delta_0=\frac{\sqrt{2}}{2}$. Then, for any $\delta \in\left(0, \delta_0\right]$ and any $(x, y) \in \mathbb{R}^2$ satisfying $|x| \leq \delta$ and $|y| \leq \delta$, we have
$$
-y^4=f(0, y) \leq f(0,0) \leq \max _{|y'| \leq \tau(\delta)} f\left(x, y^{\prime}\right)=3 x^4.
$$
Since $\nabla^2_{yy} f(\bar x, \bar y) = 0$, the optimality conditions in Corollary \ref{unconstrained-nec}(b) cannot be applied. However, we can easily verify that necessary optimality conditions in Theorem \ref{uncons}(b) holds at $(\bar x, \bar y)$ since
\begin{equation*}
\nabla^2 f(\bar x, \bar y) =
\begin{pmatrix}
0 & 0  \\
0 & 0
\end{pmatrix}.
\end{equation*}
Besides, it is easy to verify that $(0,0)$ is the unique global minimax point (by the definition) and the unique local minimax point (by using the first-order condition $\nabla f(x,y)=0$). However, $(0,0)$ is not a local Nash equilibrium since $f(0,0)=0 \geq f(x,0) = -x^4$.
\end{example}
When reducing to the unconstrained case, the second-order necessary and sufficient optimality conditions for local minimax points in \cite[Theorems 3.1 and 3.2]{DaiZh-2020} are the same as the result in Corollary \ref{unconstrained-nec}. We have shown in Example \ref{example=0} that our necessary optimality condition extends the result in Corollary \ref{unconstrained-nec} (a), and thus extends the result in \cite[Theorem 3.1]{DaiZh-2020}.

In \cite{JC22}, Jiang and Chen studied local minimax points and derived some necessary optimality conditions. We state their result \cite[Theorem 3.17]{JC22} for the unconstrained case so that we can compare it with Theorem \ref{uncons} (b).

\begin{proposition}[\text{\cite[Theorem 3.17]{JC22}}]\label{Theorem 3.17JC22}
 Let $(\bar x,\bar y)$ be a local minimax point for the unconstrained minimax problem \eqref{unconstrained} where $f$ be twice continuously differentiable. Then it holds that $\nabla f(\bar x,\bar y)=0, \nabla_{yy}^2 f(\bar{x}, \bar{y}) \preceq 0$, and
\begin{equation} \label{eqn5.9}u^T\nabla_{x x}^2 f(\bar{x}, \bar{y}) u\ \geq 0 \text { for all } u \in \mathrm{cl}\left\{\bar{u}\,|\, \exists \delta>0, \bar{u} \in C_{\min}(\bar x,y')\ \forall y^{\prime} \in \mathbb{B}_\delta(\bar{y})\right\},
\end{equation}
where
$$C_{\min}(\bar x,y'):= \{u\in \mathbb{R}^n \,|\,  \nabla_x f\left(\bar x, y'\right)u = 0\}.$$
\end{proposition}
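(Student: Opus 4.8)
The plan is to reduce everything to the value-function characterization of local minimaxity and then run elementary Taylor expansions. First I would invoke \cite[Lemma 16]{JNJ20} (or \cite[Proposition 3.9]{Zhang2022}): since $f$ is continuous, $(\bar x,\bar y)$ is a local minimax point of \eqref{unconstrained} if and only if (i) $\bar y$ is a local maximizer of $f(\bar x,\cdot)$ on $\mathbb{R}^m$, and (ii) $\bar x$ is a local minimizer of $V_\epsilon(x)=\max_{y\in\mathbb{B}_\epsilon(\bar y)}f(x,y)$ for all sufficiently small $\epsilon>0$. From (i), the classical first- and second-order necessary conditions for an unconstrained local maximum give $\nabla_y f(\bar x,\bar y)=0$ and $\nabla^2_{yy}f(\bar x,\bar y)\preceq 0$; moreover there is $\rho>0$ with $f(\bar x,y)\le f(\bar x,\bar y)$ on $\mathbb{B}_\rho(\bar y)$, so $V_\epsilon(\bar x)=f(\bar x,\bar y)$ once $\epsilon\le\rho$.

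To obtain $\nabla_x f(\bar x,\bar y)=0$ I would fix $\epsilon\in(0,\rho]$ and a direction $u$, and compare the lower bound $V_\epsilon(\bar x+tu)\ge V_\epsilon(\bar x)=f(\bar x,\bar y)$ (valid for small $t$ by (ii)) with the upper bound obtained from a maximizer $y_t\in\argmax_{y\in\mathbb{B}_\epsilon(\bar y)}f(\bar x+tu,y)$ and a first-order expansion in $x$: $f(\bar x+tu,y_t)\le f(\bar x,y_t)+t\,\nabla_x f(\bar x,y_t)^Tu+O(t^2)\le f(\bar x,\bar y)+t\,\nabla_x f(\bar x,y_t)^Tu+O(t^2)$, the $O(t^2)$ being uniform in $y_t\in\mathbb{B}_\epsilon(\bar y)$ since $f\in C^2$. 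Dividing by $t$, letting $t\downarrow 0$, and using $\|\nabla_x f(\bar x,y_t)-\nabla_x f(\bar x,\bar y)\|\le\omega(\epsilon):=\sup_{\|y-\bar y\|\le\epsilon}\|\nabla_x f(\bar x,y)-\nabla_x f(\bar x,\bar y)\|$ yields $0\le\nabla_x f(\bar x,\bar y)^Tu+\|u\|\,\omega(\epsilon)$; then $\epsilon\downarrow 0$ gives $\nabla_x f(\bar x,\bar y)^Tu\ge 0$ for every $u$, hence $\nabla_x f(\bar x,\bar y)=0$.

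For the curvature condition \eqref{eqn5.9}, I would take $u$ in $A:=\{\bar u\mid\exists\delta>0:\ \bar u\in C_{\min}(\bar x,y')\ \forall y'\in\mathbb{B}_\delta(\bar y)\}$, with associated radius $\delta_0$ (so $\nabla_x f(\bar x,y')^Tu=0$ on $\mathbb{B}_{\delta_0}(\bar y)$), fix $\epsilon\in(0,\min\{\rho,\delta_0\}]$, and for each $y'\in\mathbb{B}_\epsilon(\bar y)$ use the second-order expansion $f(\bar x+tu,y')=f(\bar x,y')+t\,\nabla_x f(\bar x,y')^Tu+\tfrac{t^2}{2}u^T\nabla^2_{xx}f(\bar x,y')u+o(t^2)$ (remainder uniform in $y'$), together with $\nabla_x f(\bar x,y')^Tu=0$ and $f(\bar x,y')\le f(\bar x,\bar y)$, to get $V_\epsilon(\bar x+tu)\le f(\bar x,\bar y)+\tfrac{t^2}{2}\max_{y'\in\mathbb{B}_\epsilon(\bar y)}u^T\nabla^2_{xx}f(\bar x,y')u+o(t^2)$. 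Comparing with $V_\epsilon(\bar x+tu)\ge f(\bar x,\bar y)$ and letting $t\downarrow 0$ gives $\max_{y'\in\mathbb{B}_\epsilon(\bar y)}u^T\nabla^2_{xx}f(\bar x,y')u\ge 0$ for every admissible $\epsilon$; letting $\epsilon\downarrow 0$ and using continuity of $y'\mapsto u^T\nabla^2_{xx}f(\bar x,y')u$ yields $u^T\nabla^2_{xx}f(\bar x,\bar y)u\ge 0$. Continuity of $u\mapsto u^T\nabla^2_{xx}f(\bar x,\bar y)u$ then extends the inequality from $A$ to $\mathrm{cl}\,A$, which is \eqref{eqn5.9}.

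The hard part will be bookkeeping rather than any genuine difficulty: one must keep the two limits nested ($t\downarrow 0$ for fixed $\epsilon$, then $\epsilon\downarrow 0$), control the Taylor remainders uniformly over the shrinking $y'$-balls (which $C^2$-smoothness on compacta guarantees), and remember that the neighborhood on which $\bar x$ minimizes $V_\epsilon$ may itself depend on $\epsilon$. A secondary subtlety is that $\bar y$ need not be the unique near-maximizer of $f(\bar x,\cdot)$, so the arguments must route through a generic maximizer $y_t$ (respectively over the whole ball $\mathbb{B}_\epsilon(\bar y)$) rather than through $\bar y$ alone.
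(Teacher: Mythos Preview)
The paper does not supply its own proof of this proposition: it is stated there purely as a citation of \cite[Theorem~3.17]{JC22} for purposes of comparison with the authors' own necessary condition \eqref{uncons-nec}. So there is no in-paper proof to compare your proposal against.

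That said, your argument is sound and is essentially the natural route to the result. You correctly reduce to the value-function characterization (Lemma~16 of \cite{JNJ20} / Proposition~3.9 of \cite{Zhang2022}), get $\nabla_y f=0$ and $\nabla_{yy}^2 f\preceq 0$ from the inner maximization, and then run the two-limit Taylor argument for the $x$-part. The key observation you identify---that membership of $u$ in the set $A$ kills the first-order term $\nabla_x f(\bar x,y')^Tu$ \emph{uniformly} for $y'$ in a small ball, so the second-order term survives in the upper bound for $V_\epsilon(\bar x+tu)$---is exactly what drives \eqref{eqn5.9}. Your bookkeeping remarks (uniform Taylor remainders on compacta via $C^2$, nesting $t\downarrow 0$ before $\epsilon\downarrow 0$, $\epsilon$-dependent minimizing neighborhood) cover the genuine technical points. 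One cosmetic note: in the $\nabla_x f=0$ step you wrote ``$\le$'' in the first Taylor expansion where an equality (plus $O(t^2)$) would be cleaner; the inequality only enters once you invoke $f(\bar x,y_t)\le f(\bar x,\bar y)$.
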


In general we can not directly show that our condition (\ref{uncons-nec}) is sharper than (\ref{eqn5.9}). However when the set
$\mathrm{cl}\left\{\bar{u}\,|\, \exists \delta>0, \bar{u} \in C_{\min}(\bar x,y')\ \forall y^{\prime} \in \mathbb{B}_\delta(\bar{y})\right\}$ in  (\ref{eqn5.9}) is equal to $\{0\}$, our condition  (\ref{uncons-nec}) is shaper.
 To be more specific,
consider the problem
$$\min_{x\in \mathbb{R}} \max_{y\in \mathbb{R}} -y^2+xy+x^3+x^4.$$
Using the first-order condition $\nabla f(\bar x, \bar y)=0$, there are three stationary points $(0,0)$, $(-\frac{1}{2},-\frac{1}{4})$, and $(-\frac{1}{4},-\frac{1}{8})$.
Using the second-order sufficient condition in Corollary \ref{unconstrained-nec} (a), we can identify that $(0,0)$ and $(-\frac{1}{2},-\frac{1}{4})$ are local minimax points.

Using the second-order necessary condition in Corollary \ref{unconstrained-nec} (b), we can identify that $(-\frac{1}{4},-\frac{1}{8})$ is not a calm local minimax point. The problem is a smooth nonconvex-strongly-concave minimax problem satisfying the conditions required in Corollary \ref{local-to-lip2}, and thus local minimax points are identical to calm local minimax points.

However, for $(\bar x,\bar y) = (-\frac{1}{4},-\frac{1}{8})$ and any $\delta >0, y' \in \mathbb{B}_{\delta}(\bar y)$ with $y' \not= \bar y$, we have $y' +  3 \bar x^2 + 4 \bar x^3 =y' + \frac{1}{8}\not=0$, and thus
$$C_{min}(\bar x,y')=\{u\in \mathbb{R} \, | \,(y' +  3 \bar x^2 + 4 \bar x^3)u=0\}=\{0\}.$$
Therefore,
$$\operatorname{cl}\{\bar u \,|\, \exists \delta >0, \bar u \in C_{min}(\bar x,y')\ \forall y' \in \mathbb{B}_{\delta}(\bar y)\} = \operatorname{cl}\{0\} = \{0\}, $$
which means that the second-order condition for a local minimax \eqref{eqn5.9} holds.
Therefore the second-order condition for a local minimax \eqref{eqn5.9} cannot rule out the possibility that  $(-\frac{1}{4},-\frac{1}{8})$ is not a local minimax point.

\end{document}